\pgfplotsset{compat=newest}
\crefname{equation}{}{}
\theoremstyle{thmstyleone}%
\newtheorem{theorem}{Theorem}
\newtheorem{proposition}[theorem]{Proposition}%
\newtheorem{corollary}[theorem]{Corollary}%
\theoremstyle{thmstyletwo}%
\newtheorem{lemma}{Lemma}%
\newtheorem{remark}{Remark}%
\theoremstyle{thmstylethree}%
\newcommand{\N}{\varmathbb{N}}
\newcommand{\R}{\varmathbb{R}}
\newcommand{\T}{\mathcal{T}}
\newcommand{\abs}[1]{\lvert#1\rvert}
\newcommand{\norm}[1]{\lVert#1\rVert}
\newcommand{\ltwonorm}[1]{\norm{#1}_{L^2(\Omega)}}
\newcommand{\ltwonormdo}[1]{\norm{#1}_{L^2(\partial \Omega)}}
\newcommand{\htwonorm}[1]{\norm{#1}_{H^2(\Omega)}}
\newcommand{\lpnorm}[1]{\norm{#1}_{L^p(\Omega)}}
\newcommand{\ltwonormk}[1]{\norm{#1}_{L^2(K)}}
\newcommand{\linfnorm}[1]{\norm{#1}_{L^\infty(\Omega)}}
\newcommand{\into}{\int_\Omega}
\newcommand{\intdo}{\int_{\partial \Omega}}
\newcommand{\half}{\frac{1}{2}}
\newcommand{\nhalf}{\nicefrac{1}{2}}
\newcommand{\dist}{\operatorname{dist}}
\newcommand{\Qad}{{Q_\mathrm{ad}}}
\newcommand{\Qadh}{{Q_{\mathrm{ad},h}}}
\newcommand{\oq}{\bar q}
\newcommand{\ou}{\bar u}
\newcommand{\oz}{\bar z}
\newcommand{\dq}{\delta\mspace{-2mu}q}
\newcommand{\du}{\delta\mspace{-2mu}u}
\newcommand{\lh}{\abs{\ln h}}
\newcommand{\Lap}{\upDelta}
\renewcommand{\phi}{\varphi}
\newcommand{\Om}{\Omega}
\newcommand{\logLogSlopeTriangle}[6]
{

  \pgfplotsextra
  {
    \pgfkeysgetvalue{/pgfplots/xmin}{\xmin}
    \pgfkeysgetvalue{/pgfplots/xmax}{\xmax}
    \pgfkeysgetvalue{/pgfplots/ymin}{\ymin}
    \pgfkeysgetvalue{/pgfplots/ymax}{\ymax}

    \pgfmathsetmacro{\xArel}{#1}
    \pgfmathsetmacro{\yArel}{#3}
    \pgfmathsetmacro{\xBrel}{#1-#2}
    \pgfmathsetmacro{\yBrel}{\yArel}
    \pgfmathsetmacro{\xCrel}{\xArel}

    \pgfmathsetmacro{\lnxB}{\xmin*(1-(#1-#2))+\xmax*(#1-#2)} 
    \pgfmathsetmacro{\lnxA}{\xmin*(1-#1)+\xmax*#1} 
    \pgfmathsetmacro{\lnyA}{\ymin*(1-#3)+\ymax*#3} 
    \pgfmathsetmacro{\lnyC}{\lnyA+#4*(\lnxA-\lnxB)}
    \pgfmathsetmacro{\yCrel}{\lnyC-\ymin)/(\ymax-\ymin)} 

    \coordinate (A) at (rel axis cs:\xArel,\yArel);
    \coordinate (B) at (rel axis cs:\xBrel,\yBrel);
    \coordinate (C) at (rel axis cs:\xCrel,\yCrel);

    \draw[#5]   (A)-- 
    (B)-- 
    (C)-- node[pos=0.5,anchor=west] {$#6$}
    cycle;
  }
}
\begin{document}

\title[Dirichlet Optimal Control on Convex Domains]{Numerical Analysis for Dirichlet Optimal Control Problems on Convex Polyhedral Domains}


\author[1]{\fnm{Johannes} \sur{Pfefferer}}\email{johannes.pfefferer@unibw.de}

\author*[2]{\fnm{Boris} \sur{Vexler}}\email{vexler@tum.de}

\affil[1]{\orgdiv{Fakult\"at f\"ur Elektrische Energiesysteme und Informationstechnik}, \orgname{Universit\"at der Bundeswehr M\"unchen}, \orgaddress{\street{Werner-Heisenberg-Weg 39}, \city{Neubiberg}, \postcode{85579}, \country{Germany}}}
\affil[2]{\orgdiv{Department of Mathematics}, \orgname{Technical University of Munich, School of Computation, Information and Technology}, \orgaddress{\street{Boltzmannstr. 3}, \city{Garching b. Munich}, \postcode{85748}, \country{Germany}}}


\abstract{In this paper error analysis for finite element discretizations of  Dirichlet boundary control problems is developed. For the first time, optimal discretization error estimates are established in the case of three dimensional polyhedral and convex domains. The convergence rates solely depend on the size of largest interior edge angle. These results are comparable to those for the two dimensional case. However, the approaches from the two dimensional setting are not directly extendable such that new techniques have to be used. The theoretical results are confirmed by numerical experiments.}

\keywords{Optimal control, Dirichlet boundary control, convex polyhedral domains, finite element method, discretization error estimates,}


\pacs[MSC Classification]{35J05, 49J20, 49M25, 65N15, 65M30}

\maketitle

\section{Introduction}\label{sec1}

In this paper we consider the following optimal control problem with the control entering the Dirichlet boundary condition of a linear elliptic equation: 
\begin{subequations}\label{DirichletCon:eq:problem}
	\begin{equation}\label{DirichletCon:eq:cost}
		\text{Minimize } J(q,u) = \half\ltwonorm{u-u_d}^2 + \frac{\alpha}{2}\ltwonormdo{q}^2
	\end{equation}
	over control $q$ and state $u$ fulfilling the state equation
	\begin{equation}\label{DirichletCon:eq:state}
		\begin{aligned}
			-\Lap u &= 0 &\quad&\text{in } \Omega,\\
			u &=q &\quad&\text{on } \partial \Omega
		\end{aligned}
	\end{equation}
and pointwise control constraints
\begin{equation}\label{DirichletCon:eq:constraints}
	q_a \le q(s) \le q_b \quad \text{for almost all } s \in \partial \Omega.
\end{equation}
\end{subequations}
The precise functional analytic setting is discussed below.

Such problems are often referred as \textit{Dirichlet control problems}. The inherent difficulty in the mathematical treatment of such problems, compared to distributed (control on the right-hand side) or Neumann boundary control, is the fact that the control does not directly enter the standard variational formulation of the state equation.
There are several contributions to the analysis and numerical analysis of such problems. In the majority of the publications on numerical analysis of such problems the computational domain $\Omega$ is taken as a two-dimensional polygonal domain, see, e.g., \cite{CasasRaymond:2006,MayRannacherVexler:2013}, where $\Omega$ is in addition assumed to be convex. In \cite{ApelMateosPfeffererRoesch:2015,ApelMateosPfeffererRoesch:2018} also non-convex polygonal domains are treated. In \cite{DeckelnichGuentherHinze:2009} Dirichlet control problems on smooth two- and three-dimensional domains are considered. The goal of our paper is to provide an a priori error analysis for the finite element discretization of \eqref{DirichletCon:eq:problem} in the case of three-dimensional convex polyhedral domains. A direct extension of the approaches from literature treating the two-dimensional case does not seem to be possible and thus, new techniques have to be developed. The main tools we use are estimates in weighted spaces, where the weight is a (smoothed) regularized distance to the boundary. The main contributions of the paper are the following:
 \begin{itemize}
 	\item \textit{Normal trace theorem on convex polyhedral domains.} It is well-known, that for a smooth domain $\Omega$ the normal trace $\partial_n v$ of a function $v$ in the Sobolev space $W^{2,p}(\Omega)$ possesses the regularity $W^{1-\frac{1}{p},p}(\partial \Omega)$ for every $1<p<\infty$. For a polyhedral domain this is in general not true, since the normal direction $n$ is not continuous. We check, using a general theory from \cite{MazyaMitreaShaposhnikova:2010}, that if $v \in W^{2,p}(\Omega)\cap W^{1,p}_0(\Omega)$, then $\partial_n v \in W^{1-\frac{1}{p},p}(\partial \Omega)$ for any (convex) polyhedral domain, see \cref{PDE:theorem:normaltrace_estimate} below. Such a result is available for two dimensional domains, see \cite[Lemma A.2]{CasasMateosRaymond:2009}. For three dimensional domains we could not locate it in the literature and thus provide a proof. This trace estimates is required in order to obtain the precise regularity of the (very weak) solution of the state equation \eqref{DirichletCon:eq:state}.
 	\item \textit{Weighted $H^1$ regularity of the discrete harmonic extension.} On the continuous level the very weak solution $u$ of the state equation \eqref{DirichletCon:eq:state} for a given control $q \in L^2(\partial \Omega)$ possesses the weighted regularity
 	\[
 	\ltwonorm{\rho^\half \nabla u} \le c \ltwonormdo{q},
 	\]
 	where $\rho(x)$ is the distance function to the boundary $\partial \Omega$, see the discussion and references below. We provide a corresponding result on the discrete level, i.e., for the finite element solution $u_h$ of the corresponding discretized equation we prove
 	\begin{equation}\label{eq:weightedH1_discrete_apriori}
 	 \ltwonorm{\tilde \rho^\half \nabla u_h} \le c \ltwonormdo{P_h^\partial q},
 	\end{equation}
 	where $\tilde \rho$ is a regularized distance function to the boundary, see \eqref{eq:tilde_rho} below, and $P_h^\partial$ is the $L^2$ projection on the boundary. This result is given in \cref{theorem:discrete_state_stab}. We use it in our error analysis for the optimal control problem. However, we think it is of an independent interest.
 	\item \textit{Error estimates for the Dirichlet control problem.} We provide a priori error estimates for the error between the optimal solutions $\oq$  of \eqref{DirichletCon:eq:problem} and $\oq_h$ 
 	 of corresponding discretized problems. More precisely, we will establish error estimates for two different types of discretization concepts, the concept of variational discretization and the concept of piecewise linear and continuous discretization, see \cref{theorem:q_error_est_var} and \cref{theorem:q_error_est_p1}. Under the regularity assumption $u_d \in H^1(\Omega)$ on the desired state we prove the estimate
 	\begin{equation}\label{into:main_res}
 	\ltwonormdo{\oq-\oq_h} \le ch^{\half + s}\norm{u_d}_{H^1(\Omega)}
 	\end{equation}
 	 for every 
 	 \[
 	 	s<\min\left(\frac{\pi}{\omega_\Omega}-1,\frac{1}{2}\right),
 	 \]
 	 where $\omega_\Omega<\pi$ is the largest interior edge angle (a precise definition is given below). For $\omega_\Omega<\frac23\pi$ (which is the limiting angle in the above inequality) we even show first order convergence (up to a $\log h$-factor). Please note that the convergence rates only depend on the geometry of the domain. Even more, only edge openings (and not the geometry of vertices) play a role for the convergence rates. 
 	 These results are comparable with those for the two-dimensional case from \cite{ApelMateosPfeffererRoesch:2018} (under slightly different assumptions on $u_d$). For the three-dimensional case the convergence results are absolutely new. We also note that a corresponding result for convex polygonal (i.e. two-dimensional) domains can be shown by the techniques of the present paper as well. In this regard, our new approach seems to be more flexible.
 \end{itemize}

The structure of the paper is as follows: In \cref{sec2} we collect and prove different trace estimates and regularity results for the solutions to the state equations, which are needed later on. \cref{sec3} is concerned with the discussion of the infinite dimensional optimal control problem, the derivation of corresponding optimality conditions and the elaboration of regularity results for the optimal solution. In \cref{sec4} the discretization of the state equation and related error estimates are considered in detail. As main result of this section we prove the weighted a priori estimate \cref{eq:weightedH1_discrete_apriori} for the discrete solution of the state equation. The numerical analysis for two different types of discretization concepts applied to the optimal control problem (variational discretization and piecewise linear and continuous discretization) is contained in \cref{sec5}. This paper ends with numerical examples in \cref{sec6}, which illustrate the theoretical findings of Section 5, and an acknowledgment in \cref{sec7}.

\section{Trace estimates and regularity results for the state equation}\label{sec2}
Throughout the paper the domain $\Omega \subset \R^3$ is assumed to be polyhedral and convex. The set of faces is denoted by ${\cal F}(\Omega)$, the set of all edges by ${\cal E}(\Omega)$, and the set of all vertices by ${\cal V}(\Omega)$. For every edge $e\in {\cal E}(\Omega)$ we denote by $\omega_e$ the the interior angle at~$e$. We define
\begin{equation}\label{eq:omega_Omega}
\omega_\Omega = \max_{e \in {\cal E}(\Omega)} w_e
\end{equation}
and have $\omega_\Omega < \pi$ by the convexity of $\Omega$. Moreover, we will use the critical exponent $\lambda_\Omega>1$ defined as
\begin{equation}\label{eq:lambda_Omega}
\lambda_\Omega = \frac{\pi}{\omega_\Omega}.
\end{equation}

We use the standard notation for the Lebesgue $L^p(\Omega)$ and Sobolev spaces $W^{k,p}(\Omega)$ as well as the corresponding fractional spaces $W^{s,p}(\Omega)$ equipped with Sobolev-Slobodeckij norms, see, e.g., \cite[Chapter 1.3]{Grisvard:1985} and \cite[Chapter 5]{Adams:2003} for details. We will use also weighted Sobolev spaces with a weight being a power of the distance
to the boundary $\partial \Omega$. The function $x \mapsto \dist(x,\partial \Omega)$ is Lipschitz continuous with the Lipschitz constant $L=1$, but it is in general not in $W^{2,\infty}(\Omega)$, as the boundary $\partial \Omega$ is polyhedral. For the definition of the weighted spaces this Lipschitz continuity is sufficient, but for weighted superconvergence estimates (see below) we require a smoothed weight. By \cite[Theorem 2, page 171]{Stein:1970} there exists a function $\rho \colon \bar \Omega\to [0,\infty)$ with $\rho \in C^\infty(\Omega)$ fulfilling 
\[
c_1 \rho(x) \le \dist(x,\partial \Omega) \le c_2 \rho(x) \quad \text{for all } x \in \bar \Omega,
\]
with some constants $c_1$ and $c_2$ as well as
\[
\abs{D^\alpha \rho(x)} \le c \rho(x)^{1-\abs{\alpha}} \quad \text{for all } x \in \Omega,
\]
for every multi-index $\alpha \in \N_0^3$. 

For $m \in \N_0$, $1<p<\infty$, and $s\in\R$ the space $W^{m,p}_s(\Omega)$ is the space of functions with the finite norm
\begin{equation}\label{PDE:eq:norm_weighted_w_space}
	\norm{v}_{W^{m,p}_s(\Omega)}^p = \sum_{\abs{\alpha}\le m} \into \abs{D^\alpha v(x)}^p \rho(x)^{sp}\, dx,
\end{equation}
see \cite{MazyaMitreaShaposhnikova:2010} for details. For $s=0$ this space obviously coincides  with $W^{m,p}(\Omega)$.

We will use the following Hardy type inequalities.

\begin{proposition}\label{theorem:HardyFractional}
	Let $0<s<\half$. There is a constant $c>0$ independent of $v$ such that the inequality
	\[
	\ltwonorm{\rho^{-s} v} \le c \norm{v}_{H^s(\Omega)}
	\]
	holds for all $v \in H^s(\Omega)$.
\end{proposition}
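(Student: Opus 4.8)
The plan is to obtain the inequality by interpolation, using as endpoints the trivial estimate at $s=0$ and the classical Hardy inequality ``at $s=1$''. Throughout I exploit the equivalence $c_1\rho(x)\le\dist(x,\partial\Om)\le c_2\rho(x)$, which gives $\rho^{-\sigma}\le c_2^{\sigma}\dist^{-\sigma}$ for $\sigma\ge 0$, so that the weight $\rho$ may be freely replaced by $\dist(\cdot,\partial\Om)$ up to constants. In the notation of \eqref{PDE:eq:norm_weighted_w_space} the target estimate reads $\norm{v}_{W^{0,2}_{-s}(\Om)}\le c\,\norm{v}_{H^s(\Om)}$, so it suffices to prove boundedness of the identity map $\iota\colon v\mapsto v$ from $H^s(\Om)$ into the weighted space $W^{0,2}_{-s}(\Om)$.

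First endpoint ($\sigma=0$): trivially $\iota\colon L^2(\Om)=W^{0,2}_{0}(\Om)\to W^{0,2}_{0}(\Om)$ is bounded, being the identity. Second endpoint ($\sigma=-1$): since $\Om$ is convex it is in particular a bounded Lipschitz domain, and the classical Hardy inequality
\[
\into \frac{\abs{v(x)}^2}{\dist(x,\partial\Om)^2}\,dx \le c\,\ltwonorm{\nabla v}^2 \quad\text{for all } v\in H^1_0(\Om)
\]
holds. Combined with $\rho^{-1}\le c_2\dist^{-1}$ this yields $\norm{v}_{W^{0,2}_{-1}(\Om)}\le c\,\honenorm{v}$, i.e.\ $\iota\colon H^1_0(\Om)\to W^{0,2}_{-1}(\Om)$ is bounded.

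It remains to interpolate. The couple $(W^{0,2}_{0}(\Om),W^{0,2}_{-1}(\Om))$ consists of weighted $L^2$-spaces, for which complex interpolation reproduces the geometric mean of the weights; with parameter $\theta=s$ one gets $[W^{0,2}_{0}(\Om),W^{0,2}_{-1}(\Om)]_s=W^{0,2}_{-s}(\Om)$. On the other side, $[L^2(\Om),H^1_0(\Om)]_s=H^s_0(\Om)$, and here the hypothesis $0<s<\half$ enters decisively: in this subcritical range $H^s_0(\Om)=H^s(\Om)$ with equivalent norms (no trace is defined), whereas the identification fails at $s=\half$. Since $\iota$ is bounded on both endpoints, interpolation of operators gives a bounded map $\iota\colon H^s(\Om)=H^s_0(\Om)\to W^{0,2}_{-s}(\Om)$, which is exactly the claimed estimate $\ltwonorm{\rho^{-s}v}\le c\,\norm{v}_{H^s(\Om)}$.

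The conceptual crux --- and the only place where $s<\half$ is genuinely needed --- is the identification $[L^2,H^1_0]_s=H^s$ in the subcritical range; the remaining ingredients (the classical Hardy inequality on Lipschitz domains and the interpolation of weighted $L^2$-spaces) are standard. As a self-contained alternative one could avoid interpolation and argue directly: localize $v$ by a finite partition of unity subordinate to a covering of $\bar\Om$ by an interior patch and boundary patches, flatten each boundary patch by a bi-Lipschitz map (legitimate since the convex polyhedron is Lipschitz, its dihedral edge angles being $<\pi$), and reduce to the one-dimensional fractional Hardy inequality $\int_0^\infty t^{-2s}\abs{f(t)}^2\,dt\le c\int_0^\infty\!\!\int_0^\infty \abs{f(t)-f(\tau)}^2\abs{t-\tau}^{-1-2s}\,dt\,d\tau$, whose validity for all $f$ (without any boundary condition) again rests on $s<\half$. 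I would expect the main technical nuisance of this second route to be the bookkeeping at edges and vertices, which the interpolation argument sidesteps entirely.
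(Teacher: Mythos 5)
Your proposal is correct in substance but takes a genuinely different route from the paper: the paper does not prove the inequality at all, it simply cites it as a known fractional Hardy inequality (Grisvard, Theorem 1.4.4.3; Dyda; Chen--Song), whereas you derive it by interpolating the identity map between the trivial endpoint $L^2(\Omega)\to L^2(\Omega)$ and the classical Hardy endpoint $H^1_0(\Omega)\to W^{0,2}_{-1}(\Omega)$, using Stein--Weiss interpolation of weighted $L^2$ spaces on the target side and the identification $[L^2(\Omega),H^1_0(\Omega)]_s=H^s_0(\Omega)=H^s(\Omega)$, $0<s<\half$, on the source side. The structure is sound and you correctly isolate where $s<\half$ enters, but two caveats should be recorded. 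First, the source-side identification is the hard direction ($H^s(\Omega)\hookrightarrow[L^2,H^1_0]_s$) and is a theorem of at least the same depth as the inequality being proved: for smooth domains it is Lions--Magenes, and for a Lipschitz (convex polyhedral) domain one needs both that $\{\tilde H^s(\Omega)\}$ is an interpolation scale and that $\tilde H^s(\Omega)=H^s(\Omega)$ for $0<s<\half$; moreover, standard proofs of the latter (equivalently, that multiplication by $\chi_\Omega$ is bounded on $H^s$ for $\abs{s}<\half$) themselves often invoke a fractional Hardy inequality, so to avoid circularity you must cite a proof that does not --- such proofs exist, but the citation has to be chosen with care. Second, your sketched ``self-contained alternative'' (localization, bi-Lipschitz flattening, one-dimensional fractional Hardy without boundary conditions) is essentially the argument behind the references the paper cites, so that route reproduces the paper's proof rather than departing from it. What your interpolation argument buys is a clean reduction to two named classical theorems and a transparent explanation of the threshold $s<\half$; what the paper's citation buys is brevity and immunity to the circularity issue, since the cited results are proved directly.
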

\begin{proof}
	We refer to \cite[Theorem 1.4.4.3]{Grisvard:1985}, see also \cite[(17)]{Dyda:2004} and \cite[Corollary 2.4]{ChenSong:2003}.
\end{proof}

\begin{proposition}\label{theorem:HardyType_s}
	Let $1<p<\infty$, $0<s <\frac{1}{p}$. There is a constant $c>0$ only depending on $\Omega$ such that the following inequality holds
	\[
	\lpnorm{\rho^{-s} v} \le \frac{cp}{1-sp} \left(\lpnorm{\rho^{1-s}v}+ \lpnorm{\rho^{1-s}\nabla v}\right)
	\]
	for all $v \in W^{1,p}_{1-s}(\Omega)$.
\end{proposition}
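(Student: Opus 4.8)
The plan is to reduce the multi-dimensional estimate to a one-dimensional weighted Hardy inequality by localizing near $\partial\Omega$. Since $\Omega$ is a convex polyhedron it is in particular a bounded Lipschitz domain, so $\partial\Omega$ can be covered by finitely many charts in each of which, after a rigid motion, the boundary is the graph $x_3=\phi(x')$ of a Lipschitz function and $\Omega$ lies locally below it. In such a chart the transversal height $t=x_3-\phi(x')$ is comparable to $\dist(x,\partial\Omega)$, and hence to $\rho(x)$ by the given equivalence $c_1\rho\le \dist(\cdot,\partial\Omega)\le c_2\rho$ with constants depending only on $\Omega$; thus every power $\rho^\beta$ may be replaced by $t^\beta$ up to a fixed geometric factor. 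Throughout I would abbreviate $\gamma=1-sp\in(0,1)$ and set $g=\abs{v}\ge 0$, using $\abs{\nabla g}\le\abs{\nabla v}$ a.e. It suffices to prove the estimate for $v\in C^\infty(\bar\Omega)$ and then pass to general $v\in W^{1,p}_{1-s}(\Omega)$ by a standard density/truncation argument, applying Fatou's lemma on the left-hand side.

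The engine is the elementary one-dimensional inequality: for $0<\gamma<1$ and $h\in C^1([0,\ell])$, $h\ge 0$,
\[
\int_0^\ell h(t)^p\,t^{\gamma-1}\,dt \;\le\; \frac{1}{\gamma}\,h(\ell)^p\,\ell^{\gamma} \;+\; \frac{p}{\gamma}\int_0^\ell h(t)^{p-1}\abs{h'(t)}\,t^{\gamma}\,dt ,
\]
which I would obtain by writing $t^{\gamma-1}=\tfrac1\gamma (t^{\gamma})'$ and integrating by parts: the endpoint $t=0$ contributes nothing because $\gamma>0$, while the endpoint $t=\ell$ produces the first, \emph{interior}, term. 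This is exactly where the factor $\tfrac1\gamma=\tfrac1{1-sp}$ is generated. A Hölder split of the last integral with exponents $p'$ and $p$, namely $h^{p-1}\abs{h'}t^{\gamma}=\bigl(h^{p-1}t^{(\gamma-1)/p'}\bigr)\bigl(\abs{h'}t^{(\gamma-1+p)/p}\bigr)$, and the identity $\gamma-1+p=(1-s)p$ then bound it by $\bigl(\int_0^\ell h^p t^{\gamma-1}\bigr)^{1/p'}\bigl(\int_0^\ell \abs{h'}^p t^{(1-s)p}\bigr)^{1/p}$, producing precisely the two weights appearing on the right-hand side of the proposition.

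Applying this along the transversal rays $t\mapsto v(x',t)$ in each chart, integrating over the chart variable $x'$ against the (bounded, bi-Lipschitz) surface measure, and summing over the finite cover with a subordinate partition of unity, I would then replace $t$-powers by $\rho$-powers and $\abs{\partial_t v}$ by $\abs{\nabla v}$ at the cost of fixed geometric constants. The gradient contribution assembles into $\lpnorm{\rho^{1-s}\nabla v}$, while the endpoint terms $h(\ell)^p$ and the far-interior region $\{\dist(\cdot,\partial\Omega)\ge\delta_0\}$ (with $\delta_0$ depending only on $\Omega$) all live where $\rho$ is bounded below and are therefore dominated by $\lpnorm{\rho^{1-s}v}$. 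This yields
\[
\lpnorm{\rho^{-s}v}^p \;\le\; \frac{c}{\gamma}\,\lpnorm{\rho^{1-s}v}^p \;+\; \frac{cp}{\gamma}\,\lpnorm{\rho^{-s}v}^{p-1}\lpnorm{\rho^{1-s}\nabla v},
\]
after which a Young inequality absorbs $\tfrac12\lpnorm{\rho^{-s}v}^p$ from the last term and taking $p$-th roots gives the claim with constant $\tfrac{cp}{1-sp}$.

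The main obstacle I anticipate is the bookkeeping of the localization rather than any deep estimate: one must verify that the transversal height is \emph{uniformly} comparable to $\dist(\cdot,\partial\Omega)$ across all charts, including those straddling edges and vertices where the Lipschitz constants are larger but still finite, and check that differentiating the partition of unity only creates terms controlled by $\lpnorm{\rho^{1-s}v}$. Note that convexity could alternatively be exploited to set up \emph{global} inward-normal coordinates with a monotone Jacobian; however, the localization above needs only the Lipschitz property and is more robust, so I would carry it out that way.
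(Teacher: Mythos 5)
Your proposal is correct in substance, but it takes a genuinely different route from the paper: the paper does not prove this inequality at all, it simply cites N\v{e}cas (Theorem 2.4, p.~293 of the 2012 edition) and remarks that the dependence of the constant on $s$ and $p$ "can be tracked from the proof". What you have written out is, in effect, that underlying classical proof: localization to Lipschitz graph charts, the one-dimensional integration by parts $t^{\gamma-1}=\tfrac1\gamma(t^\gamma)'$ with $\gamma=1-sp$ (which is exactly where the factor $\tfrac1{1-sp}$ comes from), the H\"older split producing the weight $t^{(1-s)p}$ on the gradient, and absorption by Young's inequality. This buys transparency that the paper outsources: the constant $\tfrac{cp}{1-sp}$, which is essential later (the paper sends $s\to\tfrac12$ with $p=2$ in \cref{cor:dj-djh} and pays exactly a $\lh$ factor), is visible line by line in your argument, whereas the reader of the paper must open N\v{e}cas and track it. The costs are the bookkeeping items you partly acknowledge: (i) a sign slip, the transversal height should be $t=\phi(x')-x_3$ if $\Omega$ lies below the graph; (ii) after integrating in $x'$, the endpoint term $h(\ell)^p\ell^\gamma$ is a codimension-one (surface) integral, so it is not literally dominated by the volume norm $\lpnorm{\rho^{1-s}v}$ --- either average over $\ell$ in an interior slab, or (cleaner) apply the 1D inequality to $h=\eta\abs{v}$ with a cutoff $\eta$ vanishing at $t=\ell$, so the endpoint term disappears and only $\abs{\nabla\eta}\,\abs{v}$-terms supported where $\rho$ is bounded below remain; (iii) the density of smooth functions in $W^{1,p}_{1-s}(\Omega)$ deserves a word --- it is true here because the weight exponent $(1-s)p$ is positive (bounded weight), and for a convex $\Omega$ an inward dilation plus mollification gives it directly, which is also a place where convexity simplifies your "robust Lipschitz" setup. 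None of these affects the validity of the approach.
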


\begin{proof}
	The estimate is given in \cite[Theorem 2.4, page 293]{Necas:2012}. The dependence of the constant on $s$ and $p$ can be tracked from the proof.
\end{proof}

 For a face $F \in \mathcal{F}(\partial \Omega)$ we will also use standard notion of Sobolev spaces $W^{s,p}(F)$ for $s\in \R$ and $1\le p \le \infty$. For $0\le s\le 1$ the definition of these spaces can be directly extended to the whole boundary leading to the spaces  $W^{s,p}(\partial \Omega)$. The space of functions with face-wise regularity is defined as
\begin{equation}\label{PDE:eq:defSobolev_pw}
	W^{s,p}_{\mathrm{pw}}(\partial \Omega) = \Set{v\in L^p(\partial \Omega) | v|_F \in W^{s,p}(F) \text{ for all } F \in {\mathcal F}(\partial \Omega)}.
\end{equation}

We will use standard trace theorems, which provide continuity for the trace operator $\tau \colon W^{1,p}(\Omega) \to W^{1-\frac{1}{p},p}(\partial \Omega)$, see, e.g., \cite[Theorem 1.5.1.3]{Grisvard:1985}. The question on regularity of the normal flux $\partial_n v$ for $v \in W^{2,p}(\Omega)$ is more involved. For smooth domains the regularity $\partial_n v \in W^{1-\frac{1}{p},p}(\partial \Omega)$ is well known. On polyhedral domains such a regularity can not be expected in general, since the normal direction $n$ is discontinuous. However, we require such regularity only for functions with zero Dirichlet trace, i.e. from $W^{2,p}(\Omega)\cap W^{1,p}_0(\Omega)$. Also for $v$ from this space the desired regularity can not be expected for a general Lipschitz domain. Below, we provide a corresponding regularity result for polyhedral domains. To this end we first introduce an additional space for functions on the boundary $\partial \Omega$ in order to precisely capture the behavior of the normal trace $\partial_n v$ of a function $v$ from a Sobolev space on $\Omega$ with a zero trace. For $0<s<1$ and $1<p<\infty$ we define
\begin{equation}\label{PDE:eq:def:Nsp}
	N^{s,p}(\partial \Omega) = \Set{g \in L^p(\partial \Omega) | g\, n_i \in W^{s,p}(\partial \Omega), \, 1 \le i \le 3},
\end{equation}
where $n_i$ are the components of the normal vector $n$. The norm on this space is defined as
\[
\norm{g}_{N^{s,p}(\partial \Omega)}^p = \sum_{i=1}^N \norm{g\, n_i}^p_{W^{s,p}(\partial \Omega)}.
\]
The following proposition describes the behavior of the normal trace on $W^{2,p}_s(\Omega)\cap W^{1,p}_0(\Omega)$, see  \eqref{PDE:eq:norm_weighted_w_space} for the definition of this weighted space.
\begin{proposition}\label{PDE:theorem:normaltrace:Nsp}
	Let $1<p<\infty$ and $-\frac{1}{p}<s<1-\frac{1}{p}$. The image of the mapping $\gamma_1 \colon v \mapsto \partial_n
	v$ considered on $W^{2,p}_s(\Omega)\cap W^{1,p}_0(\Omega)$ is the space $N^{1-s-\nicefrac{1}{p},p}(\partial \Omega)$. The operator $\gamma_1 \colon W^{2,p}_s(\Omega)\cap W^{1,p}_0(\Omega) \to N^{1-s-\nicefrac{1}{p},p}(\partial \Omega)$ is continuous.
\end{proposition}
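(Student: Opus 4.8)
\emph{Strategy.} The plan is to reduce the statement to a weighted first–order trace theorem applied to the components of $\nabla v$, combined with the observation that the tangential part of the gradient vanishes for functions with zero Dirichlet trace. The central analytic input is the weighted trace theorem from \cite{MazyaMitreaShaposhnikova:2010}: for $-\nicefrac{1}{p}<s<1-\nicefrac{1}{p}$ the trace operator $\tau\colon W^{1,p}_s(\Omega)\to W^{1-s-\nicefrac{1}{p},p}(\partial\Omega)$ is continuous, surjective, and admits a bounded linear right inverse. The admissible range of $s$ stated in the proposition is exactly the one for which the boundary smoothness index $1-s-\nicefrac{1}{p}\in(0,1)$ is meaningful, so this is consistent.

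\emph{Image and continuity.} For $v\in W^{2,p}_s(\Omega)\cap W^{1,p}_0(\Omega)$ each partial derivative $\partial_i v$ belongs to $W^{1,p}_s(\Omega)$, with $\norm{\partial_i v}_{W^{1,p}_s(\Omega)}\le \norm{v}_{W^{2,p}_s(\Omega)}$ directly from \eqref{PDE:eq:norm_weighted_w_space}. By the weighted trace theorem, $\tau(\partial_i v)\in W^{1-s-\nicefrac{1}{p},p}(\partial\Omega)$ with a corresponding norm bound. On each face $F\in\mathcal F(\partial\Omega)$ the trace of $v$ vanishes, hence the tangential derivatives of $v$ vanish on $F$ and $\tau(\partial_i v)=(\partial_n v)\,n_i$ a.e.\ on $F$; since this holds on every face, it holds a.e.\ on $\partial\Omega$. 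Therefore each product $(\partial_n v)\,n_i$ lies in $W^{1-s-\nicefrac{1}{p},p}(\partial\Omega)$, which is precisely the defining condition \eqref{PDE:eq:def:Nsp} for $\partial_n v\in N^{1-s-\nicefrac{1}{p},p}(\partial\Omega)$. Summing the three componentwise estimates yields $\norm{\partial_n v}_{N^{1-s-\nicefrac{1}{p},p}(\partial\Omega)}\le c\,\norm{v}_{W^{2,p}_s(\Omega)}$, i.e.\ the continuity of $\gamma_1$.

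\emph{Surjectivity.} Given $g\in N^{1-s-\nicefrac{1}{p},p}(\partial\Omega)$, I would construct a preimage by a normal collar expansion: using the regularized distance $\rho$ and the nearest–point projection $P$ onto $\partial\Omega$, set $v=-\rho\,(g\circ P)$ in a tubular neighborhood of $\partial\Omega$ and cut off smoothly towards the interior. Formally $\tau v=0$ and $\partial_n v=g$ on $\partial\Omega$, so the task reduces to verifying $v\in W^{2,p}_s(\Omega)$. The second derivatives of $v$ produce terms of the three types $(D^2\rho)\,(g\circ P)$, $(D\rho)\,D(g\circ P)$ and $\rho\,D^2(g\circ P)$, and the point is to bound these in the weighted $L^p$ norm. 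The key is that it is not $g$ itself but the products $g\,n_i$ that carry regularity: performing the construction face by face, where $n$ is constant and $P$ reduces to the orthogonal projection onto the face plane, each contribution is controlled through the $W^{1-s-\nicefrac{1}{p},p}(\partial\Omega)$ regularity of $g\,n_i$ together with the bounded right inverse of the weighted trace operator, while the weight $\rho^{s}$ compensates the growth of $D^2\rho$ as $x\to\partial\Omega$.

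I expect the surjectivity step to be the main obstacle, and within it the behavior near the edges and vertices, where the normal $n$ jumps and the projection $P$ fails to be smooth. This is exactly the reason the target space must be $N^{s,p}(\partial\Omega)$ rather than $W^{s,p}(\partial\Omega)$: the coupling of $g$ with the components of $n$ in \eqref{PDE:eq:def:Nsp} absorbs the edge singularities and allows the face-wise construction to patch into a single function in $W^{2,p}_s(\Omega)\cap W^{1,p}_0(\Omega)$. The forward direction, by contrast, follows fairly directly from the weighted trace theorem and the vanishing of the tangential gradient, in analogy with the two–dimensional statement in \cite[Lemma A.2]{CasasMateosRaymond:2009}.
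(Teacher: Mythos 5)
Your continuity half is correct and is essentially the paper's own argument: the paper proves the proposition by citing \cite[Theorem 7.8]{MazyaMitreaShaposhnikova:2010} with $m=2$ together with the observation that all tangential derivatives vanish, and your version of this step --- apply the first-order weighted trace theorem to each component $\partial_i v \in W^{1,p}_s(\Omega)$ and identify $\tau(\partial_i v) = (\partial_n v)\,n_i$ face by face --- is a correct, self-contained rendering of exactly that reasoning.

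The genuine gap is the surjectivity half, which is part of the assertion (the image \emph{equals} $N^{1-s-\nicefrac{1}{p},p}(\partial\Omega)$), and your collar construction $v = -\rho\,(g\circ P)$ fails as sketched. Concretely: (i) on a polyhedron the nearest-point projection $P$ is multivalued on the bisector surfaces of the dihedral angles, and these surfaces reach all the way down to the edges, so $g\circ P$ --- and hence $v$ itself, since $\rho>0$ off the boundary --- has jump discontinuities across two-dimensional surfaces arbitrarily close to $\partial\Omega$; such a $v$ is not even in $W^{1,p}_{\mathrm{loc}}$ there, and no cutoff ``towards the interior'' removes this. (ii) Even in the interior of a single face, $g$ has only the fractional regularity $W^{1-s-\nicefrac{1}{p},p}$, so $D^2(g\circ P)$ is not a function and $\rho\,D^2(g\circ P)$ cannot be estimated in weighted $L^p$: extensions that are constant in the normal direction never gain a derivative, which is why genuine trace right inverses mollify at a scale comparable to $\dist(x,\partial\Omega)$; your appeal to ``the bounded right inverse of the weighted trace operator'' does not explain how that operator is to produce a function whose \emph{full gradient} has boundary trace $g\,n$. (iii) Most importantly, the claim that the structure of $N^{s,p}(\partial\Omega)$ ``absorbs the edge singularities and allows the face-wise construction to patch'' is precisely the nontrivial content of the proposition, not something one may assume: it encodes compatibility conditions at the edges (for instance, when $1-s-\nicefrac{1}{p}>\nicefrac{1}{p}$, the linear independence of the normals of adjacent faces forces $g$ to vanish on edges in the trace sense), and proving that these conditions suffice to build a preimage in $W^{2,p}_s(\Omega)\cap W^{1,p}_0(\Omega)$ is exactly what the Whitney-array trace characterization of \cite[Theorem 7.8]{MazyaMitreaShaposhnikova:2010} provides; the paper obtains both directions at once by citing that theorem. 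Note also that downstream the paper only ever uses the continuity direction, but the proposition as stated claims the full image characterization, so a complete proof must supply the surjectivity argument you left open.
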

\begin{proof}
	This statement (which is true on a general Lipschitz domain) follows
	from \cite[Theorem 7.8]{MazyaMitreaShaposhnikova:2010} with $m=2$ using the fact that
	all tangential derivatives vanish. For $s=0$ and $p=2$ this result is explicitly formulated in \cite[Lemma
	6.3]{GesztesyMitrea:2011}, where also the corresponding space $N^{\nhalf}(\partial \Omega)$ is introduced, cf. also \cite{GeymonatKrasucki:2000} and \cite{DuranMuschietti:2001} for a two-dimensional domain.  
\end{proof}

In the following, we will check
that $N^{s,p}(\partial \Omega) \hookrightarrow W^{s,p}(\partial \Omega)$ on convex and polyhedral domains, which allows to prove the desired regularity for the normal trace.

\begin{lemma}\label{PDE:theorem:Nsp_into_Wsp}
	Let $0<s<1$ and $1<p<\infty$. Then there holds
	\[
	N^{s,p}(\partial \Omega) \hookrightarrow W^{s,p}(\partial \Omega).
	\]
\end{lemma}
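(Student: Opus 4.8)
The plan is to reconstruct $g$ from the products $h_i := g\,n_i$ and to bound the Sobolev--Slobodeckij seminorm of $g$ directly. Since $n$ is a unit normal, $\sum_{i=1}^{3} n_i^2 = 1$, and therefore $g = \sum_{i=1}^{3} n_i h_i$ pointwise on $\partial\Omega$. Because each $h_i \in W^{s,p}(\partial\Omega)\hookrightarrow L^p(\partial\Omega)$ and $\abs{n_i}\le 1$, this already yields $g\in L^p(\partial\Omega)$ with $\norm{g}_{L^p(\partial\Omega)}\le\sum_i\norm{h_i}_{L^p(\partial\Omega)}$. Hence the only thing left to control is
\[
\int_{\partial\Omega}\int_{\partial\Omega}\frac{\abs{g(x)-g(y)}^p}{\abs{x-y}^{2+sp}}\,ds_x\,ds_y
\]
by the $N^{s,p}(\partial\Omega)$-norm of $g$ (note that $\partial\Omega$ is two-dimensional, which explains the exponent $2+sp$).

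The second step is an algebraic identity. Expanding $\sum_i n_i(x)\bigl(h_i(x)-h_i(y)\bigr)$ using $h_i = g\,n_i$, doing the same with $n_i(y)$, and averaging the two resulting expressions, I expect to obtain, whenever $1+n(x)\cdot n(y)\neq 0$,
\[
g(x)-g(y) = \frac{\sum_{i=1}^{3}\bigl(n_i(x)+n_i(y)\bigr)\bigl(h_i(x)-h_i(y)\bigr)}{1+n(x)\cdot n(y)}.
\]
Since $\abs{n_i(x)+n_i(y)}\le 2$, this reduces everything to bounding the denominator $1+n(x)\cdot n(y)$ away from zero for relevant pairs $(x,y)$.

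Here the geometry and the convexity of $\Omega$ enter. The normal $n$ takes only finitely many values $N_1,\dots,N_m$, one per face, and $1+N_a\cdot N_b$ vanishes exactly when $N_a=-N_b$, i.e. when the two faces lie in parallel planes. By convexity such faces lie on opposite supporting planes of $\Omega$ and are therefore separated by a positive distance, so there is $\eta_0>0$ such that points $x,y\in\partial\Omega$ with $\abs{x-y}<\eta_0$ never belong to a pair of faces with opposite normals; consequently $1+n(x)\cdot n(y)\ge c_0 := \min\Set{1+N_a\cdot N_b | N_a\neq -N_b}>0$ there. I would then split the double integral at $\abs{x-y}=\eta_0$. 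On $\{\abs{x-y}<\eta_0\}$ the identity together with $1+n(x)\cdot n(y)\ge c_0$ and $\bigl(\sum_{i=1}^3 a_i\bigr)^p\le 3^{p-1}\sum_i a_i^p$ bounds the integrand by $C\sum_i\abs{h_i(x)-h_i(y)}^p\abs{x-y}^{-(2+sp)}$, whose integral is $C\sum_i\abs{h_i}_{W^{s,p}(\partial\Omega)}^p$. On $\{\abs{x-y}\ge\eta_0\}$ the kernel is bounded by $\eta_0^{-(2+sp)}$, so the integral is controlled by $\norm{g}_{L^p(\partial\Omega)}^p$. Combining these with the $L^p$ bound gives $\norm{g}_{W^{s,p}(\partial\Omega)}\le C\norm{g}_{N^{s,p}(\partial\Omega)}$, which is the asserted continuous embedding.

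The algebraic identity and the two integral estimates are routine; the one genuinely delicate point is the uniform lower bound $1+n(x)\cdot n(y)\ge c_0>0$ for nearby boundary points. This is exactly where convexity is indispensable: on a general Lipschitz domain two faces sharing an edge could have opposite (or nearly opposite) normals at a reentrant edge, the denominator would degenerate precisely where the kernel is singular, and the embedding would fail. Thus I expect the main obstacle to be the careful verification of this geometric lower bound rather than the analytic estimates.
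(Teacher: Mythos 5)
Your proof is correct, and it rests on the same algebraic mechanism as the paper's: testing $g\,n_i$ against the vector $\gamma=n(x)+n(y)$, whose dot product with either normal is $1+n(x)\cdot n(y)$, and using convexity to keep that quantity away from zero. (Your identity checks out: expanding $\sum_i\bigl(n_i(x)+n_i(y)\bigr)\bigl(h_i(x)-h_i(y)\bigr)$ gives exactly $\bigl(g(x)-g(y)\bigr)\bigl(1+n(x)\cdot n(y)\bigr)$.) Where you genuinely differ is in the decomposition of the double integral. The paper splits it by \emph{pairs of faces}: the diagonal pairs $F=G$ are handled by first proving piecewise regularity $g|_F\in W^{s,p}(F)$ (via $\gamma=n_F$), pairs with $\bar F\cap\bar G=\emptyset$ by the bounded kernel, and adjacent pairs by introducing, for each such pair, the auxiliary function $w=(\gamma\cdot n)g\in W^{s,p}(\partial\Omega)$ with $\gamma=n_F+n_G$, using only that the normals of neighbouring faces of a convex domain are linearly independent, so $C_{FG}=1+n_F\cdot n_G\neq 0$. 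You instead split by \emph{distance} $\abs{x-y}\lessgtr\eta_0$ and use a single global pointwise identity; the price is a slightly stronger geometric input, namely that faces with exactly antipodal normals are metrically separated, which you correctly derive from the fact that such faces lie in two distinct parallel supporting planes of the convex body (distinct because $\Omega$ has nonempty interior), together with finiteness of the set of faces to get the uniform constant $c_0$. What your route buys is economy: the near-region estimate treats same-face and adjacent-face pairs uniformly, so the separate piecewise-regularity step and the per-pair auxiliary functions of the paper's proof become unnecessary; what the paper's route buys is that it never needs the separation of antipodal faces at all, since any pair of faces with disjoint closures, antipodal or not, is disposed of by the bounded kernel, and convexity enters only through the local (dihedral-angle) condition at shared edges.
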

\begin{proof}
	First we check, that $N^{s,p}(\partial \Omega) \subset W^{s,p}_{\mathrm{pw}}(\partial \Omega)$. Let $g \in N^{s,p}(\partial \Omega)$. 
	By the definition \eqref{PDE:eq:def:Nsp} of $N^{s,p}(\partial \Omega)$ we have that 
	$g n_i  \in W^{s,p}(\partial \Omega)$ for $1\le i\le 3$. Thus, for every vector $\gamma \in \R^3$ we obtain 
	\[
	(\gamma \cdot n) g =  \sum_{i=1}^3 \gamma_i (g n_i ) \in W^{s,p}(\partial \Omega).
	\]
	Let $F \in {\mathcal F}(\partial \Omega)$ be a face with the unit outer normal vector $n_F$. Choosing $\gamma = n_F$ we obtain that $\gamma \cdot n = 1$ on $F$ and therefore $g \in W^{s,p}(F)$ with
	\[
	\norm{g}_{W^{s,p}(F)} \le c \norm{g}_{N^{s,p}(\partial \Omega)}. 
	\]
	
	Since $F$ was arbitrary we have $g \in W^{s,p}_{\mathrm{pw}}(\partial \Omega)$.
	
	Let again $g \in N^{s,p}(\partial \Omega)$ be arbitrary. By the previous argument $g$ is especially in $L^p(\partial \Omega)$. In order to prove $g \in W^{s,p}(\partial \Omega)$  we have to check, that
	\[
	I = \intdo \intdo \frac{\abs{g(x)-g(y)}^p}{\abs{x-y}^{2+sp}}\,dx\,dy
	\]
	is finite. We obtain
	\[
	I = \sum_{F \in {\mathcal F}(\partial \Omega)} \sum_{G \in {\mathcal F}(\partial \Omega)} \int_{F} \int_{G} \frac{\abs{g(x)-g(y)}^p}{\abs{x-y}^{2+sp}}\,dx\,dy.
	\]
	The contributions for $F=G$ are bounded by $\norm{g}^p_{W^{s,p}_{\mathrm{pw}}(\partial \Omega)}$. For
	$F$ and $G$ with $\bar F \cap \bar G = \emptyset$, we have
	\[
	\dist(\bar F,\bar G) = \delta_{FG} >0
	\]
	and therefore
	\[
	\int_{F} \int_{G} \frac{\abs{g(x)-g(y)}^p}{\abs{x-y}^{2+sp}}\,dx\,dy \le c \norm{g}^p_{L^p(\partial \Omega)}.
	\]
	Thus, it remains to bound the contributions for $F \neq G$ with $\bar F \cap \bar G \neq \emptyset$. Let $n_F$ and
	$n_G$ be the corresponding normal vectors. Since $F$ and $G$ are neighboring faces of a convex domain, the vectors $n_F$ and $n_G$ are linearly independent. This allows us to choose a vector $\gamma = n_F + n_G \in \R^3$, $\gamma \neq 0$ with the following properties
	\[
	\gamma \cdot n_F = \gamma \cdot n_G = C_{FG} \neq 0.
	\]
	From the fact that $g \in N^{s,p}(\partial \Omega)$, we know that
	$
	g \, n_i \in W^{s,p}(\partial \Omega)
	$
	for $i=1,2,3$, where $n_i$ denotes the components of the normal vector $n$. We consider a new function $w$ given as
	\[
	w = \sum_{i=1}^N \gamma_i (g\, n_i ) = (\gamma \cdot n) g.
	\]
	There holds by construction $w \in W^{s,p}(\partial \Omega)$ and therefore especially
	\[
	\int_{F} \int_{G} \frac{\abs{w(x)-w(y)}^p}{\abs{x-y}^{2+sp}}\,dx\,dy \le c \norm{g}_{N^{s,p}(\partial \Omega)}
	\]
	is finite. Since there holds $w = (\gamma \cdot n) g$ with $\gamma \cdot n = C_{FG}\neq
	0$ on  $F \cup G$, the contribution
	\[
	\int_{F} \int_{G} \frac{\abs{g(x)-g(y)}^p}{\abs{x-y}^{2+sp}}\,dx\,dy.
	\]
	is finite and bounded by $\norm{g}_{N^{s,p}(\partial \Omega)}$ as well. This completes the proof.
\end{proof}

As a corollary we obtain the following trace theorem for the normal trace.

\begin{theorem}\label{PDE:theorem:normaltrace_estimate}
	Let $v \in W^{2,p}(\Omega)\cap
	W^{1,p}_0(\Omega)$ for some $1 < p < \infty$. Then, the normal derivative $\partial_n v$ on
	$\partial \Omega$ is well defined (in the trace sense) with 
	\[
	\partial _n v \in W^{1-\nicefrac{1}{p},p}(\partial \Omega).
	\]
	There is a constant $c>0$ independent of $v$ such that
	\[
	\norm{\partial_n v}_{W^{1-\nicefrac{1}{p},p}(\partial \Omega)} \le c \norm{v}_{W^{2,p}(\Omega)}.
	\]
\end{theorem}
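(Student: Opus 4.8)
The plan is to combine the two preceding results by specializing the weight parameter, since the present statement is essentially a corollary of them. First I would observe that the unweighted space $W^{2,p}(\Omega)$ appearing in the hypothesis coincides with the weighted space $W^{2,p}_s(\Omega)$ for the choice $s=0$, as noted directly after the definition \eqref{PDE:eq:norm_weighted_w_space}. Consequently, the assumption $v \in W^{2,p}(\Omega)\cap W^{1,p}_0(\Omega)$ places $v$ precisely in the domain of the normal-trace operator $\gamma_1$ studied in \cref{PDE:theorem:normaltrace:Nsp} with $s=0$.

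Next I would verify that $s=0$ lies in the admissible range $-\nicefrac{1}{p}<s<1-\nicefrac{1}{p}$ demanded by \cref{PDE:theorem:normaltrace:Nsp}. The left inequality $-\nicefrac{1}{p}<0$ is automatic for $p>0$, while the right inequality $0<1-\nicefrac{1}{p}$ is equivalent to $p>1$; hence the entire parameter range $1<p<\infty$ of the theorem is covered. Applying \cref{PDE:theorem:normaltrace:Nsp} then yields $\partial_n v \in N^{1-\nicefrac{1}{p},p}(\partial \Omega)$ together with the continuity estimate
\[
\norm{\partial_n v}_{N^{1-\nicefrac{1}{p},p}(\partial \Omega)} \le c \norm{v}_{W^{2,p}(\Omega)}.
\]

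To conclude I would invoke \cref{PDE:theorem:Nsp_into_Wsp} with exponent $s=1-\nicefrac{1}{p}$. Because $0<1-\nicefrac{1}{p}<1$ holds for all $1<p<\infty$, the embedding $N^{1-\nicefrac{1}{p},p}(\partial \Omega)\hookrightarrow W^{1-\nicefrac{1}{p},p}(\partial \Omega)$ is available on our convex polyhedral domain. Chaining the embedding with the previous estimate gives $\partial_n v \in W^{1-\nicefrac{1}{p},p}(\partial \Omega)$ and
\[
\norm{\partial_n v}_{W^{1-\nicefrac{1}{p},p}(\partial \Omega)} \le c \norm{\partial_n v}_{N^{1-\nicefrac{1}{p},p}(\partial \Omega)} \le c \norm{v}_{W^{2,p}(\Omega)},
\]
which is the asserted bound. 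The substantive work has already been discharged in \cref{PDE:theorem:Nsp_into_Wsp}, whose proof exploits the convexity of $\Omega$ through the linear independence of neighboring face normals; here there is no remaining obstacle beyond correctly aligning the value $s=0$ used in the proposition with the value $s=1-\nicefrac{1}{p}$ fed into the lemma.
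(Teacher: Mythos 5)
Your proposal is correct and follows exactly the paper's own argument: apply \cref{PDE:theorem:normaltrace:Nsp} with $s=0$ (valid since $1<p<\infty$) to place $\partial_n v$ in $N^{1-\nicefrac{1}{p},p}(\partial \Omega)$ with the continuity bound, then conclude via the embedding $N^{1-\nicefrac{1}{p},p}(\partial \Omega) \hookrightarrow W^{1-\nicefrac{1}{p},p}(\partial \Omega)$ from \cref{PDE:theorem:Nsp_into_Wsp}. The only difference is that you spell out the parameter-range verifications, which the paper leaves implicit.
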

\begin{proof}
	Let $v \in W^{2,p}(\Omega)\cap W^{1,p}_0(\Omega)$. From \cref{PDE:theorem:normaltrace:Nsp} for $s = 0$, we have $\partial_n v \in  N^{1-\nicefrac{1}{p},p}(\partial \Omega)$ and there holds
	\[
	\norm{\partial_n v}_{N^{1-\nicefrac{1}{p},p}(\partial \Omega)} \le c \norm{v}_{W^{2,p}(\Omega)}.
	\]
	Then by the embedding 
	$
	N^{1-\nicefrac{1}{p},p}(\partial \Omega) \hookrightarrow W^{1-\nicefrac{1}{p},p}(\partial \Omega)
	$
	from \cref{PDE:theorem:Nsp_into_Wsp} we obtain the desired result.
\end{proof}

A standard variational formulation of the state equation \eqref{DirichletCon:eq:state} requires $q \in H^{\nhalf}(\partial \Omega)$. For a functional analytic formulation of the optimal control problem, we require a formulation, which allows for $q \in L^2(\partial \Omega)$. We will use the so called very weak formulation, which is well posed even for $q \in H^{-\nhalf}(\partial \Omega)$, where $H^{-\nhalf}(\partial \Omega)$ is the dual space of $H^{\nhalf}(\partial \Omega)$. For given  $q \in H^{-\nhalf}(\partial \Omega)$ we call $u$ a very weak solution of \eqref{DirichletCon:eq:state}, if
\begin{equation}\label{PDE:eq:very_weak_inhDirichlet}
	u \in L^2(\Omega) \quad:\quad (u,-\Lap \phi) = -\langle q,\partial_n \phi \rangle_{\partial \Omega} \quad \text{for all } \phi \in H^2(\Omega) \cap H^1_0(\Omega).
\end{equation}
The duality paring between $H^{\nhalf}(\partial \Omega)$ and its dual is denoted by $\langle \cdot,\cdot \rangle_{\partial \Omega}$.
\begin{remark}\label{PDE:remark:veryweak_as_extension_of_weak}
	It is straightforward to check, that for $q \in H^{\nhalf}(\partial \Omega)$ the standard variational solution $u \in H^1(\Omega)$ of \eqref{DirichletCon:eq:state}  fulfills the very weak formulation \cref{PDE:eq:very_weak_inhDirichlet}.
\end{remark}

The existence, uniqueness, and also additional regularity $u \in H^{\nhalf}(\Omega)$ if $q \in L^2(\partial \Omega)$ of a very weak solution in the above sense is well known for smooth domains. Such results for polygonal (and even non-convex) domains can be found in \cite{ApelNicaisePfefferer:2016}. The result below holds for three dimensional convex polyhedral domains. 
\begin{theorem}\label{theorem:state}
Let $q \in H^{-\nhalf}(\partial \Omega)$. There exists a unique very weak solution of \eqref{DirichletCon:eq:state} in the sense of \eqref{PDE:eq:very_weak_inhDirichlet}. 
\begin{enumerate}
\item There is a constant $c$ depending only on $\Omega$ such that the following estimate holds
\[
\ltwonorm{u} \le c \norm{q}_{H^{-\nhalf}(\partial \Omega)}.
\]
\item If $q\in L^2(\partial \Omega)$, then $u \in H^{\nhalf}(\Omega)$ and there holds
\[
\ltwonorm{\rho^\half \nabla u} + \norm{u}_{H^{\nhalf}(\Omega)} \le c \ltwonormdo{q}.
\]
\item If $q \in H^{\nhalf}(\partial \Omega)$ then $u \in H^1(\Omega)$ and there holds
\[
\norm{u}_{H^1(\Omega)} \le c \norm{q}_{H^{\nhalf}(\partial \Omega)}.
\]
\end{enumerate}
\end{theorem}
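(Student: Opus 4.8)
The plan is to treat the three parts in order of increasing difficulty, the one genuinely hard point being the weighted gradient bound in part~(2). For part~(1) I would use the transposition (very weak) method. On a convex polyhedral domain the Laplacian $-\Lap\colon H^2(\Omega)\cap H^1_0(\Omega)\to L^2(\Omega)$ is an isomorphism by $H^2$-regularity, so let $T=(-\Lap)^{-1}$ denote its bounded inverse. For fixed $q\in H^{-\nhalf}(\partial\Omega)$ consider the linear functional $f\mapsto -\langle q,\partial_n Tf\rangle_{\partial\Omega}$ on $L^2(\Omega)$. Since $Tf\in H^2(\Omega)\cap H^1_0(\Omega)$, \cref{PDE:theorem:normaltrace_estimate} with $p=2$ gives $\partial_n Tf\in H^{\nhalf}(\partial\Omega)$ with $\norm{\partial_n Tf}_{H^{\nhalf}(\partial\Omega)}\le c\htwonorm{Tf}\le c\ltwonorm{f}$, whence the functional is bounded with norm $\le c\norm{q}_{H^{-\nhalf}(\partial\Omega)}$. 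The Riesz representation theorem then yields a unique $u\in L^2(\Omega)$ with $(u,f)=-\langle q,\partial_n Tf\rangle_{\partial\Omega}$ for all $f$ and $\ltwonorm{u}\le c\norm{q}_{H^{-\nhalf}(\partial\Omega)}$; since $-\Lap$ is onto $L^2(\Omega)$ this is exactly \eqref{PDE:eq:very_weak_inhDirichlet}, and surjectivity likewise gives uniqueness.

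Part~(3) I would obtain from classical theory: for $q\in H^{\nhalf}(\partial\Omega)$ there is a weak solution $u\in H^1(\Omega)$ of \eqref{DirichletCon:eq:state} with $\honenorm{u}\le c\norm{q}_{H^{\nhalf}(\partial\Omega)}$ (lift $q$ into $H^1(\Omega)$ and solve a homogeneous Dirichlet problem for the correction), and by \cref{PDE:remark:veryweak_as_extension_of_weak} together with the uniqueness from part~(1) this weak solution coincides with the very weak one. The $H^{\nhalf}(\Omega)$-bound in part~(2) then follows by interpolation: the solution map $q\mapsto u$ is bounded $H^{-\nhalf}(\partial\Omega)\to L^2(\Omega)$ by part~(1) and $H^{\nhalf}(\partial\Omega)\to H^1(\Omega)$ by part~(3), and interpolating at $\theta=\nhalf$ uses $[H^{-\nhalf}(\partial\Omega),H^{\nhalf}(\partial\Omega)]_{\nhalf}=L^2(\partial\Omega)$ and $[L^2(\Omega),H^1(\Omega)]_{\nhalf}=H^{\nhalf}(\Omega)$.

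The remaining weighted gradient estimate $\ltwonorm{\rho^\half\nabla u}\le c\ltwonormdo{q}$ I would attack by duality. Writing $\ltwonorm{\rho^\half\nabla u}=\sup\{\int_\Omega\nabla u\cdot\vec h : \norm{\rho^{-\half}\vec h}_{L^2(\Omega)}\le 1\}$, it suffices to bound $\int_\Omega\nabla u\cdot\vec h$ uniformly for $\vec h=\rho^\half\vec g$ with $\vec g\in C_c^\infty(\Omega)^3$, $\ltwonorm{\vec g}\le 1$. For such $\vec h$ let $\phi\in H^2(\Omega)\cap H^1_0(\Omega)$ solve $-\Lap\phi=\div\vec h$; since $\rho$ vanishes on $\partial\Omega$ the boundary contribution drops out and the very weak formulation \eqref{PDE:eq:very_weak_inhDirichlet} gives the identity $\int_\Omega\nabla u\cdot\vec h=\langle q,\partial_n\phi\rangle_{\partial\Omega}$, so that everything reduces to the adjoint bound $\ltwonormdo{\partial_n\phi}\le c\ltwonorm{\vec g}$. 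To control $\partial_n\phi$ I would place $\phi$ in a weighted space $W^{2,2}_s(\Omega)$ for a suitable $s\in(0,\half)$ by weighted elliptic regularity on the convex polyhedron, and then invoke the weighted normal trace result \cref{PDE:theorem:normaltrace:Nsp} (with $p=2$) together with the embedding $N^{\nhalf-s,2}(\partial\Omega)\hookrightarrow W^{\nhalf-s,2}(\partial\Omega)\hookrightarrow L^2(\partial\Omega)$ supplied by \cref{PDE:theorem:Nsp_into_Wsp}, the first embedding being exactly where convexity enters.

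The main obstacle is precisely this adjoint estimate. The right-hand side $\div(\rho^\half\vec g)$ is of divergence type and carries a $\rho^{-\half}$ singularity, so it does not belong to $L^2(\Omega)$ uniformly in $\vec g$; matching the differentiation order with the weight power so that $\phi$ still gains enough regularity for $\partial_n\phi$ to be controlled in $L^2(\partial\Omega)$ is the delicate part, and it is here that the admissible range of weight exponents, tied to $\lambda_\Omega=\pi/\omega_\Omega$ and hence to the convexity of $\Omega$, becomes essential. The Hardy-type inequalities \cref{theorem:HardyFractional} and \cref{theorem:HardyType_s} are the natural tools for absorbing the $\rho^{-\half}$ factor in this step. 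As a more self-contained alternative I would establish the bound first for smooth $q$, where part~(3) makes $u$ regular enough to integrate by parts, via the Green identity $\int_\Omega\rho\snorm{\nabla u}^2=\tfrac12\int_\Omega u^2\Lap\rho-\tfrac12\intdo q^2\,\partial_n\rho$, exploiting that the distance function to $\partial\Omega$ is concave on the convex domain $\Omega$ so that the interior term has a favourable sign, and then pass to the general case by density.
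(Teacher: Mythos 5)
Your treatment of existence/uniqueness, the $L^2$ bound, the $H^1$ bound, and the $H^{\nhalf}(\Omega)$ bound coincides with the paper's: the paper also defines the functional $\psi \mapsto -\langle q,\partial_n\phi\rangle_{\partial\Omega}$ via the adjoint Dirichlet problem, uses $H^2$ regularity on the convex domain together with \cref{PDE:theorem:normaltrace_estimate} for $p=2$, applies Riesz representation, and then gets the $H^{\nhalf}(\Omega)$ estimate by interpolating between the $H^{-\nhalf}\to L^2$ and $H^{\nhalf}\to H^1$ bounds. The only real divergence is the weighted gradient estimate, which the paper does not prove by hand at all: it simply invokes \cite[Theorem 4.1]{JerisonKenig:1995}, which states that for a function harmonic in a Lipschitz domain the membership $u\in H^{\nhalf}(\Omega)$ is equivalent to $\rho^{\half}\nabla u\in L^2(\Omega)$ (with comparable norms), so the weighted bound falls out of the already-established $H^{\nhalf}$ estimate. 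Your \emph{primary} route for this step (duality against $\vec h=\rho^{\half}\vec g$ and the adjoint problem $-\Lap\phi=\div\vec h$) is, as you half-admit, not a proof: the required adjoint estimate $\ltwonormdo{\partial_n\phi}\le c\ltwonorm{\vec g}$ is exactly the dual reformulation of the inequality $\ltwonorm{\rho^{\half}\nabla u}\le c\ltwonormdo{q}$ you are trying to prove, so the reduction is circular unless you supply an independent weighted-regularity argument for right-hand sides of the form $\div(\rho^{\half}\vec g)$, which you only gesture at. Your \emph{alternative} route, however, is a genuine and essentially correct different proof: for harmonic $u$ the identity
\begin{equation*}
\into \rho\,\snorm{\nabla u}^2\,dx \;=\; \half\into u^2\,\Lap\rho\,dx \;-\; \half\intdo q^2\,\partial_n\rho\,ds
\end{equation*}
combined with concavity of the distance function on a convex domain gives the bound directly, and this is where convexity enters in a transparent way (the paper's citation of Jerison--Kenig needs only the Lipschitz property). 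Two caveats if you pursue it: you must run the identity with the \emph{exact} distance function $\dist(\cdot,\partial\Omega)$ (whose distributional Laplacian is a non-positive measure, giving the sign, and whose normal derivative is $-1$ on the faces), not with the paper's regularized weight $\rho$ from Stein, which satisfies no sign condition on $\Lap\rho$ -- the two weights are comparable, so the estimate transfers; and the integration by parts should be justified by exhausting $\Omega$ with the level sets $\{\dist>\eps\}$ and passing to the limit, with the density step for general $q\in L^2(\partial\Omega)$ handled by interior convergence of $\nabla u_k$ and Fatou, exactly as you indicate.
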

\begin{proof}
The existence, uniqueness and the estimate for $\ltwonorm{u}$ is shown by the Riesz representation theorem. To this end we consider a linear functional $G \colon L^2(\Omega) \to \R$ defined in the following way. Let $\psi \in L^2(\Omega)$ and let $\phi$ be the weak solution of 
\[
	\begin{aligned}
	-\Lap \phi &= \psi &\quad&\text{in } \Omega,\\
	\phi &=0 &\quad&\text{on } \partial \Omega.
\end{aligned}
\]
We set $G(\psi) = -\langle q,\partial_n \phi \rangle_{\partial \Omega}$. By elliptic regularity (due  to convexity of the domain, see \cite[Theorem 3.2.1.2]{Grisvard:1985}) we have $\phi \in H^2(\Omega)\cap H^1_0(\Omega)$ and by the trace estimate from \cref{PDE:theorem:normaltrace_estimate} for $p=2$ we get $\partial_n \phi \in H^{\nhalf}(\partial \Omega)$ with
\[
\norm{\partial_n \phi}_{H^{\nhalf}(\partial \Omega)} \le c \htwonorm{\phi} \le c \ltwonorm{\psi}.
\]
There holds
\[
G(\psi) = -\langle q,\partial_n \phi \rangle_{\partial \Omega} \le \norm{q}_{H^{-\nhalf}(\partial \Omega)} \norm{\partial_n \phi}_{H^{\nhalf}(\partial \Omega)} \le c \norm{q}_{H^{-\nhalf}(\partial \Omega)} \ltwonorm{\psi}.
\]
Thus, $G$ is a continuous linear functional, and Riesz representation theorem yields the result.
The estimate for $q\in H^{\nhalf}(\partial \Omega)$ is standard and the result for $q \in L^2(\partial \Omega)$ is obtained by interpolation between the two previous estimates. 
It remains to discuss the weighted estimate. It follows by \cite[Theorem 4.1]{JerisonKenig:1995} from the regularity $u \in H^{\nhalf}(\Omega)$ and the fact that $u$ is harmonic in $\Omega$.
\end{proof}

In the sequel we also require regularity results for the equation with homogeneous Dirichlet boundary conditions to be applied for the adjoint equation. We consider the weak solution $z \in H^1_0(\Omega)$ of
\begin{equation}\label{eq:z_rhs_f}
 \begin{aligned}
 	-\Lap z &= f &\quad&\text{in } \Omega,\\
 	z &=0 &\quad&\text{on } \partial \Omega.
 \end{aligned}
\end{equation}

By convexity we know $z \in H^2(\Omega)\cap H^1_0(\Omega)$ if $f \in L^2(\Omega)$. Also a $W^{2,p}(\Omega)$ regularity result for $2 \le p < p_\Omega$ with some $p_\Omega > 2$ is well known. In addition, we will require regularity results in $C^{1,\gamma}(\bar \Omega)$ as well as in the weighted space $W^{2,2}_{-s}(\Omega)$.

\begin{proposition}\label{prop:Hoelder}
	Let $f \in L^p(\Omega)$ with some $p>3$. Then, there is some $\gamma>0$ such that the solution $z$ to \eqref{eq:z_rhs_f} possesses the regularity $z \in C^{1,\gamma}(\bar \Omega)$. There holds the estimate
	\[
	\norm{z}_{C^{1,\gamma}(\bar \Omega)} \le c \lpnorm{f}
	\]
	with a constant $c>0$ independent of $f$.
\end{proposition}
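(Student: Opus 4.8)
The plan is to prove the estimate by a localization argument: I cover $\bar\Omega$ by finitely many patches — an interior part, neighborhoods of the relatively open faces, neighborhoods of the edges, and neighborhoods of the vertices — and assemble local $C^{1,\gamma}$ bounds by a subordinate partition of unity. On each patch the localized contribution to $z$ solves a Dirichlet problem whose right-hand side is controlled by $\lpnorm{f}$, so that summing the local bounds yields the asserted linear estimate. On the interior patch and on the patches meeting only a single open face the boundary is locally flat, and the classical $L^p$ theory for the Dirichlet problem gives $z \in W^{2,p}$ locally. Since $p>3$, the Sobolev embedding $W^{2,p}(\Omega)\hookrightarrow C^{1,\gamma}(\bar\Omega)$ with $\gamma = 1-\nicefrac{3}{p}$ furnishes the Hölder continuity of $\nabla z$ there, together with the corresponding bound by $\lpnorm{f}$.

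The essential part is the behavior near the edges and vertices, where I would invoke the singularity expansion for the Dirichlet Laplacian on wedges and polyhedral cones (the corner-singularity theory of elliptic problems on polyhedra, cf. \cite{Grisvard:1985}). Near an edge $e$ the solution splits into a finite sum of singular terms scaling like $r^{\lambda_e}$ in the distance $r$ to the edge, plus a regular remainder in $W^{2,p}$. By convexity $\omega_e \le \omega_\Omega < \pi$, so each exponent $\lambda_e = \pi/\omega_e$ satisfies $\lambda_e>1$ (cf. \eqref{eq:lambda_Omega}), and the gradient of each singular term behaves like $r^{\lambda_e-1}$, which is Hölder continuous up to the edge with exponent $\min(\lambda_e-1,1)>0$. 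Near a vertex the tangent cone is a pointed convex cone, so its spherical cross-section is strictly contained in a hemisphere; by monotonicity of Dirichlet eigenvalues the first eigenvalue $\mu_1$ of the Laplace--Beltrami operator on the cap exceeds the hemisphere value $2$, whence the smallest vertex exponent $\nu_1 = \tfrac12\bigl(-1+\sqrt{1+4\mu_1}\bigr)$ satisfies $\nu_1>1$, and the associated singular terms again have gradients of order $\rho^{\nu_1-1}$, Hölder continuous up to the vertex.

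Choosing $\gamma>0$ as the minimum of $1-\nicefrac{3}{p}$, of $\lambda_e-1$ over all edges, and of the vertex exponents minus one, every patch contributes a $C^{0,\gamma}$ bound for $\nabla z$ in terms of $\lpnorm{f}$. The partition of unity then assembles these into $z \in C^{1,\gamma}(\bar\Omega)$ together with $\norm{z}_{C^{1,\gamma}(\bar\Omega)} \le c\lpnorm{f}$, since each local estimate is linear in $f$.

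The main obstacle is the analysis at the vertices, where the conical vertex singularity and the wedge singularities of the edges meeting at that vertex interact and must be resolved simultaneously; the exponents governing the combined expansion are the delicate point, and verifying that convexity forces \emph{all} of them strictly above $1$ is where the argument has to be carried out carefully. By contrast, the interior and single-face estimates and the final patching are routine.
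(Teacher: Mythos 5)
Your route is genuinely different from the paper's: the paper proves this proposition in one line, by citing the H\"older estimates for derivatives of the Green's function on convex polyhedral domains from \cite[Theorem 1]{GuzmanLeykekhmanannRossmannSchatz:2009}, which already encapsulate the entire edge/vertex analysis. You instead try to re-derive such a result from scratch via localization and corner-singularity expansions. Parts of your sketch are sound: the interior and flat-face patches via Calder\'on--Zygmund theory and the embedding $W^{2,p}\hookrightarrow C^{1,1-\nicefrac{3}{p}}$ for $p>3$ (after reducing to $p\le 6$ so the cutoff commutator terms $\nabla\eta\cdot\nabla z$, $z\Delta\eta$ are controlled by the global $H^2$ bound), and also your vertex eigenvalue argument: a pointed convex cone has a supporting hyperplane meeting it only at the apex, so the spherical cross-section is compactly contained in an open hemisphere, $\mu_1>2$ by strict domain monotonicity, hence $\nu_1>1$. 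These steps are correct.

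The genuine gap is in the two patches that carry all the difficulty, and there your argument is a program rather than a proof. First, the stated form of the edge expansion is wrong in 3D: near an edge the solution is \emph{not} a finite sum of terms $c\, r^{\lambda_e}\sin(\lambda_e\theta)$ with constant coefficients plus a $W^{2,p}$ remainder. The coefficients are functions of the arc-length variable along the edge, of limited smoothness, and the naive tensor-product ansatz does not even produce an $L^p$ residual without correction terms (applying $-\Lap$ generates $c''(x_3)r^{\lambda_e}\sin(\lambda_e\theta)$). To conclude that $\nabla z$ is H\"older continuous \emph{in the edge direction} you need H\"older regularity of these coefficient functions, which requires the anisotropic weighted theory of Maz'ya--Rossmann/Dauge (cf.\ \cite{MazyaRossmann:2010}); the reference \cite{Grisvard:1985} you invoke covers the two-dimensional polygonal expansion and does not supply this. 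Second, at a vertex the cross-section of the tangent cone is a spherical polygon with corners (where the edges enter the vertex), so the Laplace--Beltrami eigenfunctions are themselves singular at those corners and the "singular terms" are not $\rho^{\nu_1}$ times a smooth angular profile; one must control the combined edge--vertex exponents simultaneously. You explicitly flag this interaction as the delicate point and defer it --- but that deferred step is precisely the content of the theorem the paper cites, so in its current form your proposal assumes the crux rather than proving it. Either carry out the vertex/edge analysis in the weighted framework of \cite{MazyaRossmann:2010}, or do as the paper does and quote \cite[Theorem 1]{GuzmanLeykekhmanannRossmannSchatz:2009} directly.
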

\begin{proof}
 The result follows by H\"older regularity for derivatives of the Green's function from \cite[Theorem 1]{GuzmanLeykekhmanannRossmannSchatz:2009}.
\end{proof}

To formulate the next result we define
\begin{equation}\label{eq:s_Omega}
	s_\Omega = \min\left(\lambda_\Omega-1,\half\right)\in\bigg(0,\frac12\bigg],
\end{equation}
where $\lambda_\Omega=\pi/\omega_\Omega\in(1,\infty)$ is the critical exponent and $\omega_\Omega\in(0,\pi)$ is the largest interior edge angle, see also \cref{eq:omega_Omega} and \cref{eq:lambda_Omega}.
	
\begin{theorem}\label{theorem:weighted_H2reg}
	Let $0<s_\Omega \le \frac{1}{2}$ be defined in \eqref{eq:s_Omega}. For every $0<s<s_\Omega$ and $f \in H^1(\Omega)$ the solution $z$ to \cref{eq:z_rhs_f} possesses the regularity $z \in W^{2,2}_{-s}(\Omega)$. There holds the estimate
	\[
	\norm{z}_{W^{2,2}_{-s}(\Omega)} \le \frac{c}{1-2s} \norm{f}_{H^1(\Omega)}.
	\]
	with a constant $c>0$ only depending on $\Omega$.
\end{theorem}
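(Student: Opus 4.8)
The plan is to reduce the singular weight $\rho^{-s}$ (singular since $s>0$) on the second derivatives to the subcritical weight $\rho^{1-s}$ (vanishing since $s<1$) on the third derivatives by means of the Hardy-type inequality \cref{theorem:HardyType_s}, and then to establish the resulting weighted $H^3$-type bound by differentiating the equation and localizing. I would first dispose of the lower order contributions to $\norm{z}_{W^{2,2}_{-s}(\Omega)}$: by convexity $z\in H^2(\Omega)$ with $\norm{z}_{H^2(\Omega)}\le c\norm{f}_{L^2(\Omega)}$, hence $z,\nabla z\in H^s(\Omega)$, and the fractional Hardy inequality \cref{theorem:HardyFractional} (valid for $0<s<\tfrac12$) applied to $z$ and to each $\partial_i z$ gives
\[
\norm{\rho^{-s}z}_{L^2(\Omega)} + \norm{\rho^{-s}\nabla z}_{L^2(\Omega)} \le c\big(\norm{z}_{H^s(\Omega)} + \norm{\nabla z}_{H^s(\Omega)}\big) \le c\norm{z}_{H^2(\Omega)} \le c\norm{f}_{L^2(\Omega)}.
\]

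For the leading term $\norm{\rho^{-s}D^2 z}_{L^2(\Omega)}$ I would apply \cref{theorem:HardyType_s} with $p=2$ to each second derivative $v=\partial_{ij}z$. Its hypothesis $0<s<\tfrac12$ is exactly the source of the admissible range and of the factor $\tfrac{1}{1-2s}$, and it yields
\[
\norm{\rho^{-s}\partial_{ij}z}_{L^2(\Omega)} \le \frac{c}{1-2s}\Big(\norm{\rho^{1-s}\partial_{ij}z}_{L^2(\Omega)} + \norm{\rho^{1-s}D^3 z}_{L^2(\Omega)}\Big).
\]
Since $1-s>0$ the weight $\rho^{1-s}$ is bounded, so the first term is controlled by $\norm{D^2 z}_{L^2(\Omega)}\le c\norm{f}_{L^2(\Omega)}$. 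Everything therefore hinges on the weighted third-order estimate $\norm{\rho^{1-s}D^3 z}_{L^2(\Omega)}\le c\norm{f}_{H^1(\Omega)}$, which I expect to be the main obstacle and which is where the edge geometry, i.e. the exponent $\lambda_\Omega$, must enter.

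To prove this weighted $H^3$ bound I would differentiate \cref{eq:z_rhs_f}, noting that each $\partial_k z$ solves $-\Lap \partial_k z = \partial_k f$ with $\partial_k f\in L^2(\Omega)$, and localize by a dyadic decomposition of $\Omega$ into shells $\{2^{-j-1}<\rho\le 2^{-j}\}$ on which $\rho\sim 2^{-j}$. On each shell I would cover by balls of radius comparable to the local distance to $\partial\Omega$ and apply scaled interior and boundary $H^3$ estimates. Near the (flat) interior of a face the zero Dirichlet condition propagates to the tangential derivatives, which gives local $H^3$ regularity up to the boundary; as $\rho^{1-s}$ is bounded, these shells contribute at most $c\norm{f}_{H^1(\Omega)}$. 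Near a convex vertex the relevant singular exponent is subcritical for the vanishing weight $\rho^{1-s}$ with $s<\tfrac12$, so vertices are not restrictive. The critical shells are those meeting an edge $e$, where $z$ carries the transverse singularity of order $r^{\lambda_e}$; summing the scaled estimates over $j$ produces a geometric series in $2^{-j(\lambda_e-1-s)}$, which converges precisely when $s<\lambda_e-1$. Taking the minimum over all edges yields convergence exactly for $s<\lambda_\Omega-1$.

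Combining the three displays and summing over $i,j$ gives $\norm{z}_{W^{2,2}_{-s}(\Omega)}\le\frac{c}{1-2s}\norm{f}_{H^1(\Omega)}$ for all $0<s<\min(\lambda_\Omega-1,\tfrac12)=s_\Omega$, with $c$ depending only on $\Omega$. The decisive feature is the appearance of the two thresholds from two distinct mechanisms: the restriction $s<\tfrac12$, together with the blow-up $\tfrac{1}{1-2s}$, comes entirely from the Hardy step \cref{theorem:HardyType_s} and reflects the transverse behaviour of $\rho^{-s}$ near the faces, whereas the restriction $s<\lambda_\Omega-1$ comes from the edge singularities in the weighted third-order estimate. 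The hard part is making the edge contribution rigorous; rather than the heuristic singular expansion used above merely to locate the exponent, I would invoke the known weighted (Kondrat'ev/Maz'ya--Rossmann type) regularity theory for the Dirichlet Laplacian on convex polyhedra, applied fibrewise along the edges, to obtain the summable scaled bounds.
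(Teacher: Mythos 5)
Your strategy coincides with the paper's own proof in outline: dispose of the lower-order terms, apply the Hardy-type inequality \cref{theorem:HardyType_s} to reduce $\ltwonorm{\rho^{-s}\nabla^2 z}$ to the subcritical-weight term $\ltwonorm{\rho^{1-s}\nabla^3 z}$ (this step is indeed the sole source of the factor $\frac{1}{1-2s}$ and of the restriction $s<\frac12$), and control the weighted third derivatives by the Maz'ya--Rossmann weighted regularity theory --- which is exactly what the paper does, citing \cite[Lemma 4.3.1 and Corollary 4.1.10]{MazyaRossmann:2010} after replacing $\rho$ by the edge-distance $r$ via $\rho\le r$. Your dyadic-shell heuristic also locates the correct edge threshold $s<\lambda_\Omega-1$. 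However, two concrete steps needed to close the argument are missing.

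First, the Maz'ya--Rossmann estimate you ultimately invoke does not directly yield $\ltwonorm{r^{1-s}\nabla^3 z}\le c\norm{f}_{H^1(\Omega)}$: its right-hand side is $\ltwonorm{r^{-s}f}+\ltwonorm{r^{1-s}\nabla f}+\ltwonorm{z}$, i.e. it contains the datum with the \emph{singular} weight $r^{-s}$. Bounding $\ltwonorm{r^{-s}f}$ by $\norm{f}_{H^1(\Omega)}$ is a separate argument, which the paper carries out via H\"older's inequality with exponents $\nicefrac{3}{2}$ and $3$, the embedding $H^1(\Omega)\hookrightarrow L^6(\Omega)$, and the integrability of $r^{-3s}$ (valid since $r$ is the distance to the one-dimensional edge set and $3s<2$); this is precisely where the higher integrability of $f$ coming from $f\in H^1(\Omega)$ enters. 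Your proposal omits this step entirely, so the claimed bound by $\norm{f}_{H^1(\Omega)}$ does not yet follow. Relatedly, the hypotheses of the cited lemma must be verified: the paper checks the edge condition $1+s<\lambda_\Omega$ \emph{and} the vertex condition $-1-\Lambda_v<\frac12+s<\Lambda_v$, the latter using $\Lambda_v>1$, which is where convexity is invoked; your claim that vertices are ``not restrictive'' is true but rests on exactly this eigenvalue bound.

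Second, the constant. The theorem asserts $c=c(\Omega)$ uniformly in $s$, with the blow-up isolated in the explicit factor $\frac{1}{1-2s}$; this uniformity is not cosmetic, since later (\cref{DirichletCon:theorem:W22-s_oz_with_s_to_12}, \cref{cor:dj-djh}) the estimate is used with $s=\frac12-\lh^{-1}$. For the lower-order terms you use the fractional Hardy inequality \cref{theorem:HardyFractional}, whose constant is not tracked in $s$ and necessarily blows up as $s\uparrow\frac12$ (the inequality fails at $s=\frac12$), at a rate your argument does not control. The paper avoids this by applying \cref{theorem:HardyType_s} to $z$ and $\nabla z$ as well, which carries the explicit constant $\frac{c}{1-2s}$ with $c$ depending only on $\Omega$. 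This is a one-line fix, but as written your treatment of $\ltwonorm{\rho^{-s}z}+\ltwonorm{\rho^{-s}\nabla z}$ does not deliver the stated form of the estimate.
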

\begin{proof}
	There holds
	\[
	\norm{z}_{W^{2,2}_{-s}(\Omega)} \le \ltwonorm{\rho^{-s} z} + \ltwonorm{\rho^{-s} \nabla z} +\ltwonorm{\rho^{-s} \nabla^2 z}.
	\]
	Since $s<\half$ we obtain by \cref{theorem:HardyType_s}
	\[
	\ltwonorm{\rho^{-s} z} \le \frac{c}{1-2s} \left(\ltwonorm{\rho^{1-s} z} +\ltwonorm{\rho^{1-s}\nabla z}\right) \le \frac{c}{1-2s} \norm{z}_{H^1(\Omega)},
	\]
    and
	\[
	\ltwonorm{\rho^{-s} \nabla z} \le \frac{c}{1-2s} \left(\ltwonorm{\rho^{1-s}\nabla z}+\ltwonorm{\rho^{1-s}\nabla^2 z}\right) \le \frac{c}{1-2s} \htwonorm{z}.
	\]
	Thus, by $H^2$ regularity  we get
	\[
	\ltwonorm{\rho^{-s} z} + \ltwonorm{\rho^{-s} \nabla z} \le \frac{c}{1-2s} \htwonorm{z} \le \frac{c}{1-2s} \ltwonorm{f}.
	\]
	For the term involving weighted second derivatives we obtain  again by \cref{theorem:HardyType_s} since $s<\half$
	\[
	\begin{aligned}
		\ltwonorm{\rho^{-s} \nabla^2 z} &\le \frac{c}{1-2s}\left(\ltwonorm{\rho^{1-s}\nabla^2 z} +\ltwonorm{\rho^{1-s}\nabla^3 z}\right)\\
		& \le \frac{c}{1-2s}\left( \htwonorm{z} +  \ltwonorm{\rho^{1-s}\nabla^3 z}\right).
	\end{aligned}
	\]
	To estimate the term on the right-hand side involving third derivatives, we will use results from \cite{MazyaRossmann:2010}. The weighted spaces there are defined using the distance functions to the vertices and edges of the domain. Let $r$ be the distance function to the set of edges of $\Omega$. Thus, $\rho(x) \le r(x)$ for all $x \in \Omega$ and since $1-s>0$ we estimate
	\[
	\ltwonorm{\rho^{1-s}\nabla^3 z} \le \ltwonorm{r^{1-s}\nabla^3 z}.
	\]
	Next, we use estimates in weighted spaces form \cite[Lemma 4.3.1 and Corollary 4.1.10]{MazyaRossmann:2010}. This results in
	\[
	\ltwonorm{r^{1-s}\nabla^3 z} \le c \left(\ltwonorm{r^{-s}f} + \ltwonorm{r^{1-s}\nabla f} + \ltwonorm{z}\right)
	\]
	under the following conditions on the power of the weight. We choose in \cite[Lemma 4.3.1]{MazyaRossmann:2010} 
	\[
	l=3, \; p=2, \; \beta_j = 1-s, \; \delta_k = 1-s \quad \text{for all } j,k.
	\]
	We use the notation $\Lambda_v$ for the smallest positive eigenvalue of certain operator
	pencils for the vertex $v \in {\cal V}(\Omega)$, see \cite[Section 4.1.6]{MazyaRossmann:2010} for the precise definition. By the convexity of $\Omega$ we have $\Lambda_v>1$, see \cite[Section 4.3.1]{MazyaRossmann:2010}. The conditions of \cite[Lemma 4.3.1]{MazyaRossmann:2010} which we have to check are
	\[
	-1-\Lambda_v < \half +s < \Lambda_v \quad\text{and}\quad \abs{1+s} < \lambda_\Omega.
	\]
	The first condition is fulfilled by $0<s<\half$ and $\Lambda_v>1$ and the second one by $s<s_\Omega$ and the definition \eqref{eq:s_Omega} of $s_\Omega$. Please note, that \cite[Lemma 4.3.1]{MazyaRossmann:2010} provides regularity in the space $V^{3,2}_{\beta,\delta}$ in the notation of \cite{MazyaRossmann:2010}. Since all $\beta_j=\delta_k = 1-s$ the norm of this space gives a bound for the desired term $\ltwonorm{r^{1-s}\nabla^3 z}$ due to \cite[(3.12), page 90]{MazyaRossmann:2010}.
	Putting terms together  we obtain:
	\[
	\ltwonorm{\rho^{-s} \nabla^2 z} \le \frac{c}{1-2s} \left(\ltwonorm{r^{-s}f} + \ltwonorm{r^{1-s}\nabla f} + \htwonorm{z}\right).
	\]
	The $H^2$ norm of $z$ is estimated as before by the $H^2$ regularity. The term involving $\nabla f$ is obviously bounded by $\ltwonorm{\nabla f}$. To estimate $\ltwonorm{r^{-s}f}$  we use the H\"older inequality and obtain
	\[
	\begin{aligned}
		\ltwonorm{r^{-s}f}^2 &= \into r(x)^{-2s} f(x)^2 \,dx \le \norm{r^{-2s}}_{L^{\frac{3}{2}}(\Omega)} \norm{f^2}_{L^3(\Omega)}\\
		&= \left(\into r(x)^{-3s}dx\right)^{\frac{2}{3}} \norm{f}_{L^6(\Omega)}^2
		\le c \norm{f}_{H^1(\Omega)}^2,
	\end{aligned}
	\]
	where we used the embedding $H^1(\Omega) \hookrightarrow L^6(\Omega)$ as well as the fact that $r^{-3s}$ is integrable, since $r$ is the distance to the set of edges. This completes the proof.
\end{proof}

From the regularity $z \in W^{2,2}_{-s}(\Omega)$ from the previous theorem we get by \cref{PDE:theorem:normaltrace:Nsp} and \cref{PDE:theorem:Nsp_into_Wsp} that $\partial_n z \in H^{\nhalf+s}(\partial \Omega)$. Under an additional assumption on $\Omega$ (beyond convexity) we obtain a further regularity result for the normal trace.

\begin{corollary}\label{cor:normal_trace_in_H1}
Let $\lambda_\Omega > \frac{3}{2}$. Then in the setting of \cref{theorem:weighted_H2reg} there holds $\partial_n z \in H^1(F)$ for every face $F \in {\cal F}(\Omega)$ and 
\[
\norm{\partial_n z}_{H^1(F)} \le c \norm{f}_{H^1(\Omega)}
\]
with a constant $c>0$ independent of $f$.
\end{corollary}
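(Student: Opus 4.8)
The plan is to reduce the face-wise $H^1$-regularity of $\partial_n z$ to an $L^2(F)$-estimate for the trace of the Hessian of $z$, to extract the $L^2(F)$-part from the Hölder regularity of \cref{prop:Hoelder}, and to control the remaining tangential derivatives by the weighted second- and third-order estimates furnished by \cref{theorem:weighted_H2reg} and its proof. Throughout I use that $\lambda_\Omega>\frac32$ is equivalent to $\omega_\Omega<\frac23\pi$ and forces $s_\Omega=\half$ in \eqref{eq:s_Omega}, so that the conclusions of \cref{theorem:weighted_H2reg} are at our disposal for every $s<\half$.

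First I fix a face $F\in{\cal F}(\Omega)$ with constant unit normal $n_F$ and a constant orthonormal tangential frame $\tau_1,\tau_2$, and set $g:=n_F\cdot\nabla z$, viewed as a function on $\bar\Omega$. Since $F$ is flat and $n_F,\tau_j$ are constant, the tangential derivatives of the trace $g|_F$ are precisely the traces on $F$ of the second-order derivatives $\tau_j^\top\nabla^2 z\,n_F$, so that
\[
\norm{\partial_n z}_{H^1(F)}^2 \le c\left(\norm{g}_{L^2(F)}^2 + \sum_{j=1}^{2}\norm{\tau_j^\top\nabla^2 z\,n_F}_{L^2(F)}^2\right).
\]
This reduces the assertion to an $L^2(F)$-trace estimate for the Hessian of $z$. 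The $L^2(F)$-norm of $g=\partial_n z$ itself is harmless: since $f\in H^1(\Omega)\hookrightarrow L^6(\Omega)$ and $6>3$, \cref{prop:Hoelder} gives $z\in C^{1,\gamma}(\bar\Omega)$, hence $\linfnormdo{\partial_n z}\le c\honenorm{f}$, and as $F$ has finite measure the $L^2(F)$-part follows.

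It remains to bound $\norm{\tau_j^\top\nabla^2 z\,n_F}_{L^2(F)}$, for which I split $F$ into a part away from the edges and neighborhoods of the edges. Away from the edges, local elliptic regularity for $-\Lap z=f\in H^1(\Omega)$ with the homogeneous Dirichlet condition on the relatively open \emph{flat} face yields $\nabla^2 z\in H^1$ locally, whose trace lies in $H^{\nhalf}\subset L^2$. Near an edge I intend to use the combination of the weighted bound $\ltwonorm{\rho^{-s}\nabla^2 z}\le \frac{c}{1-2s}\honenorm{f}$ from \cref{theorem:weighted_H2reg} and the edge-weighted bound $\ltwonorm{r^{1-s}\nabla^3 z}\le c\honenorm{f}$ obtained in its proof (with $r$ the distance to the set of edges), both valid for all $s<\half$, together with a sharp weighted trace argument across the wedge opening.

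The \textbf{main obstacle} is exactly this last edge-trace estimate, and the hypothesis $\lambda_\Omega>\frac32$ is its borderline condition. Near an edge $e$ the leading singularity behaves like $r^{\lambda_e}$, so $\partial_n z\sim r^{\lambda_e-1}$ on $F$ and its in-face tangential derivative $\sim r^{\lambda_e-2}$, which is square-integrable over the two-dimensional face (area element $dr\,d\xi$) precisely when $2(\lambda_e-2)>-1$, i.e.\ $\lambda_e>\frac32$; this holds because $\lambda_e\ge\lambda_\Omega>\frac32$. The delicate point is that mere finiteness of the bulk weighted norms — which only requires $\lambda_e>1+s$ — does \emph{not} by itself deliver the $L^2(F)$-trace, since the face integral lacks the radial Jacobian present in the volume integral over the wedge. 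The missing half power must be recovered by integrating the stronger boundary-distance weight $\rho\approx r\sin\theta$ across the angular variable $\theta\in(0,\omega_e)$, which is admissible exactly because $s<\half$ keeps $(\sin\theta)^{-2s}$ integrable. Carrying this out rigorously without invoking an explicit singular expansion — for instance through the Kondrat'ev-type weighted estimates of \cite{MazyaRossmann:2010} applied on the face and its adjacent wedge — is the technical heart of the argument, after which summation over the finitely many faces and edges completes the proof.
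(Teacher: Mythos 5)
Your reduction of the $H^1(F)$-bound to an $L^2(F)$-trace estimate for the Hessian, and your heuristic identification of $\lambda_\Omega>\frac32$ as the correct threshold, both match the structure of the problem; but the proof has a genuine gap exactly where you acknowledge one: the near-edge trace estimate is never established, and the route you sketch for it cannot close as stated. You anchor the plan to the bulk estimates available for $s<\half$, namely $\ltwonorm{\rho^{-s}\nabla^2 z}\le \frac{c}{1-2s}\norm{f}_{H^1(\Omega)}$ and $\ltwonorm{r^{1-s}\nabla^3 z}\le c\norm{f}_{H^1(\Omega)}$, hoping to recover the ``missing half power'' by integrating in the angular variable. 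Carry that argument out: writing $\abs{\nabla^2 z}^2$ on $F$ as its value at an interior angle plus the integral of its $\theta$-derivative, using $\abs{\partial_\theta \nabla^2 z}\le r\abs{\nabla^3 z}$ and Cauchy--Schwarz (volume element $r\,dr\,d\theta\,d\xi$ versus face element $dr\,d\xi$), you arrive at a bound of the form $\norm{r^{-a-\half}\nabla^2 z}_{L^2}\,\norm{r^{a+\half}\nabla^3 z}_{L^2}$. Whatever the split $a$, at least one of the two factors carries a weight exponent $\ge\half$ (exponent $a+\half$ on $\nabla^2 z$ if $a\ge 0$, or exponent $\half-a$ past the critical value in the third-derivative weight if $a<0$), and such estimates are \emph{not} contained in \cref{theorem:weighted_H2reg}; the integrability of $(\sin\theta)^{-2s}$ is not the obstruction. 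Obtaining them forces you back into \cite[Lemma 4.3.1]{MazyaRossmann:2010} with weight exponent $s>\half$, which is admissible precisely because $\lambda_\Omega>\frac32$ relaxes the condition $\abs{1+s}<\lambda_\Omega$ --- so the decisive step is a re-derivation of the theorem's machinery in a stronger regime, not a consequence of the theorem as quoted.

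This is in fact what the paper does, and its finish differs from your angular-integration plan: it re-runs the proof of \cref{theorem:weighted_H2reg} for some $\half<s<1$, concludes $z\in V^{3,2}_{1-s}$ in the Kondrat'ev scale, so that $\nabla z$ has a trace in $V^{\nicefrac{3}{2},2}_{1-s}(F)$, i.e. $\norm{r^{1-s}D^\alpha(\partial_n z)}_{H^{\nhalf}(F)}\le c\norm{f}_{H^1(\Omega)}$ for $\abs{\alpha}=1$, and then removes the weight by the fractional Hardy inequality of \cref{theorem:HardyFractional} applied on the face --- which is legitimate exactly because the face weight exponent $1-s$ has dropped below $\half$. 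Your proposal names the right ingredients (Maz'ya--Rossmann estimates on the wedge and face) but defers the decisive estimate to them without performing it, and the regime $s<\half$ in which you plan to work is provably insufficient to produce it.
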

\begin{proof}
Inspecting the proof of \cref{theorem:weighted_H2reg} in the case of $\lambda_\Omega >\frac{3}{2}$, we obtain the existence of some $s$ in the range $\half<s<1$ (in contrast to $s<\half$ in \cref{theorem:weighted_H2reg}) with
\[
\ltwonorm{r^{1-s}\nabla^3 u} \le c  \norm{f}_{H^1(\Omega)},
\]
where $r$ is the distance function to the set of edges of $\Omega$.
The corresponding conditions from  \cite[Lemma 4.3.1]{MazyaRossmann:2010} are then still fulfilled for some $s>\half$. Also for the corresponding terms of lower order there holds
\[
\ltwonorm{r^{-s-2}u} + \ltwonorm{r^{-s-1}\nabla u} + \ltwonorm{r^{-s}\nabla^2 u} \le c  \norm{f}_{H^1(\Omega)}.
\]
The above weighted estimate for $u$ and its derivatives imply that $u \in V^{3,2}_{\beta,\delta}(\Omega)$ with $\beta_j = 1-s$ and $\delta_k = 1-s$ for all $j,k$ in the notation of \cite[Section 4.1.2]{MazyaRossmann:2010}. Thus, we obtain for every partial derivative $\partial_{x_i} u$, that $\partial_{x_i} u \in  V^{2,2}_{\beta,\delta}(\Omega)$. The trace of $\partial_{x_i} u$ on every face $F \in {\mathcal F}(\Omega)$ fulfills $\partial_{x_i} u \in V^{\nicefrac{3}{2},2}_{\beta,\delta}(F)$, see \cite[Section 4.1.5]{MazyaRossmann:2010}. This directly implies $\partial_n u \in V^{\nicefrac{3}{2},2}_{\beta,\delta}(F)$. Since the weights $\beta_j$ and $\delta_k$ are equal, the above spaces correspond to the spaces with the only weight $r$ (distance to the edges) and one obtains $\partial_n u \in V^{\nicefrac{3}{2},2}_{1-s}(F)$ in the notation of \cite[Section 2.1.5]{MazyaRossmann:2010}. By \cite[Lemma 2.1.10]{MazyaRossmann:2010}, where an equivalent norm of $V^{\nicefrac{3}{2},2}_{1-s}(F)$ is presented, we get for every multi-index $\alpha$ with $\abs{\alpha}=1$
\[
\norm{r^{1-s}D^\alpha(\partial_n u)}_{H^{\nhalf}(F)} \le \norm{\partial_n u}_{V^{\nicefrac{3}{2},2}_{1-s}(F)} \le c  \norm{f}_{H^1(\Omega)}.
\]
Since on $F$ the distance $r$ describes the distance to the boundary of $F$ we can apply the fractional Hardy inequality from \cref{theorem:HardyFractional} leading to
\[
\norm{D^\alpha(\partial_n u)}_{L^2(F)} = \norm{r^{-(1-s)}r^{1-s}D^\alpha(\partial_n u)}_{L^2(F)} \le c\norm{r^{1-s}D^\alpha(\partial_n u)}_{H^{1-s}(F)},
\]
where we used $1-s<\half$. Thus, we get
\[
\begin{aligned}
	\norm{D^\alpha(\partial_n u)}_{L^2(F)} &\le c\norm{r^{1-s}D^\alpha(\partial_n u)}_{H^{1-s}(F)} \le c\norm{r^{1-s}D^\alpha(\partial_n u)}_{H^{\nhalf}(F)} \le c  \norm{f}_{H^1(\Omega)}.
\end{aligned}
\]
This results in $\partial_n u \in H^1(F)$ and the desired estimate holds.
\end{proof}

\section{Optimal control problem}\label{sec3}
In this section we discuss the optimal control problem \eqref{DirichletCon:eq:problem} and provide optimality conditions. Throughout we make the minimal assumption $u_d \in L^2(\Omega)$. We consider the formulation based on the control space $Q = L^2(\partial \Omega)$ corresponding to the $L^2(\partial \Omega)$ regularization term in the cost function \eqref{DirichletCon:eq:cost}. Other choices are possible. We refer to \cite{GunzburgerHouSvobodny:1991} for $H^1$ type regularization, to \cite{OfPhanSteinbach:2015,ChowdhuryGudiNandakumaran:2017,JohnSwierczynskiWohlmuth:2018,Winkler:2020} for $H^{\nhalf}$ (energy type) regularization, to \cite{Mateos:2021} for a sparsity promoting regularization, to \cite{Vexler:2007} for a problem with a finite dimensional control space and to \cite{KunischVexler:2007} for brief overview of possible formulations.

The set of admissible controls is defined by
\[
\Qad = \Set{q \in Q | q_a \le q(s) \le q_b \text{ for almost all } s \in \partial \Omega}.
\]
For simplicity of presentation and in order to avoid case distinctions we assume throughout $q_a,q_b\in\R$ satisfying
\begin{equation}\label{ass:qa_qb}
q_a < 0 < q_b.
\end{equation}
However, other choices are possible, which eventually lead to more regular solutions, see also the discussion in \cite{ApelMateosPfeffererRoesch:2015,ApelMateosPfeffererRoesch:2018}. The result from \cref{theorem:state} provides the existence of a (linear and continuous) solution operator $S \colon Q \to H^{\nhalf}(\Omega)$ with $S \colon q \mapsto u = u(q)$ solving \eqref{PDE:eq:very_weak_inhDirichlet}. Note, that due to $q\in L^2(\partial \Omega)$ the duality pairing in \eqref{PDE:eq:very_weak_inhDirichlet} can be replaced by the $L^2(\partial \Omega)$ inner product  $(q,\partial_n \phi)_{\partial \Omega}$.

 Using this solution operator we define the reduced cost functional
\[
j \colon Q \to R, \quad j(q) = J(q,Sq).
\]
Thus, we formulate the optimal control problem as
 \begin{equation}\label{DirichletCon:eq:reduced_problem}
	\text{minimize } j(q), \quad q \in \Qad.
\end{equation}
It is straightforward to check, that the reduced cost functional $j$ is continuous and strictly convex. Thus, the existence and uniqueness of an optimal solution $\oq \in \Qad$ follows by standard arguments. We refer to $\ou = S \oq$ as the optimal state.

The functional $j$ is two times Fr\'echet differentiable. For a control $q \in Q$ and a direction $\dq \in Q$ the directional derivatives are
given by
\[
j'(q)(\dq) = (u-u_d,\du) + \alpha(q,\dq)_{\partial\Om}
\]
and
\[
j''(q)(\dq,\dq) = \ltwonorm{\du}^2 + \alpha \ltwonormdo{\dq}^2,
\]
where $u = Sq$ and $\du = S\dq$. 

The necessary optimality condition for \eqref{DirichletCon:eq:reduced_problem} is given as a variational inequality: 
	\begin{equation}\label{eq:opt_cond}
\oq \in \Qad \quad:\quad	j'(\oq)(\dq-\oq) \ge 0 \quad \text{for all }\dq \in \Qad.
	\end{equation}
By convexity of $j$ this condition is also sufficient for the optimality.

To derive the optimality system we require an adjoint based representation of $j'(q)(\dq)$. To this end we introduce the corresponding adjoint equation.  For a given control $q\in Q$ and corresponding state $u = u(q) \in H^{\nhalf}(\Omega)$
we introduce the adjoint state $z = z(q) \in H^2(\Omega) \cap H^1_0(\Omega)$ as the weak solution of the adjoint equation
\[
\begin{aligned}
	-\Lap z &= u-u_d &&\quad\text{in } \Omega,\\
	z &=0 &&\quad\text{on } \partial \Omega
\end{aligned}
\]
with the corresponding weak formulation
\begin{equation}\label{DirichletCon:eq:adjoint_weak}
	z \in H^1_0(\Omega) \quad:\quad (\nabla \phi,\nabla z) = (u-u_d,\phi) \quad \text{for all } \phi \in H^1_0(\Omega).
\end{equation}

A straightforward calculation provides an expressions for the directional derivative of $j$ based on the solution of this adjoint equation, cf., e.g., \cite{CasasRaymond:2006}:
\[
j'(q)(\dq) = (\alpha q - \partial_n z,\dq)_{\partial \Omega},
\]
where  $q,\dq \in Q$, $u=u(q)\in H^{\nhalf}(\Omega)$ and $z=z(q)\in H^2(\Omega) \cap H^1_0(\Omega)$ solves the adjoint equation \eqref{DirichletCon:eq:adjoint_weak}.
\begin{remark}\label{DirichletCon:remark:dj_rep_domain}
	Let $\dq$ possess additional regularity $\dq \in H^{\nhalf}(\partial \Omega)$ and $B \colon H^{\nhalf}(\partial \Omega) \to H^1(\Omega)$ be an arbitrary extension operator, i.e. fulfilling $\tau B \dq = \dq$ with the trace operator $\tau \colon H^1(\Omega) \to H^{\nhalf}(\partial \Omega)$. Then the derivative $j'(q)(\dq)$ can be also expressed as
	\[
	j'(q)(\dq) = (u-u_d,B\dq) - (\nabla B\dq, \nabla z) + \alpha
	(q,\dq)_{\partial \Omega}.
	\]
	This expression follows from
	\[
	\begin{aligned}
		(u-u_d,\du) &= (u-u_d,\du-B\dq) + (u-u_d,B\dq)\\ 
		&= (\nabla z,\nabla(\du-B\dq)) + (u-u_d,B\dq)\\
		&= - (\nabla z,\nabla B\dq) + (u-u_d,B\dq),
	\end{aligned}
	\]
	where we used that for $\dq \in H^{\nhalf}(\partial \Omega)$ the solution $\du = S \dq$ fulfills the weak formulation
	\[
	\du \in B \dq + H^1_0(\Omega) \quad:\quad (\nabla \du,\nabla \phi) = 0 \quad \text{for all } \phi \in H^1_0(\Omega).
	\]
	On the discrete level we will use a corresponding expression for the derivative of the discrete reduced cost functional.
\end{remark}

Due to the structure of the admissible set $\Qad$ the optimality condition \cref{eq:opt_cond} can be equivalently rewritten as
\begin{equation}\label{eq:opt_cond_proj}
\oq = P_{[q_a,q_b]}\left(\frac{1}{\alpha} \partial_n \oz\right),
\end{equation}
where $\oz = z(\oq)$ is the optimal adjoint state and $P_{[q_a,q_b]}$ is the pointwise projection on the interval $[q_a,q_b]$. Since $\oz \in H^2(\Omega)\cap H^1_0(\Omega)$, $\partial_n \oz \in H^{\nhalf}(\partial \Omega)$ by \cref{PDE:theorem:normaltrace_estimate}, and since this projection is well defined as self-mapping of $H^{\nhalf}(\partial \Omega)$, cf. \cite[Lemma 3.3]{KunischVexler:2007}, we obtain additional regularity for the optimal control $\oq \in H^{\nhalf}(\partial \Omega)$. This allows to use the classical weak formulation of the state equation in the following optimality system.

\begin{theorem}\label{DirichletCon:theorem:opt_sys}
	A control $\oq\in \Qad$ is the solution of the optimal control problem
	\eqref{DirichletCon:eq:reduced_problem} if and only if the triple $(\oq,\ou,\oz)$
	with $\oq \in H^{\nhalf}(\partial \Omega)\cap \Qad$, $\ou \in B \oq + H^1_0(\Omega)$ with an arbitrary extension operator $B\colon H^{\nhalf}(\partial \Omega) \to H^1(\Omega)$ and $\oz \in H^2(\Omega)\cap H^1_0(\Omega)$ fulfills the following system:
	\begin{subequations}\label{DirichletCon:eq:opt_sys}
		\begin{itemize}
			\item State equation:
			\begin{equation}\label{DirichletCon:eq:opt_sys_state}
				(\nabla \ou,\nabla\phi)=0\quad\text{for all }\phi\in H^1_0(\Omega)
			\end{equation}
			\item Adjoint equation:
			\begin{equation}\label{DirichletCon:eq:opt_sys_adjoint}
				(\nabla \oz,\nabla \phi) =(\ou-u_d,\phi) \quad\text{for all }\phi\in H^1_0(\Omega)\\
			\end{equation}
			\item Optimality condition:
			\begin{equation}\label{DirichletCon:eq:opt_sys_gradient}
				(\alpha \oq - \partial_n \oz,\dq - \oq)_{\partial \Omega} \ge 0 \quad \text{for all } \dq \in \Qad.		
			\end{equation}
		\end{itemize}
	\end{subequations}
Moreover, there exists a constant $c>0$ only depending on $\alpha$ and $\Omega$ such that
\[
\norm{\oq}_{H^{\nhalf}(\partial \Omega)} + \norm{\ou}_{H^1(\Omega)} +  \norm{\oz}_{H^2(\Omega)} \le c \ltwonorm{u_d}.
\]
\end{theorem}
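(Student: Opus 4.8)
The plan is to split the statement into the equivalence and the stability estimate, treating the equivalence as a collection of facts already in place and spending the real effort on the estimate.

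For the equivalence I would simply assemble \eqref{eq:opt_cond}, the adjoint representation of $j'$, and the definitions of state and adjoint. By \eqref{eq:opt_cond} together with the convexity of $j$, a control $\oq \in \Qad$ solves \eqref{DirichletCon:eq:reduced_problem} if and only if $j'(\oq)(\dq - \oq) \ge 0$ for all $\dq \in \Qad$. Substituting $j'(\oq)(\dq) = (\alpha\oq - \partial_n\oz, \dq)_{\partial\Omega}$ turns this into \eqref{DirichletCon:eq:opt_sys_gradient}, and \eqref{DirichletCon:eq:opt_sys_state}, \eqref{DirichletCon:eq:opt_sys_adjoint} are the defining weak formulations of $\ou = S\oq$ and $\oz = z(\oq)$. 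The regularity $\oq \in H^{\nhalf}(\partial\Omega)$ needed to write the state equation classically has already been extracted from the projection identity \eqref{eq:opt_cond_proj} via the self-mapping property of $P_{[q_a,q_b]}$ on $H^{\nhalf}(\partial\Omega)$.

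For the stability estimate the naive chain of bounds is circular: controlling $\oq$ controls $\ou$, hence $\oz$, hence $\partial_n\oz$, which through \eqref{eq:opt_cond_proj} feeds back into $\oq$. The hard part will be breaking this loop, and I would do so with an energy comparison. Since $q_a < 0 < q_b$ we have $0 \in \Qad$ and $S0 = 0$, so optimality yields
\[
\half\ltwonorm{\ou - u_d}^2 + \frac{\alpha}{2}\ltwonormdo{\oq}^2 = j(\oq) \le j(0) = \half\ltwonorm{u_d}^2,
\]
whence $\ltwonorm{\ou - u_d} \le \ltwonorm{u_d}$. Crucially this bounds the adjoint data without reference to $\honenorm{\ou}$.

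The rest is then a one-directional chain. Elliptic $H^2$ regularity on the convex domain applied to \eqref{DirichletCon:eq:opt_sys_adjoint} gives $\htwonorm{\oz} \le c\ltwonorm{\ou - u_d} \le c\ltwonorm{u_d}$; the normal trace theorem \cref{PDE:theorem:normaltrace_estimate} with $p = 2$ gives $\norm{\partial_n\oz}_{H^{\nhalf}(\partial\Omega)} \le c\htwonorm{\oz} \le c\ltwonorm{u_d}$; since $P_{[q_a,q_b]}(0) = 0$ and $P_{[q_a,q_b]}$ is bounded on $H^{\nhalf}(\partial\Omega)$, the identity \eqref{eq:opt_cond_proj} gives $\norm{\oq}_{H^{\nhalf}(\partial\Omega)} \le c\,\norm{\partial_n\oz}_{H^{\nhalf}(\partial\Omega)} \le c\ltwonorm{u_d}$; and finally the $H^{\nhalf}$ continuity of $S$ from \cref{theorem:state} gives $\honenorm{\ou} \le c\norm{\oq}_{H^{\nhalf}(\partial\Omega)} \le c\ltwonorm{u_d}$. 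Summing the three bounds gives the assertion, with $c$ depending only on $\alpha$ and $\Omega$. The genuine obstacle throughout is only the circular dependence; once decoupled by the energy estimate, every remaining inequality is supplied by results already proven in the excerpt.
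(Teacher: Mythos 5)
Your proposal is correct, and it actually supplies more than the paper does: the paper states this theorem without a proof environment, treating it as a summary of the preceding discussion (the variational inequality \eqref{eq:opt_cond}, the adjoint representation of $j'$, the projection identity \eqref{eq:opt_cond_proj} together with the $H^{\nhalf}(\partial\Omega)$ self-mapping property of $P_{[q_a,q_b]}$), and it never writes out a derivation of the a priori bound. Your assembly of the equivalence matches exactly what the paper intends. The genuinely added value is your treatment of the estimate: you correctly identify the circular dependence $\oq \to \ou \to \oz \to \partial_n\oz \to \oq$ and break it with the energy comparison $j(\oq)\le j(0)$, which is legitimate because assumption \eqref{ass:qa_qb} guarantees $0\in\Qad$ and $S0=0$, giving $\ltwonorm{\ou-u_d}\le\ltwonorm{u_d}$ independently of any bound on $\oq$. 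From there your chain is watertight: convex-domain $H^2$ regularity for the adjoint, \cref{PDE:theorem:normaltrace_estimate} with $p=2$ for $\partial_n\oz$, the Lipschitz/self-mapping property of the projection (with $P_{[q_a,q_b]}(0)=0$, again using \eqref{ass:qa_qb}) for $\oq$, and \cref{theorem:state}(3) for $\ou$, with the $\alpha$-dependence entering only through the projection step. One could alternatively break the loop by testing the variational inequality with $\dq=0$, but your route via $j(\oq)\le j(0)$ is the standard one and is exactly what the paper's unwritten argument must amount to.
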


\begin{remark}
The regularity from th previous theorem can be extended to $\oq \in W^{1-\frac{1}{p},p}(\partial \Omega)$ provided $u_d \in L^p(\Omega)$ with $2 \le p < p_\Omega$ and $p_\Omega >2$ being the critical value for $W^{2,p}$ regularity. The analysis in \cite{CasasRaymond:2006} and \cite{MayRannacherVexler:2013} is based on this regularity setting.
\end{remark}

For the two dimensional case the optimal control $\oq$ is known to vanish at vertices of $\partial \Omega$. We extend this result to our three-dimensional setting. In particular, we prove that the optimal control is equal to zero on all vertices and edges of $\partial \Omega$. This is especially required when proving the $H^1(\partial\Omega)$ regularity in \cref{DirichletCon:theorem:W22-s_oz_with_s_to_12}.

\begin{theorem}\label{DirichletCon:theorem:q=0_on_edges}
	Let $u_d \in L^p(\Omega)$ with some $p>3$. Let $\oq$ be the optimal control of \eqref{DirichletCon:eq:reduced_problem}, $\ou$ be the corresponding state and $\oz$ the corresponding adjoint state. Then, we have $\oz \in C^{1,\gamma}(\bar \Omega)$ with some $\gamma >0$, and $\partial_n\oz,\oq \in C(\partial \Omega)$. Moreover, for every two faces $F,G \in {\mathcal F}(\partial
	\Omega)$, $F \neq G$ with non-empty intersection, i.e.,
	$
	\bar F \cap \bar G \neq \emptyset,
	$
	there holds
	\[
	\partial_n\oz(s)=\oq(s) = 0 \quad \text{for all } s \in \bar F \cap \bar G.
	\]
\end{theorem}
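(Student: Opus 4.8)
The plan is to first upgrade the regularity of the adjoint state $\oz$ to $C^{1,\gamma}(\bar\Omega)$, and then to exploit the homogeneous Dirichlet condition together with this regularity to pin down $\nabla\oz$ on the union of all edges and vertices. First I would check that the right-hand side $\ou-u_d$ of the adjoint equation lies in $L^{\tilde p}(\Omega)$ for some $\tilde p>3$. By \cref{DirichletCon:theorem:opt_sys} we have $\ou\in H^1(\Omega)$, hence $\ou\in L^6(\Omega)$ by the Sobolev embedding $H^1(\Omega)\hookrightarrow L^6(\Omega)$ in three dimensions, while $u_d\in L^p(\Omega)$ with $p>3$ by assumption. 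Thus $\ou-u_d\in L^{\tilde p}(\Omega)$ with $\tilde p=\min(6,p)>3$, and \cref{prop:Hoelder} applied to the adjoint equation yields $\oz\in C^{1,\gamma}(\bar\Omega)$ for some $\gamma>0$. This is the first assertion, and in particular $\nabla\oz$ is continuous up to the boundary.

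The heart of the argument is a geometric observation. Let $F\in\mathcal{F}(\partial\Omega)$ be a face with constant outer normal $n_F$. Since $\oz$ vanishes on $\partial\Omega$, in particular $\oz|_F\equiv 0$, every tangential derivative of $\oz$ along $F$ vanishes, and by the $C^1$ regularity this extends to all of $\bar F$. Consequently $\nabla\oz(s)=(\nabla\oz(s)\cdot n_F)\,n_F$ for every $s\in\bar F$, i.e. $\nabla\oz$ is parallel to $n_F$ on $\bar F$. Now take two distinct faces $F,G$ with $\bar F\cap\bar G\neq\emptyset$ and fix $s\in\bar F\cap\bar G$. Applying this identity on both faces gives $(\nabla\oz(s)\cdot n_F)\,n_F=\nabla\oz(s)=(\nabla\oz(s)\cdot n_G)\,n_G$. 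Since $F$ and $G$ are distinct faces of a convex polyhedron sharing the point $s$, their outer normals $n_F$ and $n_G$ are linearly independent (they can be neither parallel, which would make the faces coincide, nor anti-parallel, which would make them disjoint), exactly as in the proof of \cref{PDE:theorem:Nsp_into_Wsp}. Hence the two representations force $\nabla\oz(s)\cdot n_F=\nabla\oz(s)\cdot n_G=0$, so $\nabla\oz(s)=0$ and in particular $\partial_n\oz(s)=0$ for all $s\in\bar F\cap\bar G$.

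It then remains to collect the continuity statements and transfer the vanishing to $\oq$. On each closed face $\bar F$ the function $\partial_n\oz=\nabla\oz\cdot n_F$ is continuous, since $\nabla\oz\in C(\bar\Omega)$ and $n_F$ is constant. Across an edge shared by two faces $F$ and $G$, the two face-wise limits of $\partial_n\oz$ agree because both equal zero there by the previous step; hence $\partial_n\oz\in C(\partial\Omega)$. The projected optimality condition \eqref{eq:opt_cond_proj}, namely $\oq=P_{[q_a,q_b]}(\tfrac{1}{\alpha}\partial_n\oz)$, then shows that $\oq$ is continuous as the composition of the Lipschitz projection with the continuous function $\partial_n\oz$. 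Finally, on $\bar F\cap\bar G$ we have $\partial_n\oz=0$, and since $q_a<0<q_b$ by \eqref{ass:qa_qb} the projection of $0$ equals $0$, whence $\oq=0$ on $\bar F\cap\bar G$, which is the last claim.

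The main obstacle is the geometric step of the second paragraph: recognizing that the vanishing Dirichlet trace forces $\nabla\oz$ to be normal to each adjacent face simultaneously, and then invoking linear independence of neighboring face normals to annihilate the gradient on the whole skeleton. The regularity input ($C^{1,\gamma}$ up to the boundary) and the projection argument for $\oq$ are routine by comparison.
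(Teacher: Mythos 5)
Your proposal is correct and follows essentially the same route as the paper's own proof: upgrade $\oz$ to $C^{1,\gamma}(\bar\Omega)$ via \cref{prop:Hoelder} (using $\ou-u_d\in L^r(\Omega)$, $r>3$), kill the tangential derivatives on each face, use linear independence of $n_F$ and $n_G$ to conclude $\nabla\oz=0$ on $\bar F\cap\bar G$, and transfer the result to $\oq$ through the projection formula \eqref{eq:opt_cond_proj} and $q_a<0<q_b$. Your write-up is merely more explicit in spelling out the exponent $\tilde p=\min(6,p)$, the ``gradient parallel to both normals'' formulation of the geometric step, and the face-wise matching argument for continuity of $\partial_n\oz$ across edges.
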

\begin{proof}
	From \cref{DirichletCon:theorem:opt_sys} we know that $\ou \in H^1(\Omega)$ and therefore we have for the right-hand side of the adjoint equation \eqref{DirichletCon:eq:opt_sys_adjoint} $\ou-u_d \in L^r(\Omega)$ with some $r>3$. Thus, by \cref{prop:Hoelder}, we get $\oz \in C^{1,\gamma}(\bar \Omega)$ with some $\gamma>0$ and especially $\nabla \oz$ is continuous on $\partial \Omega$. Since $\oz=0$ on $F$ and on $G$, we have that all tangential derivatives of $\oz$ vanish on $F$ and on $G$ and by continuity on $\bar F \cap \bar G$. Since the normal vectors $n_F$ and $n_G$ are linearly independent, this implies that $\nabla \oz$ vanishes on $\bar F \cap \bar G$. This results in $\partial_n \oz = 0$ on $\bar F \cap \bar G$ and the continuity of $\partial_n \oz$ on $\partial \Omega$. The same result for the optimal control $\oq$ follows from the optimality condition \eqref{eq:opt_cond_proj} taking into account the assumption \eqref{ass:qa_qb}, i.e. $q_a < 0 <q_b$. This completes the proof.
\end{proof} 

Based on \cref{theorem:weighted_H2reg} we provide additional regularity for the optimal control, state, and adjoint state assuming $u_d \in H^1(\Omega)$. 

\begin{theorem}\label{theorem:W22-s_oz}
	Let $u_d \in H^1(\Omega)$ and $0<s<s_\Omega$, where $s_\Omega$ is defined in  \eqref{eq:s_Omega}. Let $(\oq,\ou,\oz)$ be the solution of the optimality system
	\cref{DirichletCon:eq:opt_sys}. Then, there holds $\oz \in W^{2,2}_{-s}(\Omega)$, $\oq \in H^{\nhalf+s}(\partial \Omega)$, and $\ou \in H^{1+s}(\Omega)$. There is a constant $c>0$ independent of $u_d$ such that
	\[
	\norm{\oz}_{W^{2,2}_{-s}(\Omega)} + 
	\norm{\oq}_{H^{\nhalf+s}(\partial \Omega)} + \norm{\partial_n \oz}_{H^{\nhalf+s}(\partial \Omega)} + \norm{\ou}_{H^{1+s}(\Omega)}\le c \norm{u_d}_{H^1(\Omega)}.
	\]
\end{theorem}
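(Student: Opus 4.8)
The plan is to propagate regularity through the optimality system \cref{DirichletCon:eq:opt_sys} in the order adjoint state $\to$ normal trace $\to$ control $\to$ state, using the weighted $H^2$ regularity of \cref{theorem:weighted_H2reg} as the engine and keeping track of how constants depend on $s$. The point to watch is that the whole chain must be started from regularity that is already available, so that no circular argument arises.

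First I would observe that the right-hand side of the adjoint equation \cref{DirichletCon:eq:opt_sys_adjoint} already lies in $H^1(\Omega)$: by \cref{DirichletCon:theorem:opt_sys} we have $\ou \in H^1(\Omega)$ with $\honenorm{\ou} \le c\ltwonorm{u_d}$, and by assumption $u_d \in H^1(\Omega)$, so $f := \ou - u_d \in H^1(\Omega)$ with $\honenorm{f} \le c\norm{u_d}_{H^1(\Omega)}$. Since this bound uses only the $H^1$ regularity of $\ou$ already guaranteed by the optimality system, there is no circularity. Applying \cref{theorem:weighted_H2reg} with this $f$ and the given $0 < s < s_\Omega \le \tfrac12$ then yields $\oz \in W^{2,2}_{-s}(\Omega)$ together with $\norm{\oz}_{W^{2,2}_{-s}(\Omega)} \le \frac{c}{1-2s}\norm{u_d}_{H^1(\Omega)}$.

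Next I would pass to the normal trace. Since $\oz \in W^{2,2}_{-s}(\Omega)\cap H^1_0(\Omega)$ and $-\tfrac12 < -s < \tfrac12$ for $0<s<\tfrac12$, \cref{PDE:theorem:normaltrace:Nsp} applied with $p=2$ and weight exponent $-s$ (whence target exponent $1-(-s)-\tfrac12 = \tfrac12+s$) gives $\partial_n \oz \in N^{\nicefrac12+s,2}(\partial\Omega)$, and the embedding of \cref{PDE:theorem:Nsp_into_Wsp} yields $\partial_n \oz \in H^{\nicefrac12+s}(\partial\Omega)$ with $\norm{\partial_n \oz}_{H^{\nicefrac12+s}(\partial\Omega)} \le c\norm{\oz}_{W^{2,2}_{-s}(\Omega)}$. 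For the control I would use the projection formula \cref{eq:opt_cond_proj}, $\oq = P_{[q_a,q_b]}(\tfrac1\alpha \partial_n \oz)$. Because $\tfrac12+s<1$, composition with the one-Lipschitz map $P_{[q_a,q_b]}$ dominates the Gagliardo seminorm of $H^{\nicefrac12+s}(\partial\Omega)$ pointwise, and since $0\in[q_a,q_b]$ by \cref{ass:qa_qb} we have $P_{[q_a,q_b]}(0)=0$, which controls the $L^2$ part; hence $\oq \in H^{\nicefrac12+s}(\partial\Omega)$ with $\norm{\oq}_{H^{\nicefrac12+s}(\partial\Omega)} \le \frac{c}{\alpha}\norm{\partial_n \oz}_{H^{\nicefrac12+s}(\partial\Omega)}$.

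Finally, for the state $\ou = S\oq$ I would invoke a regularity shift for the harmonic Dirichlet extension on the convex polyhedron: lifting $\oq$ by a bounded right inverse of the trace $H^{1+s}(\Omega)\to H^{\nicefrac12+s}(\partial\Omega)$ and solving the resulting homogeneous-Dirichlet problem, one gets $\ou \in H^{1+s}(\Omega)$ provided $1+s$ stays below the critical edge exponent $\lambda_\Omega$. This is exactly guaranteed by $s<s_\Omega\le\lambda_\Omega-1$; moreover $1+s<\tfrac32$ keeps the datum in the regime $H^{\nicefrac12+s}(\partial\Omega)$ with $\nicefrac12+s<1$, where no edge compatibility conditions are imposed, so the shift applies directly without needing the edge-vanishing of \cref{DirichletCon:theorem:q=0_on_edges}. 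I expect this last step to be the main obstacle, as it requires pinning down the correct fractional regularity theorem for the Laplacian on polyhedral domains and checking that the vertex pencils $\Lambda_v>1$ (convexity) do not bind before $\lambda_\Omega$. Chaining the four estimates, with the fixed $s<s_\Omega$ absorbing the factor $\tfrac{1}{1-2s}$ into $c$, then delivers the asserted combined bound by $\norm{u_d}_{H^1(\Omega)}$.
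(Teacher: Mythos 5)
Your proposal follows the paper's proof almost verbatim for three of the four assertions: start from $\ou-u_d\in H^1(\Omega)$ (bounded via \cref{DirichletCon:theorem:opt_sys}, so no circularity), feed it into \cref{theorem:weighted_H2reg} to get $\oz\in W^{2,2}_{-s}(\Omega)$, then obtain $\partial_n\oz\in H^{\nhalf+s}(\partial\Omega)$ via \cref{PDE:theorem:normaltrace:Nsp} (with weight exponent $-s$, admissible since $s<\half$) and the embedding \cref{PDE:theorem:Nsp_into_Wsp}, and pass to $\oq$ through the projection formula \cref{eq:opt_cond_proj}. For the projection step the paper cites a known result (Lemma 3.3 of Kunisch--Vexler) that $P_{[q_a,q_b]}$ is a bounded self-map of $H^{\nhalf+s}(\partial\Omega)$, whereas you prove it directly from the pointwise Lipschitz bound and $P_{[q_a,q_b]}(0)=0$; this is a correct, self-contained substitute, and your remark that it hinges on $\nhalf+s<1$ (Gagliardo seminorm) is exactly the right caveat.

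The genuine divergence is the state regularity $\ou\in H^{1+s}(\Omega)$. The paper interpolates the harmonic-extension operator between its two known mapping properties, $H^{\nhalf}(\partial\Omega)\to H^1(\Omega)$ and $H^1(\partial\Omega)\to H^{\nicefrac{3}{2}}(\Omega)$ (the latter from Jerison--Kenig, valid on any Lipschitz, hence any convex polyhedral, domain), which immediately gives $H^{\nhalf+s}(\partial\Omega)\to H^{1+s}(\Omega)$ with no reference to edge or vertex exponents whatsoever. Your lifting-plus-shift argument can be made to work, but the condition you invoke, $1+s<\lambda_\Omega$, is not the operative threshold: below $H^{\nicefrac{3}{2}}(\Omega)$ nothing about the polyhedral geometry binds, so in this step only $s<\half$ matters, and the constraint $s<\lambda_\Omega-1$ in the definition \eqref{eq:s_Omega} of $s_\Omega$ is consumed entirely by \cref{theorem:weighted_H2reg} (via the Maz'ya--Rossmann weighted estimates), not here. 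Since you yourself flag this step as ``the main obstacle'' and leave the precise regularity theorem unidentified, this is the one place where your argument is incomplete as written; replacing it with the paper's two-endpoint interpolation closes it cleanly and makes the Kondratiev-type pencil discussion ($\Lambda_v>1$ versus $\lambda_\Omega$) unnecessary.
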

\begin{proof}
	From \cref{DirichletCon:theorem:opt_sys}, we obtain $\ou \in H^1(\Omega)$
	and the corresponding estimate. Thus, it holds $\ou-u_d \in H^1(\Omega)$ with
	\[
	\norm{\ou-u_d}_{H^1(\Omega)} \le c \norm{u_d}_{H^1(\Omega)}.
	\]
	This allows for the application of the weighted regularity result from \cref{theorem:weighted_H2reg} leading to $\oz \in W^{2,2}_{-s}(\Omega) \cap H^1_0(\Omega)$ and the estimate
	\[
	\norm{\oz}_{W^{2,2}_{-s}(\Omega)} \le c \norm{\ou-u_d}_{H^1(\Omega)} \le c \norm{u_d}_{H^1(\Omega)}.
	\]
	Application of the trace estimate from \cref{PDE:theorem:normaltrace:Nsp} leads to $\partial_n \oz \in N^{\nhalf+s,2}(\partial \Omega)$ and the embedding $N^{\nhalf+s,2}(\partial \Omega) \hookrightarrow H^{\nhalf+s}(\partial \Omega)$ from \cref{PDE:theorem:Nsp_into_Wsp} results then in $\partial_n \oz \in H^{\nhalf+s}(\partial \Omega)$ with
	\[
	\norm{\oq}_{H^{\nhalf+s}(\partial \Omega)} \le c \norm{\partial_n \oz}_{H^{\nhalf+s}(\partial \Omega)} \le c \norm{\oz}_{W^{2,2}_{-s}(\Omega)} \le c \norm{u_d}_{H^1(\Omega)},
	\]
	where we used, that the projection $P_{[q_a,a_b]}$ is well defined as a self-mapping of $H^{\nhalf+s}(\partial \Omega)$, cf. \cite[Lemma 3.3]{KunischVexler:2007}. The regularity of $\ou$ is now obtained by an interpolation argument. For Dirichlet data from $H^{\nhalf}(\partial \Omega)$ the harmonic extension is in $H^1(\Omega)$ and for Dirichlet data from $H^1(\partial\Omega)$ it lies in $H^{\nicefrac{3}{2}}(\Omega)$, see \cite[Theorem 5.15 (b), $p=2$]{JerisonKenig:1995}. Thus, by interpolation we get
	\[
	\norm{\ou}_{H^{1+s}(\Omega)}\le c \norm{\oq}_{H^{\nhalf+s}(\partial \Omega)} \le c \norm{u_d}_{H^1(\Omega)}.
	\]
	This completes the proof.
\end{proof}

Under an additional assumption on $\Omega$ (beyond convexity) we obtain a further regularity result.

\begin{theorem}\label{DirichletCon:theorem:W22-s_oz_with_s_to_12}
	Let $u_d \in H^1(\Omega)$ and let $\Omega$ be such that $\lambda_\Omega > \frac{3}{2}$, where $\lambda_\Omega$ is the critical exponent defined in \eqref{eq:lambda_Omega}. Further, let $(\oq,\ou,\oz)$ be the solution of the optimality system
	\cref{DirichletCon:eq:opt_sys}. Then, for $0<s<\frac12$ there holds
	\[
	\norm{\oz}_{W^{2,2}_{-s}(\Omega)} \le \frac{c}{1-2s}\norm{u_d}_{H^1(\Omega)}
	\]
	with a constant $c$ independent of $s$ and $u_d$. Moreover, we have $\oq,\partial_n \oz \in H^1(\partial \Omega)$ and the estimate
	\[
	\norm{\oq}_{H^1(\partial \Omega)} + \norm{\partial_n \oz}_{H^1(\partial \Omega)} \le c \norm{u_d}_{H^1(\Omega)}
	\]
	holds with a constant $c>0$ independent of $u_d$.
\end{theorem}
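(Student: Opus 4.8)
The plan is to treat the two assertions in turn, exploiting the hypothesis $\lambda_\Omega>\frac32$ in two distinct ways: first to enlarge the admissible range of weight exponents, and then to unlock the face-wise $H^1$ regularity of the normal trace provided by \cref{cor:normal_trace_in_H1}.

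For the weighted bound, the first observation is that $\lambda_\Omega>\frac32$ forces $\lambda_\Omega-1>\frac12$, so that $s_\Omega=\min(\lambda_\Omega-1,\half)=\half$ in \eqref{eq:s_Omega}. Hence \cref{theorem:weighted_H2reg} becomes applicable for every $0<s<\frac12$. Since $\oz$ solves \eqref{eq:z_rhs_f} with right-hand side $f=\ou-u_d$, and since \cref{DirichletCon:theorem:opt_sys} gives $\ou\in H^1(\Omega)$ with $\norm{\ou}_{H^1(\Omega)}\le c\ltwonorm{u_d}$, we have $\ou-u_d\in H^1(\Omega)$ with $\norm{\ou-u_d}_{H^1(\Omega)}\le c\norm{u_d}_{H^1(\Omega)}$. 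Feeding this into \cref{theorem:weighted_H2reg} directly yields $\norm{\oz}_{W^{2,2}_{-s}(\Omega)}\le\frac{c}{1-2s}\norm{u_d}_{H^1(\Omega)}$ with $c$ depending only on $\Omega$, i.e. independent of $s$ and $u_d$.

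For the second assertion I would establish $\partial_n\oz\in H^1(\partial\Omega)$ by gluing face-wise regularity across edges, using two ingredients. First, applying \cref{cor:normal_trace_in_H1} with $f=\ou-u_d$ gives $\partial_n\oz\in H^1(F)$ together with $\norm{\partial_n\oz}_{H^1(F)}\le c\norm{u_d}_{H^1(\Omega)}$ for every face $F\in\mathcal F(\Omega)$. Second, because $u_d\in H^1(\Omega)\hookrightarrow L^6(\Omega)$ in three dimensions and $6>3$, the hypothesis of \cref{DirichletCon:theorem:q=0_on_edges} is met with $p=6$, so $\partial_n\oz=0$ on $\bar F\cap\bar G$ for every pair of adjacent faces; that is, $\partial_n\oz$ vanishes on the entire edge skeleton. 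Every edge of $\partial F$ is of the form $\bar F\cap\bar G$, hence $\partial_n\oz|_F$ has zero trace on $\partial F$ and therefore $\partial_n\oz|_F\in H^1_0(F)$. The zero extension of an $H^1_0(F)$ function across the edges of $F$ to all of $\partial\Omega$ lies in $H^1(\partial\Omega)$, and summing these extensions over the finitely many faces recovers $\partial_n\oz$; this yields $\partial_n\oz\in H^1(\partial\Omega)$ with the claimed bound. Finally, the projection identity \eqref{eq:opt_cond_proj}, $\oq=P_{[q_a,q_b]}(\tfrac1\alpha\partial_n\oz)$, together with the fact that $P_{[q_a,q_b]}$ is globally Lipschitz (a Stampacchia-type superposition argument gives $\abs{\nabla_{\partial\Omega}\oq}\le\tfrac1\alpha\abs{\nabla_{\partial\Omega}\partial_n\oz}$ a.e.) transfers $H^1(\partial\Omega)$ regularity to $\oq$; adding the two estimates gives the desired bound on $\norm{\oq}_{H^1(\partial\Omega)}+\norm{\partial_n\oz}_{H^1(\partial\Omega)}$.

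The hard part will be the passage from face-wise $H^1$ regularity to \emph{global} $H^1(\partial\Omega)$ regularity on the nonsmooth polyhedral boundary: a priori $\partial_n\oz$ is only in the broken space $H^1_{\mathrm{pw}}(\partial\Omega)$, and across each edge the surface is only Lipschitz, so one cannot glue blindly. The decisive point is that the edge traces from both adjacent faces agree — they are both zero — which is exactly what \cref{DirichletCon:theorem:q=0_on_edges} guarantees and is the reason that theorem was proved. This is precisely what makes the $H^1_0(F)$ zero-extension argument legitimate and removes any distributional jump contribution along the skeleton.
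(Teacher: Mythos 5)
Your proposal is correct and follows essentially the same path as the paper: $\lambda_\Omega>\frac32$ gives $s_\Omega=\frac12$ so that \cref{theorem:weighted_H2reg} applies with $f=\ou-u_d$ for all $s<\frac12$, then \cref{cor:normal_trace_in_H1} gives the face-wise $H^1$ bound, \cref{DirichletCon:theorem:q=0_on_edges} (via $u_d\in H^1(\Omega)\hookrightarrow L^6(\Omega)$) handles the edge skeleton, and the projection formula transfers the regularity to $\oq$. The only difference is cosmetic: where the paper glues the faces implicitly through continuity of $\partial_n\oz$ across edges, you make the gluing explicit via the $H^1_0(F)$ zero-extension argument, which is a legitimate (in fact slightly more detailed) way to carry out the same step.
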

\begin{proof}
	The proof of the first result follows the lines of the proof of \cref{theorem:W22-s_oz}. 
	Since $\lambda_\Omega > \frac{3}{2}$ we have $s_\Omega = \half$ and the estimate from \cref{theorem:weighted_H2reg} holds for all $s<\half$ leading to
	\[
	\norm{\oz}_{W^{2,2}_{-s}(\Omega)} \le \frac{c}{1-2s} \norm{u_d}_{H^1(\Omega)}.
	\]
	For the second result, we obtain by \cref{cor:normal_trace_in_H1} $\partial_n \oz \in H^1(F)$ for every $F \in {\mathcal F}(\Omega)$ and the corresponding estimate. Since $u_d \in H^1(\Omega) \hookrightarrow L^6(\Omega)$ by Sobolev embedding, we can apply \cref{DirichletCon:theorem:q=0_on_edges} leading to $\partial_n \oz \in C(\partial \Omega)$. This results in the global regularity $\partial_n \oz \in H^1(\partial \Omega)$ and the corresponding estimate holds. The result for $\oq$ follows from the optimality condition \cref{DirichletCon:eq:opt_sys_gradient}. This completes the proof.
\end{proof} 

\section{Discretization estimates for the state equation}\label{sec4}
For the discretization we consider the space of linear finite elements $\widehat V_h \subset H^1(\Omega)$ defined on a mesh $\T_h$ from a family of shape regular quasi-uniform meshes, see, e.g., \cite{BrennerScott:2008}. The mesh $\T_h = \{K\}$ consists of cells $K$, which are open tetrahedrons.
We also use the subspace $V_h = \widehat V_h \cap H^1_0(\Omega)$ with homogeneous  Dirichlet boundary conditions and the space $V_h^\partial$ of traces of functions from $\widehat V_h$, i.e.
\[
V_h^\partial = \Set{\tau v_h | v_h \in \widehat V_h},
\]
where $\tau \colon H^1(\Omega) \to H^{\nhalf}(\partial \Omega)$ is the trace operator. The state equation \eqref{PDE:eq:very_weak_inhDirichlet} is discretized as follows: For given $q \in L^2(\partial \Omega)$ the discrete state $u_h = u_h(q) \in \widehat V_h$ fulfills
\begin{equation}\label{FEM:eq:poisson_inhomogenDirichlet}
u_h \in E_h P_h^\partial q + V_h \quad:\quad (\nabla u_h,\nabla \phi_h) = 0 \quad \text{for all } v_h \in V_h,
\end{equation}
where $P_h^\partial \colon L^2(\partial \Omega) \to V_h^\partial$ is the $L^2(\partial \Omega)$ projection and $E_h \colon V_h^\partial \to \widehat V_h$ is the extension-by-zero operator. We call the operator $S_h \colon q \mapsto u_h(q)$ the discrete solution operator. For $q \in V_h^\partial$ it is often called the discrete harmonic extension.

\begin{remark}
 The above formulation is independent of the choice of the extension operator and thus, other choices are possible, which lead to equivalent formulations. The choice of the $L^2(\partial \Omega)$ projection is due to the consideration of potentially irregular boundary data and allows for optimal convergence of the finite element method under minimal regular assumptions on the data, cf. the discussions in \cite{Berggren:2004,BartelsCarstensenDolzmann:2004,ApelNicaisePfefferer:2017}.
\end{remark}
By straightforward arguments one obtains the unique solvability of \eqref{FEM:eq:poisson_inhomogenDirichlet}. Moreover, finite element error estimates can be established.
\begin{theorem}\label{theorem:state_error_est}
	Let $q \in H^s(\partial \Omega)$ with some $0 \le s \le \half$. Let $u$ be the solution of \eqref{PDE:eq:very_weak_inhDirichlet} and $u_h$ of \eqref{FEM:eq:poisson_inhomogenDirichlet}. There holds
	\[
	\ltwonorm{u-u_h} \le c h^{s+\half} \norm{q}_{H^s(\partial \Omega)}
	\]
	with a constant $c>0$ independent of $q$ and $h$.
\end{theorem}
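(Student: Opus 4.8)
The plan is to run an Aubin--Nitsche duality argument, reduce the $L^2(\Omega)$ error to the approximation of an $H^2$-regular dual state, and then recover the full range $0\le s\le\nhalf$ by interpolating the error operator between the two endpoints $s=0$ and $s=\nhalf$. The work splits into a harmless \emph{consistency} term coming from the boundary projection $P_h^\partial$ and a \emph{gradient} term involving $\nabla u_h$, and it is the latter that carries the whole difficulty.

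First I would introduce the dual solution $w\in H^2(\Omega)\cap H^1_0(\Omega)$ of $-\Lap w=u-u_h$ in $\Omega$, $w=0$ on $\partial\Omega$, which by convexity satisfies $\htwonorm{w}\le c\ltwonorm{u-u_h}$. Testing the very weak formulation \eqref{PDE:eq:very_weak_inhDirichlet} with $w$ gives $(u,-\Lap w)=-(q,\partial_n w)_{\partial\Omega}$, while integrating by parts in $(u_h,-\Lap w)$ and using that the trace of $u_h$ equals $P_h^\partial q$ yields $(u_h,-\Lap w)=(\nabla u_h,\nabla w)-(P_h^\partial q,\partial_n w)_{\partial\Omega}$. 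Subtracting and inserting the nodal interpolant $I_h w\in V_h$ (it lies in $V_h$ because $w$ vanishes on $\partial\Omega$ and $H^2(\Omega)\hookrightarrow C(\bar\Omega)$ in three dimensions), the Galerkin orthogonality $(\nabla u_h,\nabla I_h w)=0$ produces the error identity
\[
\ltwonorm{u-u_h}^2=(P_h^\partial q-q,\partial_n w)_{\partial\Omega}-(\nabla u_h,\nabla(w-I_h w)).
\]

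The consistency term is the easy one and already yields the optimal rate for every $s$. Using the $L^2(\partial\Omega)$-orthogonality of $P_h^\partial q-q$ to $V_h^\partial$, I would subtract any boundary approximation $v_h\in V_h^\partial$ of $\partial_n w$ and estimate by Cauchy--Schwarz, combining $\norm{q-P_h^\partial q}_{L^2(\partial\Omega)}\le ch^{s}\norm{q}_{H^s(\partial\Omega)}$ with $\norm{\partial_n w-v_h}_{L^2(\partial\Omega)}\le ch^{\nhalf}\norm{\partial_n w}_{H^{\nhalf}(\partial\Omega)}$; here the normal-trace regularity $\partial_n w\in H^{\nhalf}(\partial\Omega)$ and the bound $\norm{\partial_n w}_{H^{\nhalf}(\partial\Omega)}\le c\htwonorm{w}$ are exactly \cref{PDE:theorem:normaltrace_estimate} for $p=2$. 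This contributes $ch^{s+\nhalf}\norm{q}_{H^s(\partial\Omega)}\ltwonorm{u-u_h}$ for all $s\in[0,\nhalf]$.

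The gradient term is the main obstacle, precisely because for rough data $q\in L^2(\partial\Omega)$ the solution only lies in $H^{\nhalf}(\Omega)$ and $\nabla u_h$ is \emph{not} bounded in $L^2(\Omega)$ uniformly in $h$. I would treat the two endpoints separately. At $s=0$ I would control it through the weight by writing $(\nabla u_h,\nabla(w-I_h w))=(\tilde\rho^{\nhalf}\nabla u_h,\tilde\rho^{-\nhalf}\nabla(w-I_h w))$; the first factor is bounded by $c\ltwonormdo{q}$ using the discrete weighted $H^1$ stability \eqref{eq:weightedH1_discrete_apriori} (the discrete counterpart of the continuous bound in \cref{theorem:state}), and a cell-wise interpolation estimate gives $\ltwonorm{\tilde\rho^{-\nhalf}\nabla(w-I_h w)}\le ch^{\nhalf}\htwonorm{w}$, since on each cell $\tilde\rho^{-1}\le ch^{-1}$ while the $H^1$ interpolation error is $O(h)$. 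This delivers the rate $h^{\nhalf}$. At the opposite endpoint $s=\nhalf$ one has $u\in H^1(\Omega)$ by \cref{theorem:state}, hence $\nabla u\in L^2(\Omega)$, and harmonicity of $u$ gives $(\nabla u,\nabla(w-I_h w))=0$; therefore $(\nabla u_h,\nabla(w-I_h w))=(\nabla(u_h-u),\nabla(w-I_h w))$ is bounded by $\honenorm{u-u_h}\,\ltwonorm{\nabla(w-I_h w)}\le c\norm{q}_{H^{\nhalf}(\partial\Omega)}\cdot ch\,\htwonorm{w}$, using the $H^1$-stability $\honenorm{u-u_h}\le c\norm{q}_{H^{\nhalf}(\partial\Omega)}$ (from $H^{\nhalf}$-stability of $P_h^\partial$ on quasi-uniform meshes together with the energy-minimizing property of the discrete harmonic extension), and yields the rate $h$. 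Finally, having established $\norm{(S-S_h)q}_{L^2(\Omega)}\le ch^{\nhalf}\ltwonormdo{q}$ and $\norm{(S-S_h)q}_{L^2(\Omega)}\le ch\norm{q}_{H^{\nhalf}(\partial\Omega)}$, the intermediate case follows by operator interpolation of the linear map $S-S_h$, since $[L^2(\partial\Omega),H^{\nhalf}(\partial\Omega)]_{2s}=H^{s}(\partial\Omega)$ and $(h^{\nhalf})^{1-2s}h^{2s}=h^{s+\nhalf}$. The two points I expect to verify most carefully are the weighted interpolation estimate near $\partial\Omega$ and the weighted discrete gradient bound at $s=0$, which is the genuine new ingredient compared with the smooth-domain theory.
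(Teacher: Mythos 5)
Your duality identity, the treatment of the consistency term $(P_h^\partial q-q,\partial_n w)_{\partial\Omega}$ (including the correct use of \cref{PDE:theorem:normaltrace_estimate} for $p=2$), the $s=\nhalf$ endpoint, and the final operator-interpolation step are all sound. The genuine gap is at the $s=0$ endpoint: there you control $\ltwonorm{\tilde\rho^{\nhalf}\nabla u_h}$ by invoking the weighted discrete stability \eqref{eq:weightedH1_discrete_apriori}, i.e.\ \cref{theorem:discrete_state_stab}. But in the paper that result comes \emph{after} \cref{theorem:state_error_est}, and its proof depends on it: in the third step of the proof of \cref{theorem:discrete_state_stab}, the term $h^{-\nhalf}\ltwonorm{\tilde u-u_h}$ is bounded by $c\,\ltwonormdo{P_h^\partial q}$ precisely by citing \cref{theorem:state_error_est} with $s=0$ applied to the data $P_h^\partial q$ (using $P_h^\partial(P_h^\partial q)=P_h^\partial q$). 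So the ingredient you single out as ``the genuine new ingredient compared with the smooth-domain theory'' is, in the paper's logical order, a \emph{consequence} of the theorem you are proving; your argument is circular unless you supply an independent proof of \cref{theorem:discrete_state_stab}, which you do not (and which is not easy, since no proof avoiding the $L^2$ error estimate is available in the paper).

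The circularity is repairable without leaving your framework, essentially by reusing at $s=0$ the same trick you already use at $s=\nhalf$, but with the harmonic extension $\tilde u$ of $P_h^\partial q$ in place of $u$ (for rough $q$ the function $u$ is not in $H^1(\Omega)$, but $\tilde u$ always is): since $w-I_hw\in H^1_0(\Omega)$ and $\tilde u$ is harmonic, $(\nabla\tilde u,\nabla(w-I_hw))=0$, so the gradient term equals $(\nabla(u_h-\tilde u),\nabla(w-I_hw))$; the best-approximation property of $u_h-\tilde u$ in the energy norm together with the Scott--Zhang interpolant and a boundary inverse estimate gives $\ltwonorm{\nabla(u_h-\tilde u)}\le c\norm{P_h^\partial q}_{H^{\nhalf}(\partial\Omega)}\le ch^{-\nhalf}\ltwonormdo{q}$, which combined with $\ltwonorm{\nabla(w-I_hw)}\le ch\,\htwonorm{w}$ recovers the rate $h^{\nhalf}$ with no weighted norms at all. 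The paper's own proof is organized differently but in the same spirit: it splits $u-u_h=(u-\tilde u)+(\tilde u-u_h)$, estimates $u-\tilde u$ purely on the continuous level via the very weak stability of \cref{theorem:state} and the negative-norm projection bound $\norm{q-P_h^\partial q}_{H^{-\nhalf}(\partial\Omega)}\le ch^{s+\nhalf}\norm{q}_{H^s(\partial\Omega)}$ (which yields the full rate for all $s$ in one stroke), and then handles $\tilde u-u_h\in H^1_0(\Omega)$ by a duality argument with the Scott--Zhang interpolant and the Ritz projection, followed by an inverse estimate and interpolation. Either way, the weighted machinery of \cref{theorem:discrete_state_stab} must stay out of this proof: it is needed later for the optimal control analysis and is allowed to depend on \cref{theorem:state_error_est}, not conversely.
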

\begin{proof}
	The proof follows standard arguments. For the convenience of the reader, we repeat the essential steps.
	Let $\tilde u\in H^1(\Omega)$ be the harmonic extension of $P_h^\partial q$, i.e., the weak solution to
	\[
		-\Delta \tilde u = 0\quad \text{in }\Omega,\quad \tilde u=P_h^\partial q\quad \text{on }\partial\Omega.
	\]
	We split the $L^2(\Omega)$-error into two,
	\begin{equation}\label{eq:state_1}
		\ltwonorm{u-u_h}\le \ltwonorm{u-\tilde u}+\ltwonorm{\tilde u-u_h}.
	\end{equation}
	We notice that $\tilde u$ is also a very weak solution, such that $u-\tilde u$ satisfies \cref{PDE:eq:very_weak_inhDirichlet} with Dirichlet data $q-P_h^\partial q$. As a consequence, we get from \cref{theorem:state}
	\[
		\ltwonorm{u-\tilde u}\le c \norm{q-P_h^\partial q}_{H^{-\nhalf}(\partial \Omega)}\le c h^{s+\half} \norm{q}_{H^s(\partial \Omega)},
	\]
	where we applied a standard estimate for the $L^2(\partial\Omega)$ projection in the last step. It remains to estimate the second term in \cref{eq:state_1}. As $\tilde u$ is also a weak solution with Dirichlet boundary data $P_h^\partial q$, we can apply a duality argument, i.e., we define $w\in H^1_0(\Omega)$ as the weak solution to
	\[
	-\Delta w = \tilde u - u_h\quad \text{in }\Omega,\quad w=0\quad \text{on }\partial\Omega
	\]
	and $w_h$ its $H^1_0(\Omega)$-Ritz-projection.
	Moreover, let $\tilde i_h:H^1(\Omega)\to \widehat V_h$ be the Scott-Zhang interpolation operator \cite{ScottZhang:1990}. This operator preserves boundary conditions from $V_h^\partial$ and admits the estimate
	\begin{equation}\label{eq:scott_zhang_stab}
		\norm{\nabla (v - \tilde i_h v)}_{L^2(\Omega)}\le c h^s \norm{v}_{H^{1+s}(\Omega)}
	\end{equation}
	for all $v\in H^{1+s}(\Omega)$ and $s\in[0,1]$. Then, we obtain
	\[
		\ltwonorm{\tilde u-u_h}^2=(\nabla (\tilde u-\tilde i_h \tilde u),\nabla(w-w_h))\le \norm{\nabla (\tilde u-\tilde i_h \tilde u)}_{L^2(\Omega)}\norm{\nabla(w-w_h)}_{L^2(\Omega)}.
	\]
	A standard finite element error estimate employing elliptic regularity in convex domains leads to
	\[
		\norm{\nabla(w-w_h)}_{L^2(\Omega)}\le c h \ltwonorm{\tilde u-u_h}.
	\]
	The stability estimate \cref{eq:scott_zhang_stab} for the Scott-Zhang interpolation operator and \cref{theorem:state} imply
	\[
		\norm{\nabla (\tilde u-\tilde i_h \tilde u)}_{L^2(\Omega)}\le c \norm{\tilde u}_{H^1(\Omega)}\le c \norm{P_h^\partial q}_{H^{\nhalf}(\partial \Omega)}. 
	\]
	As a consequence, we get
	\[
		\ltwonorm{\tilde u-u_h}\le c h \norm{P_h^\partial q}_{H^{\nhalf}(\partial \Omega)}
	\]
	and by means of an inverse estimate
	\[
		\ltwonorm{\tilde u-u_h}\le c h^{\half} \norm{P_h^\partial q}_{L^2(\partial \Omega)}.
	\]
	An interpolation argument gives
	\[
		\ltwonorm{\tilde u-u_h}\le c h^{s+\half} \norm{P_h^\partial q}_{H^{s}(\partial \Omega)}.
	\]
	The $L^2(\partial\Omega)$ projection is known to be stable in $L^2(\partial\Omega)$. Moreover, it is stable in $H^1(\partial \Omega)$ by the quasi-uniformity of the mesh, and thus also on $H^s(\partial \Omega)$ by another interpolation argument. Combining the previous results completes the proof.
\end{proof}
\begin{remark}
	The above estimate can be extended to the range $0 \le s \le \frac32$ (eventually) with a different norm on the right hand side (see e.g. \cite{BartelsCarstensenDolzmann:2004} for the case $s=\frac32$), but this is not required in our analysis.
\end{remark}

By means of \cref{theorem:state} and \cref{theorem:state_error_est}, the discrete solution operator is (uniformly in $h$) bounded as $S_h  \colon L^2(\partial \Omega) \to L^2(\Omega)$.

We require a discrete version of the weighted stability result from \cref{theorem:state}. To this end we define a regularized weight $\tilde \rho \colon \bar \Omega \to \R_+$ by
 \begin{equation}\label{eq:tilde_rho}
 	\quad \tilde \rho(x) = \sqrt{\rho(x)^2+\kappa^2 h^2},
 \end{equation} 
where $\kappa \ge 1$ is chosen later large enough independent of $h$. 
 
 \begin{theorem}\label{theorem:discrete_state_stab}
 	Let $q \in L^2(\partial\Omega)$ and $u_h = u_h(q)$ be the solution of \cref{FEM:eq:poisson_inhomogenDirichlet}. There is a constant $c>0$ independent on $h$ and $q$ such that the following estimate holds
 	\[
 	\ltwonorm{\tilde \rho^\half \nabla u_h}  \le c \ltwonormdo{P_h^\partial q}.
 	\]
 \end{theorem}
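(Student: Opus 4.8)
The plan is to compare the discrete harmonic extension $u_h$ with the continuous harmonic extension of the same discrete boundary datum. Set $g = P_h^\partial q \in V_h^\partial$ and let $\tilde u \in H^1(\Omega)$ be the weak solution of $-\Lap \tilde u = 0$ in $\Omega$, $\tilde u = g$ on $\partial\Omega$. Since $g \in H^{\nhalf}(\partial\Omega)$, the function $\tilde u$ is simultaneously the very weak solution for the datum $g$, cf. \cref{PDE:remark:veryweak_as_extension_of_weak}, and it is harmonic. Both $u_h$ and $\tilde u$ carry the boundary trace $g$, so the error $e_h = \tilde u - u_h$ lies in $H^1_0(\Omega)$ and, comparing \eqref{FEM:eq:poisson_inhomogenDirichlet} with the weak form of $\tilde u$, it satisfies the Galerkin orthogonality $(\nabla e_h, \nabla \phi_h) = 0$ for all $\phi_h \in V_h$. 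By the triangle inequality it then suffices to bound $\ltwonorm{\tilde\rho^\half \nabla \tilde u}$ and $\ltwonorm{\tilde\rho^\half \nabla e_h}$ separately.

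For the first term I would exploit $\tilde\rho \le \rho + \kappa h$ (which follows from \eqref{eq:tilde_rho}) to split $\into \tilde\rho |\nabla \tilde u|^2 \le \ltwonorm{\rho^\half \nabla \tilde u}^2 + \kappa h\, \ltwonorm{\nabla \tilde u}^2$. The first contribution is controlled by $c\,\ltwonormdo{g}^2$ directly from the continuous weighted estimate in \cref{theorem:state} applied with control $g$. For the second, an inverse estimate on $V_h^\partial$ gives $\norm{g}_{H^{\nhalf}(\partial\Omega)}^2 \le c h^{-1} \ltwonormdo{g}^2$, whence $\ltwonorm{\nabla \tilde u}^2 \le c\, \norm{g}_{H^{\nhalf}(\partial\Omega)}^2 \le c h^{-1} \ltwonormdo{g}^2$, and the factor $\kappa h$ cancels the $h^{-1}$. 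Both pieces are thus bounded by $c\,\ltwonormdo{g}$, with a constant depending on the fixed parameter $\kappa$.

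The core is the weighted energy estimate for $e_h$. Writing $T = \into \tilde\rho |\nabla e_h|^2$ and using $\tilde\rho e_h \in H^1_0(\Omega)$, I would start from the identity $T = (\nabla e_h, \nabla(\tilde\rho e_h)) - (\nabla e_h, e_h \nabla \tilde\rho)$ and insert Galerkin orthogonality to replace $\tilde\rho e_h$ by $\tilde\rho e_h - I_h(\tilde\rho e_h)$, where $I_h$ is the Scott--Zhang operator \cite{ScottZhang:1990} preserving homogeneous boundary values. Using $|\nabla \tilde\rho| \le c$ for the commutator term, a weighted Cauchy--Schwarz inequality yields
\[
T \le \ltwonorm{\tilde\rho^\half \nabla e_h}\Big(\ltwonorm{\tilde\rho^{-\half}\nabla(\tilde\rho e_h - I_h(\tilde\rho e_h))} + c\,\ltwonorm{\tilde\rho^{-\half} e_h}\Big).
\]
The lower-order factor $\ltwonorm{\tilde\rho^{-\half} e_h}$ is handled cleanly: since $\tilde\rho \ge \kappa h$ pointwise and the intermediate $L^2$ bound $\ltwonorm{\tilde u - u_h} \le c h^\half \ltwonormdo{g}$ established within the proof of \cref{theorem:state_error_est} is available, one gets $\ltwonorm{\tilde\rho^{-\half} e_h} \le (\kappa h)^{-\half} \ltwonorm{e_h} \le c\,\ltwonormdo{g}$.

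The main obstacle is the interpolation term $\ltwonorm{\tilde\rho^{-\half}\nabla(\tilde\rho e_h - I_h(\tilde\rho e_h))}$, which I expect to dominate the difficulty. The goal is to bound it by $\epsilon\,\ltwonorm{\tilde\rho^\half \nabla e_h}$ plus lower-order terms, so that the leading part is absorbed into $T$. On a single cell $K$ the weight $\tilde\rho$ is comparable to a constant, because $|\nabla \tilde\rho| \le c$ and $\tilde\rho \ge \kappa h$ force the relative oscillation to be $O(\kappa^{-1})$; this is precisely where $\kappa$ must be chosen large. Element-wise interpolation estimates for $\tilde\rho e_h$ then produce the factor $h_K \tilde\rho^{-1} \le \kappa^{-1}$ in front of $\ltwonorm{\tilde\rho^\half \nabla e_h}$-type quantities, together with terms involving $\nabla^2(\tilde\rho e_h)$. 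Since $u_h$ is piecewise linear, $\nabla^2(\tilde\rho e_h)$ reduces, up to first-order terms in $u_h$, to $\nabla^2(\tilde\rho \tilde u)$, and the genuinely delicate contribution $\tilde\rho \nabla^2 \tilde u$ blows up like $\rho^{-1}$ near $\partial\Omega$. I would control it by interior (Caccioppoli) estimates for the harmonic function $\tilde u$, namely $\norm{\nabla^2 \tilde u}_{L^2(B_{\rho(x)/2}(x))} \le c\,\rho(x)^{-1} \norm{\nabla \tilde u}_{L^2(B_{\rho(x)}(x))}$, summed over a Whitney/dyadic decomposition of $\Omega$ into shells $\{\,2^j \kappa h \le \rho \le 2^{j+1}\kappa h\,\}$. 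On each shell the regularized weight is comparable to $2^j \kappa h$, the interior estimate converts weighted second derivatives of $\tilde u$ back into weighted first derivatives, and the resulting geometric series in $j$ converges, yielding a bound by $c\big(\ltwonorm{\tilde\rho^\half \nabla \tilde u} + \ltwonorm{\tilde\rho^\half \nabla e_h}\big)$. Choosing $\kappa$ large enough to absorb the $\ltwonorm{\tilde\rho^\half \nabla e_h}$ contribution into $T$, and invoking the second step for $\ltwonorm{\tilde\rho^\half \nabla \tilde u}$, closes the estimate.
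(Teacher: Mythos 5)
There is a genuine gap, and it sits exactly where you predicted the difficulty would be. Your overall architecture (split off the continuous harmonic extension $\tilde u$ of $P_h^\partial q$, use a weighted energy identity plus Galerkin orthogonality, absorb via largeness of $\kappa$) parallels the paper's, but you form the weighted energy estimate for $e_h=\tilde u-u_h$, a \emph{non-discrete} function, so your interpolation term forces you to face $\tilde\rho\,\nabla^2\tilde u$. This quantity is not merely "delicate": it is not in $L^2(\Omega)$ at all. The boundary datum $P_h^\partial q$ is piecewise linear with kinks across the edges of the boundary mesh, and near such an edge the harmonic extension behaves like $x_2\log r$ (in local coordinates), so $\abs{\nabla^2\tilde u}\sim \dist(\cdot,\text{edge})^{-1}$ and $\int r^{-2}\,r\,dr$ diverges logarithmically. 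Since the regularized weight satisfies $\tilde\rho\ge\kappa h>0$ up to the boundary, it cannot restore integrability, hence $\ltwonorm{\tilde\rho^{\nhalf}\nabla^2\tilde u}=+\infty$ and any cell-wise $H^2$-based interpolation bound is vacuous. Your proposed rescue by Caccioppoli/Whitney estimates also cannot close: on a shell $\{\rho\sim\delta\}$ with $\delta=2^{-j}\kappa h$ the interior estimate produces the factor $\delta^{-1}$ (distance to the boundary), while your weight has the floor $\kappa h\gg\delta$; combining $\ltwonorm{\tilde\rho^{\nhalf}\nabla^2\tilde u}^2_{\{\rho\sim\delta\}}\le c\,\kappa h\,\delta^{-2}\norm{\nabla\tilde u}^2_{L^2(\{\rho\sim\delta\})}$ with the only available control $\norm{\nabla\tilde u}^2_{L^2(\{\rho\sim\delta\})}\le c\,\delta^{-1}\ltwonormdo{P_h^\partial q}^2$ from \cref{theorem:state} gives, after multiplying by $h^2$, a contribution $\sim\kappa^{-2}8^{\,j}\ltwonormdo{P_h^\partial q}^2$, which diverges geometrically as $j\to\infty$. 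So the argument fails precisely in the layer $\{\rho\lesssim\kappa h\}$, where interior regularity of $\tilde u$ is unavailable.

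The missing idea, which is how the paper's proof avoids this entirely, is to never let $\nabla^2\tilde u$ enter: compare $u_h$ not with $\tilde u$ but with its Scott--Zhang interpolant $\tilde i_h\tilde u$ (which has the same discrete trace $P_h^\partial q$), and run the weighted energy identity for the \emph{discrete} difference $w_h=\tilde i_h\tilde u-u_h\in V_h$. Then the interpolation term involves $\nabla^2(\tilde\rho\,w_h)$, and since $w_h$ is piecewise linear its second derivatives vanish element-wise, so only derivatives of the weight survive; by \cref{eq:prop_tilde_rho}, $\tilde\rho\ge\kappa h$, and the weighted inverse estimate (\cref{FEM:lemma:weighted_inverse}) one obtains the superconvergence bound of \cref{FEM:lemma:superconvergence_ih}, $\ltwonorm{\tilde\rho^{-\nhalf}\nabla(\operatorname{id}-i_h)(\tilde\rho w_h)}\le c\,\ltwonorm{\tilde\rho^{-\nhalf}w_h}$, and this lower-order term is controlled exactly as you controlled $\ltwonorm{\tilde\rho^{-\nhalf}e_h}$, via $\tilde\rho\ge\kappa h$ and the $L^2$ error estimate of \cref{theorem:state_error_est}. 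The remaining pieces of your proposal (the bound for $\ltwonorm{\tilde\rho^{\nhalf}\nabla\tilde u}$, the use of Galerkin orthogonality, the treatment of the commutator with $\nabla\tilde\rho$) are sound and correspond to Steps 1 and 3 of the paper's proof; what is needed in addition is the weighted $H^1$ stability of the Scott--Zhang interpolant itself (Step 2 of the paper), which replaces your problematic interpolation term.
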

To provide a proof of this result we require some weighted interpolation and super-convergence estimates. First, we obtain by direct calculation  that 
\begin{equation}\label{eq:prop_tilde_rho}
\abs{\nabla \tilde \rho}  \le c \quad \text{and} \quad \abs{\nabla^2 \tilde \rho} \le c \tilde \rho^{-1}.
\end{equation}
A direct consequence of the mean value theorem is due to the boundedness of the gradient of $\tilde \rho$ the existence of a constant $c$ independent of $h$ such that the following estimate holds
\begin{equation}\label{eq:tilde_rho_local}
\max_{x \in \bar K} \tilde \rho(x) \le c \min_{x \in \bar K} \tilde \rho (x) \quad \text{for all } K \in \T_h.
\end{equation}

This directly leads to weighted interpolation error estimates for the Lagrange interpolation $i_h \colon C(\bar \Omega) \to \widehat V_h$, which are summarized in the following lemma.
\begin{lemma}\label{FEM:lemma:est_interpolation_general_weight}
	Let $\alpha \in \R$ be arbitrary. Then there is a constant $c>0$ independent of $\alpha$, $h$ and $v$ such that
	\[
	\ltwonorm{\tilde \rho^\alpha(v-i_h v)} + h \ltwonorm{\tilde \rho^\alpha\nabla (v-i_h v)} \le c h^2 \ltwonorm{\tilde \rho^\alpha \nabla^2 v}
	\]
	holds for all $v \in H^2(\Omega)$.
\end{lemma}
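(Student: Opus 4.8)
The plan is to reduce the global weighted estimate to the standard unweighted local interpolation estimates on each cell, exploiting that by \eqref{eq:tilde_rho_local} the regularized weight $\tilde\rho^\alpha$ is essentially constant on every $K\in\T_h$. First I observe that the Lagrange interpolant $i_h v$ is well defined: in three dimensions $H^2(\Omega)\hookrightarrow C(\bar\Omega)$, so $v$ is continuous and point evaluation at the nodes is meaningful. I then write both terms on the left-hand side as sums of local contributions,
\[
\ltwonorm{\tilde\rho^\alpha w}^2 = \sumk \ltwonormk{\tilde\rho^\alpha w}^2, \qquad w\in\{\,v-i_h v,\ \nabla(v-i_h v)\,\},
\]
so that it suffices to prove the corresponding estimate cell by cell and then sum.

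On a fixed $K$, the classical Bramble--Hilbert and scaling argument for Lagrange interpolation on shape-regular tetrahedra yields the unweighted bound
\[
\ltwonormk{v-i_h v} + h\,\ltwonormk{\nabla(v-i_h v)} \le c\,h^2\,\ltwonormk{\nabla^2 v},
\]
with $c$ depending only on the shape-regularity constant. To reinstate the weight I bound $\tilde\rho^\alpha$ from above by $\max_{\bar K}\tilde\rho^\alpha$ on the two left-hand norms and from below by $\min_{\bar K}\tilde\rho^\alpha$ inside the right-hand norm. The local equivalence \eqref{eq:tilde_rho_local} lets me absorb the resulting ratio $\max_{\bar K}\tilde\rho^\alpha/\min_{\bar K}\tilde\rho^\alpha$ into the constant, giving
\[
\ltwonormk{\tilde\rho^\alpha(v-i_h v)} + h\,\ltwonormk{\tilde\rho^\alpha\nabla(v-i_h v)} \le c\,h^2\,\ltwonormk{\tilde\rho^\alpha\nabla^2 v}.
\]
Squaring, summing over all $K\in\T_h$, and using the $\ell^2$-summability of the local contributions then yields the claimed global estimate.

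The delicate point — and precisely the reason the regularization term $\kappa^2h^2$ in \eqref{eq:tilde_rho} is present — is controlling the weight ratio $\max_{\bar K}\tilde\rho/\min_{\bar K}\tilde\rho$ uniformly, in particular so that its $\alpha$-th power does not blow up. Here I would use the gradient bound $\abs{\nabla\tilde\rho}\le c$ from \eqref{eq:prop_tilde_rho} together with the lower bound $\tilde\rho\ge\kappa h$: for $x,y\in\bar K$ one estimates $\abs{\log\tilde\rho(x)-\log\tilde\rho(y)}\le \int \abs{\nabla\tilde\rho}/\tilde\rho \le c\,\diam(K)/(\kappa h)\le c/\kappa$, since $\diam(K)\le ch$. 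Thus the oscillation of $\log\tilde\rho$ over each cell is controlled by $c/\kappa$, independently of $h$; choosing $\kappa$ large enough (independently of $h$) makes this oscillation as small as required, which is exactly the content of \eqref{eq:tilde_rho_local} and what keeps the constant in the final estimate independent of $h$ and $\alpha$. I expect verifying this uniform near-constancy of $\tilde\rho^\alpha$ to be the step requiring the most care, since everything else is a routine localization of known interpolation bounds.
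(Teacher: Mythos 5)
Your proposal is correct and follows exactly the route the paper intends: the paper states this lemma without a separate proof, treating it as a direct consequence of \eqref{eq:tilde_rho_local} (the cell-wise equivalence $\max_{\bar K}\tilde\rho \le c\,\min_{\bar K}\tilde\rho$, itself obtained from the mean value theorem, $\abs{\nabla\tilde\rho}\le c$ and $\tilde\rho\ge\kappa h$) combined with the standard local Bramble--Hilbert interpolation estimates, which is precisely your localization argument. Your closing discussion of the $\log\tilde\rho$ oscillation and the role of $\kappa$ is a slightly more careful justification of \eqref{eq:tilde_rho_local} than the paper's one-line remark, but it is the same mechanism, so there is nothing to add.
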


The next lemma provides a weighted inverse estimate. 
\begin{lemma}\label{FEM:lemma:weighted_inverse}
	Let $\alpha \in \R$ be arbitrary. There is a constant $c>0$ independent of $\alpha$ and $h$ such that
	\[
	\norm{\tilde \rho^\alpha \nabla v_h}_{L^2(K)} \le ch^{-1} \norm{\tilde \rho^\alpha v_h}_{L^2(K)}
	\]
	is valid for all $K\in\T_h$ and $v_h \in \widehat V_h$.
\end{lemma}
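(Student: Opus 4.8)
The plan is to reduce the weighted inverse estimate to the classical unweighted one on a single element, exploiting that the regularized weight $\tilde\rho$ is essentially constant on each cell. The asserted inequality is purely local, concerning a fixed $K\in\T_h$, so no summation over cells is needed and it suffices to argue on one element.

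First I would fix $K\in\T_h$, pick a reference point $x_K\in\bar K$ (say its barycenter), and set $\rho_K=\tilde\rho(x_K)$. The decisive ingredient is the local comparability \cref{eq:tilde_rho_local}, which gives $\max_{\bar K}\tilde\rho\le c_0\min_{\bar K}\tilde\rho$ with a constant $c_0$ independent of $K$ and $h$; in particular $c_0^{-1}\rho_K\le\tilde\rho(x)\le c_0\rho_K$ on $\bar K$. Raising to the (real) power $\alpha$ this yields
\[
c_0^{-\abs{\alpha}}\rho_K^{\alpha}\le\tilde\rho(x)^{\alpha}\le c_0^{\abs{\alpha}}\rho_K^{\alpha}\qquad\text{for all }x\in\bar K,
\]
so that $\tilde\rho^{\alpha}$ is comparable to the \emph{constant} $\rho_K^{\alpha}$ on $K$, with comparability factor $c_0^{\abs{\alpha}}$.

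Next I would insert these two-sided bounds into both sides of the claimed inequality. For arbitrary $v_h\in\widehat V_h$ the upper bound gives $\ltwonormk{\tilde\rho^{\alpha}\nabla v_h}\le c_0^{\abs{\alpha}}\rho_K^{\alpha}\ltwonormk{\nabla v_h}$, while applying the lower bound to $v_h=\tilde\rho^{-\alpha}(\tilde\rho^{\alpha}v_h)$ gives $\ltwonormk{v_h}\le c_0^{\abs{\alpha}}\rho_K^{-\alpha}\ltwonormk{\tilde\rho^{\alpha}v_h}$. The scalar prefactor $\rho_K^{\alpha}$ now \emph{cancels} between the two sides, so everything reduces to the unweighted estimate $\ltwonormk{\nabla v_h}\le c_{\mathrm{inv}}h^{-1}\ltwonormk{v_h}$. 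This is the standard inverse inequality for finite element functions on a shape-regular cell, combined with the quasi-uniformity of the family $\T_h$ (which guarantees $h_K\ge ch$), see \cite{BrennerScott:2008}. Chaining the three estimates produces the asserted bound with a constant of the form $c_0^{2\abs{\alpha}}c_{\mathrm{inv}}$.

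The only delicate point, which I expect to be the main obstacle to a genuinely $\alpha$-uniform statement, is the dependence of this constant on $\alpha$ through the factor $c_0^{2\abs{\alpha}}$. Here I would use that $c_0=1+\OO(\kappa^{-1})$: on a cell of diameter $\OO(h)$ the Lipschitz bound $\abs{\nabla\tilde\rho}\le c$ from \cref{eq:prop_tilde_rho} forces $\tilde\rho$ to vary by at most $\OO(h)$, while $\tilde\rho\ge\kappa h$ everywhere by \cref{eq:tilde_rho}, so that $\max_{\bar K}\tilde\rho/\min_{\bar K}\tilde\rho\le 1+c/\kappa$. Choosing $\kappa$ sufficiently large (as is done throughout, independently of $h$) keeps $c_0$ close to $1$, so that for the fixed, bounded range of exponents $\alpha$ occurring in the applications the factor $c_0^{2\abs{\alpha}}$ is a harmless $h$-independent constant, and the stated estimate follows.
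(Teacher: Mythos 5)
Your argument is essentially identical to the paper's proof, which consists of the single line ``The proof follows by standard inverse estimates and \cref{eq:tilde_rho_local}'': freeze the weight on $K$ using the local comparability \cref{eq:tilde_rho_local}, cancel the constant factor, and invoke the classical inverse inequality on a quasi-uniform mesh. Your closing caveat is also well placed: true uniformity in $\alpha$ over all of $\R$ is not actually attainable (the constant must grow with $\abs{\alpha}$, as a weight like $\tilde\rho^{2\alpha}$ concentrates on a length scale $\sim h/\abs{\alpha}$ inside $K$ for large $\abs{\alpha}$), so the lemma should be read, as you do, with $\alpha$ ranging over the bounded set of exponents actually used later (such as $\pm\tfrac12$ and $\alpha+\beta-1$), for which your constant is uniform in $h$ and $K$.
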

\begin{proof}
The proof follows by standard inverse estimates and \cref{eq:tilde_rho_local}.
\end{proof}

The next result is a superconvergence type estimate. A similar one for a different weight (regularized distance to a point) can be found, e.g., in \cite[Lemma 3]{LeykekhmanD_VexlerB_2016c}. 
\begin{lemma}\label{FEM:lemma:superconvergence_ih}
	Let $\alpha,\beta \in \R$ be arbitrary. There is a constant $c>0$ independent of $\alpha$, $\beta$, and $h$ such that	
	\[
	\ltwonorm{\tilde \rho^\alpha(\operatorname{id}-i_h)(\tilde \rho^\beta v_h)} + h\ltwonorm{\tilde \rho^\alpha\nabla  (\operatorname{id}-i_h)(\tilde \rho^\beta v_h)} \le c h \ltwonorm{\tilde \rho^{\alpha+\beta-1}v_h}
	\]
	holds for all $v_h \in \widehat V_h$.
\end{lemma}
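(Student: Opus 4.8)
The plan is to reduce everything to the standard weighted interpolation estimate of \cref{FEM:lemma:est_interpolation_general_weight} applied to the single function $g = \tilde\rho^\beta v_h$, and then to estimate the resulting weighted second derivative by exploiting that $v_h$ is piecewise affine. Since $\tilde\rho \ge \kappa h > 0$ is smooth and $v_h$ is affine on each cell $K$, the function $g$ is continuous on $\bar\Omega$ and lies in $H^2(K)$ (indeed in $C^\infty(\bar K)$) on every cell, so the Lagrange interpolant $i_h g$ is well defined and \cref{FEM:lemma:est_interpolation_general_weight} may be applied cellwise and summed. This handles both terms on the left-hand side at once and yields
\[
\ltwonorm{\tilde \rho^\alpha(\operatorname{id}-i_h)(\tilde \rho^\beta v_h)} + h\ltwonorm{\tilde \rho^\alpha\nabla (\operatorname{id}-i_h)(\tilde \rho^\beta v_h)} \le c h^2 \ltwonorm{\tilde \rho^\alpha \nabla^2(\tilde \rho^\beta v_h)},
\]
so it remains only to bound the weighted second derivative on the right.

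Next I would compute $\nabla^2(\tilde\rho^\beta v_h)$ cellwise. Because $v_h$ is affine on each $K$ we have $\nabla^2 v_h = 0$ there, and the product rule gives
\[
\nabla^2(\tilde\rho^\beta v_h) = v_h\,\nabla^2(\tilde\rho^\beta) + \beta\tilde\rho^{\beta-1}\bigl(\nabla\tilde\rho\otimes\nabla v_h + \nabla v_h\otimes\nabla\tilde\rho\bigr),
\]
with $\nabla^2(\tilde\rho^\beta) = \beta(\beta-1)\tilde\rho^{\beta-2}\,\nabla\tilde\rho\otimes\nabla\tilde\rho + \beta\tilde\rho^{\beta-1}\nabla^2\tilde\rho$. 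Invoking the bounds $\abs{\nabla\tilde\rho}\le c$ and $\abs{\nabla^2\tilde\rho}\le c\tilde\rho^{-1}$ from \eqref{eq:prop_tilde_rho} produces the pointwise estimate
\[
\abs{\nabla^2(\tilde\rho^\beta v_h)} \le c\bigl(\tilde\rho^{\beta-2}\abs{v_h} + \tilde\rho^{\beta-1}\abs{\nabla v_h}\bigr),
\]
where $c$ collects the numerical prefactors $\beta$ and $\beta(\beta-1)$ stemming from differentiating $\tilde\rho^\beta$. Multiplying by $\tilde\rho^\alpha$ thus reduces the task to controlling $h^2\ltwonorm{\tilde\rho^{\alpha+\beta-2}v_h}$ and $h^2\ltwonorm{\tilde\rho^{\alpha+\beta-1}\nabla v_h}$.

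These two terms are handled by two independent mechanisms. For the gradient term I would apply the weighted inverse estimate of \cref{FEM:lemma:weighted_inverse} cellwise, $\norm{\tilde\rho^{\alpha+\beta-1}\nabla v_h}_{L^2(K)} \le c h^{-1}\norm{\tilde\rho^{\alpha+\beta-1}v_h}_{L^2(K)}$, which turns the $h^2$ into $h$ and removes the gradient, giving $c h\ltwonorm{\tilde\rho^{\alpha+\beta-1}v_h}$ after summation. For the remaining term I would use that the regularization forces $\tilde\rho = \sqrt{\rho^2+\kappa^2h^2}\ge\kappa h\ge h$ because $\kappa\ge1$, hence $h\tilde\rho^{-1}\le1$ pointwise; writing $h^2\tilde\rho^{\alpha+\beta-2} = h\,(h\tilde\rho^{-1})\,\tilde\rho^{\alpha+\beta-1}\le h\,\tilde\rho^{\alpha+\beta-1}$ yields $h^2\ltwonorm{\tilde\rho^{\alpha+\beta-2}v_h}\le h\ltwonorm{\tilde\rho^{\alpha+\beta-1}v_h}$. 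Combining the two bounds gives the claimed estimate.

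The main obstacle is the interplay in this last step. The naive interpolation bound carries the factor $h^2$ together with the \emph{wrong} weight exponent $\alpha+\beta-2$ and the gradient $\nabla v_h$, and it is not obvious that these collapse to the single target $h\,\tilde\rho^{\alpha+\beta-1}v_h$. The point is that each offending feature is repaired by exactly one discrete tool: the inverse estimate trades a power of $h^{-1}$ for lowering $\nabla v_h$ to $v_h$ while preserving the exponent, and the lower bound $\tilde\rho\ge\kappa h$ trades a power of $h$ for a power of $\tilde\rho$, so that both contributions land at weight exponent $\alpha+\beta-1$ and first order in $h$. This is precisely the superconvergence gain: the discreteness of $v_h$ (through $\nabla^2 v_h=0$ and the inverse estimate) together with the regularization of the weight by $\kappa h$ upgrade the expected second-order, exponent-reducing bound into a first-order estimate at exponent $\alpha+\beta-1$.
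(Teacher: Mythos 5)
Your proposal is correct and follows essentially the same route as the paper's own proof: a cellwise weighted interpolation estimate for $\tilde\rho^\beta v_h$, the pointwise bound $\abs{\nabla^2(\tilde\rho^\beta v_h)}\le c\,(\tilde\rho^{\beta-2}\abs{v_h}+\tilde\rho^{\beta-1}\abs{\nabla v_h})$ using $\nabla^2 v_h=0$ and \cref{eq:prop_tilde_rho}, then the weighted inverse estimate of \cref{FEM:lemma:weighted_inverse} for the gradient term and the lower bound $\tilde\rho\ge\kappa h$ for the zero-order term. The only cosmetic difference is that you invoke \cref{FEM:lemma:est_interpolation_general_weight} cellwise where the paper cites a local interpolation estimate directly, which amounts to the same thing.
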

\begin{proof}
	For any $K \in \T_h$, we obtain by a local interpolation estimate and \eqref{eq:prop_tilde_rho}
	\[
	\ltwonormk{\tilde \rho^\alpha(\operatorname{id}-i_h)(\tilde \rho^\beta v_h)} \le c h^2 \ltwonormk{\tilde \rho^\alpha \nabla^2(\tilde \rho^\beta v_h)}.
	\]
	Using the fact that $v_h \in \widehat V_h$ and thus affine linear on $K$ we get
	\[
	\abs{\nabla^2(\tilde \rho^\beta v_h)} \le \abs{\beta(\beta-1)} \tilde \rho^{\beta-2} \abs{\nabla \tilde \rho}^2 \abs{v_h} + \abs{\beta} \tilde \rho^{\beta-1} \abs{\nabla^2 \tilde \rho} v_h + 2 \abs{\beta} \tilde \rho^{\beta-1} \abs{\nabla \tilde \rho} \abs{\nabla v_h}.
	\]
	By the properties of $\tilde \rho$ from \cref{eq:prop_tilde_rho} we obtain
	\[
	\abs{\nabla^2(\tilde \rho^\beta v_h)} \le c \tilde \rho^{\beta-2}\abs{v_h} + c \tilde \rho^{\beta-1} \abs{\nabla v_h}.
	\]
	Thus, we have
	\[
	\ltwonormk{\tilde \rho^\alpha(\operatorname{id}-i_h)(\tilde \rho^\beta v_h)} \le c h^2 \ltwonormk{\tilde \rho^{\alpha +\beta-2}v_h} + ch^2 \ltwonormk{\tilde \rho^{\alpha+\beta-1} \nabla v_h}.
	\]
	For the first term we use $\tilde \rho \ge \kappa h$ from the definition \eqref{eq:tilde_rho} of $\tilde \rho$ and obtain
	\[
	h^2 \ltwonormk{\tilde \rho^{\alpha +\beta-2}v_h} \le ch \ltwonormk{\tilde \rho^{\alpha +\beta-1}v_h}.
	\]
	For the second term we use the weighted inverse estimate from \cref{FEM:lemma:weighted_inverse}. Thus,
	\[
	h^2 \ltwonormk{\tilde \rho^{\alpha+\beta-1} \nabla v_h} \le c h \ltwonormk{\tilde \rho^{\alpha+\beta-1} v_h}.
	\]
	Putting terms together we obtain the desired estimate. The proof for the weighted $H^1$ seminorm follows the same steps.
\end{proof}

Using the above weighted estimates we provide a proof of \cref{theorem:discrete_state_stab}.
\begin{proof}
	We define $\tilde u$ as the harmonic extension of $P_h^\partial q$, i.e., the solution to
\[
	-\Lap \tilde u = 0 \quad\text{in } \Omega,\quad 
	\tilde u = P_h^\partial q\quad\text{on } \partial \Omega.
\]
Since $P_h^\partial q \in V_h^\partial$ and $V_h^\partial \subset H^{\nhalf}(\partial \Omega)$ we have $\tilde u \in H^1(\Omega)$. By \cref{theorem:state} we get
\[
\ltwonorm{\rho^\half \nabla \tilde u} \le c \ltwonormdo{P_h^\partial q}.
\]
The proof now follows three major steps:
\begin{enumerate}
	\item In the first step, we show that we can replace $\rho$ by $\tilde \rho$ in the
	previous estimate. To this end, we consider the subdomain 
	\[
	\Omega_h = \Set{x \in \Omega | \rho(x) \le h}.
	\]
	On this subdomain we get $\tilde \rho(x) \le ch$ and therefore
	\[
	\begin{aligned}
		\norm{\tilde \rho^\half \nabla \tilde u}_{L^2(\Omega_h)}^2 &\le c h \norm{\nabla \tilde u}_{L^2(\Omega_h)}^2
		\le c h \ltwonorm{\nabla \tilde u}^2 \le ch \norm{P_h^\partial q}^2_{H^{\nhalf}(\partial \Omega)} \le c \ltwonormdo{P_h^\partial q}^2,
	\end{aligned}
	\]
	where we have used the $H^1$ estimate for $\tilde u$ from
	\cref{theorem:state} and an inverse estimate on the boundary. 
	On the compliment $\Omega \setminus \Omega_h$ we have $h \le \rho(x)$ and therefore $\tilde \rho(x) \le c \rho(x)$. Thus, \cref{theorem:state} implies
	\[
	\begin{aligned}
		\norm{\tilde \rho^\half \nabla \tilde u}_{L^2(\Omega \setminus \Omega_h)}^2 \le c \norm{\rho^\half \nabla \tilde u}_{L^2(\Omega \setminus \Omega_h)}^2
		& \le  c \ltwonorm{\rho^\half \nabla \tilde u}^2 \le c \ltwonormdo{P_h^\partial q}^2.
	\end{aligned}
	\] 
	Putting these two estimates together, we obtain
	\begin{equation}\label{FEM:eq_in_proof_harmonic_weighted:1}
		\ltwonorm{\tilde \rho^\half \nabla \tilde u} \le c \ltwonormdo{P_h^\partial q}.
	\end{equation}
	\item Since $\tilde u \in H^1(\Omega)$, we are allowed to apply
	the Scott-Zhang interpolation operator $\tilde i_h \colon H^1(\Omega) \to \widehat V_h$, see
	\cite{ScottZhang:1990}.  We will next show that the desired estimate holds for $\tilde i_h \tilde u$. Using the local behavior of the Scott-Zhang
	interpolation operator and \eqref{eq:tilde_rho_local}, we obtain for every cell $K\in \T_h$
	\[
	\norm{\tilde \rho^\half \nabla \tilde i_h \tilde u}_{L^2(K)} \le c \norm{\tilde
		\rho^\half\tilde u}_{L^2(\omega_K)} + c\norm{\tilde \rho^\half \nabla \tilde
		u}_{L^2(\omega_K)},
	\]
	where $\omega_K$ is a patch containing $K$. Summing up over all cells in the mesh and using shape regularity, we obtain
	\[
	\ltwonorm{\tilde \rho^\half \nabla \tilde i_h \tilde u} \le c \ltwonorm{\tilde \rho^\half\tilde u} + c\ltwonorm{\tilde \rho^\half \nabla \tilde u}.
	\]
	By boundedness of $\tilde \rho$ and \cref{theorem:state}, we get
	\[
	\ltwonorm{\tilde \rho^\half\tilde u} \le c \ltwonorm{\tilde u} \le c \norm{\tilde
		u}_{H^{\nhalf}(\Omega)} \le c \ltwonormdo{P_h^\partial q}.
	\]
	Using this and \eqref{FEM:eq_in_proof_harmonic_weighted:1}, we obtain
	\begin{equation}\label{FEM:eq_in_proof_harmonic_weighted:2}
		\ltwonorm{\tilde \rho^\half \nabla \tilde i_h \tilde u} \le c \ltwonormdo{P_h^\partial q}.
	\end{equation}
	\item For the last step of the proof, we set
	\[
	w_h = \tilde i_h \tilde u - u_h.
	\]
	It remains to prove an analogous estimate for $\ltwonorm{\tilde \rho^\half \nabla w_h}$. By the properties of the Scott-Zhang interpolation we have $w_h \in V_h$ (especially $w_h = 0$ on $\partial \Omega$) and
	\[
	(\nabla w_h,\nabla \phi_h) = (\nabla \tilde i_h \tilde u,\nabla \phi_h) \quad \text{for all } \phi_h \in V_h
	\]
	by equation \eqref{FEM:eq:poisson_inhomogenDirichlet} for $u_h$.  There holds
	\[
	\ltwonorm{\tilde \rho^\half \nabla w_h}^2 = (\tilde \rho \nabla w_h,\nabla w_h) = (\nabla(\tilde \rho w_h),\nabla w_h) - (w_h \nabla \tilde \rho,\nabla w_h) = I_1 + I_2.
	\]
	Since $\tilde \rho w_h$ vanishes on the boundary $\partial \Omega$ (as $w_h$), we have $i_h (\tilde \rho w_h) \in V_h$. Thus,
	\[
	\begin{split}
		I_1 &= (\nabla(\operatorname{id}-i_h)(\tilde \rho w_h),\nabla w_h) + (\nabla i_h (\tilde \rho w_h),\nabla w_h)\\
		&= (\nabla(\operatorname{id}-i_h)(\tilde \rho w_h),\nabla w_h) +  (\nabla i_h (\tilde \rho w_h),\nabla \tilde i_h \tilde u)\\
		& = I_{11} + I_{12}.
	\end{split}
	\]
	For $I_{11}$ we distribute powers of $\tilde \rho$ and obtain by the Cauchy-Schwarz inequality
	\[
	\begin{aligned}
		I_{11} &\le \ltwonorm{\tilde \rho^{-\half}\nabla(\operatorname{id}-i_h)(\tilde \rho w_h)} \ltwonorm{\tilde \rho^\half \nabla w_h}\\
		&\le \frac{1}{4} \ltwonorm{\tilde \rho^\half \nabla w_h}^2 + c \ltwonorm{\tilde \rho^{-\half}\nabla(\operatorname{id}-i_h)(\tilde \rho w_h)}^2\\
		&\le \frac{1}{4} \ltwonorm{\tilde \rho^\half \nabla w_h}^2 + c \ltwonorm{\tilde \rho^{-\half} w_h}^2,
	\end{aligned}
	\]
	where in the last step we used the superconvergence estimate from
	\cref{FEM:lemma:superconvergence_ih}. For $I_{12}$, we obtain using $\abs{\nabla \tilde \rho} \le
	c$ from \eqref{eq:prop_tilde_rho}
	\[
	\begin{aligned}
		I_{12} &= (\nabla (\tilde \rho w_h),\nabla \tilde i_h \tilde u) + (\nabla (i_h-\operatorname{id}) (\tilde \rho w_h),\nabla \tilde i_h \tilde u)\\
		& = (\tilde \rho \nabla w_h,\nabla \tilde i_h \tilde u) + (w_h \nabla \tilde \rho,\nabla \tilde i_h \tilde u) + (\nabla (i_h-\operatorname{id}) (\tilde \rho w_h),\nabla \tilde i_h \tilde u)\\
		& \le \ltwonorm{\tilde \rho^\half \nabla w_h} \ltwonorm{\tilde \rho^\half\nabla \tilde i_h \tilde u} + c \ltwonorm{\tilde \rho^{-\half}w_h} \ltwonorm{\tilde \rho^\half\nabla \tilde i_h \tilde u}\\
		& \quad + \ltwonorm{\tilde \rho^{-\half}\nabla(i_h-\operatorname{id})(\tilde \rho w_h)} \ltwonorm{\tilde \rho^\half\nabla \tilde i_h \tilde u}\\
		& \le \frac{1}{4} \ltwonorm{\tilde \rho^\half \nabla w_h}^2 + c \ltwonorm{\tilde \rho^{-\half}
			w_h}^2 + c\ltwonorm{\tilde \rho^\half\nabla \tilde i_h \tilde u}^2,
	\end{aligned}
	\]
	where we again used the super convergence estimate from \cref{FEM:lemma:superconvergence_ih}.
	Similar, we estimate $I_2$ as
	\[
	I_2 \le \frac{1}{4} \ltwonorm{\tilde \rho^\half \nabla w_h}^2 + c \ltwonorm{\tilde \rho^{-\half} w_h}^2.
	\]
	Putting terms together and absorbing terms involving $\ltwonorm{\tilde \rho^\half \nabla w_h}^2$ into the left-hand side we get
	\[
	\ltwonorm{\tilde \rho^\half \nabla w_h}^2 \le c \ltwonorm{\tilde \rho^{-\half} w_h}^2 + c\ltwonorm{\tilde \rho^\half\nabla \tilde i_h \tilde u}^2.
	\]
	The term $\ltwonorm{\tilde \rho^\half\nabla \tilde i_h \tilde u}$ is estimated in \eqref{FEM:eq_in_proof_harmonic_weighted:2}. It remains to estimate  $\ltwonorm{\tilde \rho^{-\half} w_h}$. We use $\tilde \rho \ge c h$ and obtain
	\[
	\ltwonorm{\tilde \rho^{-\half} w_h} \le c h^{-\half} \ltwonorm{w_h} \le  c h^{-\half} \ltwonorm{\tilde u - \tilde i_h \tilde u} + c h^{-\half} \ltwonorm{\tilde u - u_h}.
	\]
	For the interpolation error, we directly get
	\[
	c h^{-\half} \ltwonorm{\tilde u - \tilde i_h \tilde u} \le c h^{\half} \ltwonorm{\nabla \tilde u} \le
	c h^{\half} \norm{P_h^\partial q}_{H^{\nhalf}(\partial \Omega)} \le c \ltwonormdo{P_h^\partial q},
	\]
	where we used  again the $H^1$ estimate for $\tilde u$ and an
	inverse estimate. As $P_h^\partial (P_h^\partial q)=P_h^\partial q$, we obtain from \cref{theorem:state_error_est}
	\[
	c h^{-\half} \ltwonorm{\tilde u - u_h} \le c \ltwonormdo{P_h^\partial q}.
	\]
	Putting terms together yields
	\[
	\ltwonorm{\tilde \rho^{-\half} w_h} \le  c \ltwonormdo{P_h^\partial q}
	\]
	and therefore
	\[
	\ltwonorm{\tilde \rho^{\half} \nabla w_h} \le  c \ltwonormdo{P_h^\partial q}.
	\]
\end{enumerate}
Thus, we have proven
\[
	\ltwonorm{\tilde \rho^{\half} \nabla u_h}\le \ltwonorm{\tilde \rho^{\half} \nabla \tilde i_h \tilde u}+\ltwonorm{\tilde \rho^{\half} \nabla (u_h-\tilde i_h \tilde u)}\le c \ltwonormdo{P_h^\partial q}.
\]
\end{proof}

\section{Discretization of the optimal control problem}\label{sec5}
To discretize the optimal control problem \eqref{DirichletCon:eq:reduced_problem} we introduce the discrete reduced cost functional
\[
j_h \colon Q \to \R, \quad j_h(q) = J(q,S_h q),
\]
where $S_h \colon q \mapsto u_h(q)$ is the discrete solution operator defined by \eqref{FEM:eq:poisson_inhomogenDirichlet}. We discuss two concepts for control discretization: variational discretization and cellweise linear discretization. For the variational discretization we choose the discrete control space $Q_h$ as $Q_h = Q=L^2(\partial \Omega)$, cf. \cite{Hinze:2005} and \cite{DeckelnichGuentherHinze:2009} in the case of Dirichlet boundary control. For the cellweise linear discretization we choose $Q_h = V_h^\partial$. In both cases we define the discrete admissible set as $\Qadh = \Qad\cap Q_h$. This leads to the following discretized problem
\begin{equation}\label{DirichletCon:eq:reduced_problem_h}
	\text{minimize } j_h(q), \quad q \in \Qadh.
\end{equation}
As on the continuous level the functional $j_h$ is continuous and strictly convex. The existence and uniqueness of a discrete optimal control $\oq_h \in \Qadh$ follows by standard arguments for both choices of the control discretization. The corresponding discrete state $\ou_h = u_h(\oq_h)$ is referred as the discrete optimal state. The discrete reduced cost functional is two times continuously differentiable and the directional derivatives at $q\in Q$ in the direction $\dq \in Q$ are given as
\begin{equation}\label{eq:dj_h}
j'_h(q)(\dq) = (u_h-u_d,\du_h) + \alpha(q,\dq)_{\partial\Om}
\end{equation}
and
\begin{equation}\label{eq:ddj_h}
j''_h(q)(\dq,\dq) = \ltwonorm{\du_h}^2 + \alpha \ltwonormdo{\dq}^2,
\end{equation}
where $u_h = S_hq$ and $\du_h = S_h\dq$. We require also an adjoint representation of $j'_h(q)(\dq)$. Let $q\in Q$ and $u_h = S_h q$ be
the solution of the discrete state equation \eqref{FEM:eq:poisson_inhomogenDirichlet}. We define $z_h$ as
\begin{equation}\label{DirichletCon:eq:adjoint_weak_h}
	z_h \in V_h \quad:\quad (\nabla \phi_h,\nabla z_h) = (u_h-u_d,\phi_h) \quad \text{for all } \phi_h \in V_h.
\end{equation}
Similar to the discussion in \cref{DirichletCon:remark:dj_rep_domain} we obtain the following representation
\begin{equation}
	j'_h(q)(\dq) = (u_h-u_d,B_h P_h^\partial\dq)- (\nabla z_h,\nabla B_h P_h^\partial\dq)+ \alpha(q,\dq)_{\partial\Om},
\end{equation}
where $B_h \colon V_h^\partial \to \widehat V_h$ is an arbitrary discrete extension operator. The above representation suggests the definition of a so called \emph{discrete variational normal trace} of $z_h$ solving \cref{DirichletCon:eq:adjoint_weak_h}. We set
\begin{multline}\label{DirirchletCon:eq:var_partial_n_z_h}
	\partial_n^h z_h \in V_h^\partial \quad:\quad (\partial_n^h z_h,\psi_h)_{\partial \Omega} =
	(\nabla z_h,\nabla B_h \psi_h)\\-(u_h-u_d,B_h \psi_h) \quad \text{for all } \psi_h \in V_h^\partial,
\end{multline}
which leads to an equivalent representation
\[
j'_h(q)(\dq) = \alpha(q,\dq)_{\partial\Om} - (\partial_n^h z_h,P_h^\partial\dq)_{\partial\Om} = (\alpha q -\partial_n^h z_h,\dq)_{\partial\Om}.
\]
The last step holds due to the definition of the $L^2(\partial\Omega)$ projection. The following lemma provides a representation for $j'-j'_h$.
\begin{lemma}\label{DirichletCon:lemma:representation_dj-djh}
	Let $q,\dq\in Q$, let $u=u(q)$ be the solution of the continuous state equation and $u_h = u_h(q)$. Let $z$ be the corresponding continuous adjoint solving \eqref{DirichletCon:eq:adjoint_weak} and $z_h$ the discrete adjoint \eqref{DirichletCon:eq:adjoint_weak_h}.  There holds
	\begin{multline*}
		j'(q)(\dq)-j'_h(q)(\dq) = (\partial_n z - P_h^\partial \partial_n z,\dq)_{\partial \Omega}\\
		+ (u-u_h,S_h P_h^\partial \dq) - (\nabla(z-i_h z),\nabla S_h P_h^\partial \dq),
	\end{multline*}
	where $S_h \colon Q \to \widehat V_h$ is the discrete solution operator defined by \eqref{FEM:eq:poisson_inhomogenDirichlet} and $i_h$ is the Lagrange  interpolation operator.	
\end{lemma}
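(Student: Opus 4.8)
The plan is to reduce the difference of derivatives to a pure boundary pairing and then to re‑express that pairing through the discrete harmonic extension. Both adjoint representations are already available: the representation $j'(q)(\dq) = (\alpha q - \partial_n z,\dq)_{\partial\Omega}$ derived in \cref{sec3} and the analogous discrete one $j'_h(q)(\dq) = (\alpha q - \partial_n^h z_h,\dq)_{\partial\Omega}$ built from the discrete variational normal trace \eqref{DirirchletCon:eq:var_partial_n_z_h}. Subtracting them cancels the regularization term and gives
\[
j'(q)(\dq) - j'_h(q)(\dq) = (\partial_n^h z_h - \partial_n z,\dq)_{\partial\Omega}.
\]
Because $\partial_n^h z_h \in V_h^\partial$ and $P_h^\partial$ is the $L^2(\partial\Omega)$‑orthogonal projection onto $V_h^\partial$, I would replace $\dq$ by $P_h^\partial\dq$ in the discrete pairing and, using self‑adjointness of $P_h^\partial$, peel off the projection error of the continuous trace by writing $(\partial_n z,\dq)_{\partial\Omega}=(\partial_n z,P_h^\partial\dq)_{\partial\Omega}+(\partial_n z - P_h^\partial\partial_n z,\dq)_{\partial\Omega}$. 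This already produces the boundary term $(\partial_n z - P_h^\partial\partial_n z,\dq)_{\partial\Omega}$ and reduces the remaining work to the two pairings tested against $P_h^\partial\dq$.

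For those two pairings the key device is to use the discrete harmonic extension $\du_h := S_h P_h^\partial\dq \in \widehat V_h$, which is an admissible $H^1$ function with trace $P_h^\partial\dq$. On the discrete side I would evaluate \eqref{DirirchletCon:eq:var_partial_n_z_h} at $\psi_h=P_h^\partial\dq$ with the (admissible) choice $B_h=S_h$; since $B_h\psi_h=\du_h$ is discrete harmonic and $z_h\in V_h$, Galerkin orthogonality gives $(\nabla z_h,\nabla\du_h)=0$ and hence $(\partial_n^h z_h,P_h^\partial\dq)_{\partial\Omega}=-(u_h-u_d,\du_h)$. On the continuous side I would apply Green's identity to $z\in H^2(\Omega)\cap H^1_0(\Omega)$ against $\du_h$, using $-\Lap z = u-u_d$ and $\tau\du_h=P_h^\partial\dq$, to obtain $(\partial_n z,P_h^\partial\dq)_{\partial\Omega}=(\nabla z,\nabla\du_h)-(u-u_d,\du_h)$. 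Finally, since $z\in H^2(\Omega)\hookrightarrow C(\bar\Omega)$ vanishes on $\partial\Omega$, its Lagrange interpolant satisfies $i_h z\in V_h$, so $(\nabla i_h z,\nabla\du_h)=0$ and $(\nabla z,\nabla\du_h)=(\nabla(z-i_h z),\nabla\du_h)$. Combining these identities turns the difference of the $P_h^\partial\dq$‑pairings into exactly $(u-u_h,\du_h)-(\nabla(z-i_h z),\nabla\du_h)$, which together with the boundary term yields the claimed representation.

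The step requiring the most care is the continuous integration by parts. For general $\dq\in L^2(\partial\Omega)$ the continuous harmonic extension $\du=S\dq$ is only in $H^{\nhalf}(\Omega)$ and carries no $H^1$ gradient, so one cannot integrate by parts against $\du$ directly; the whole construction is arranged precisely so that every genuine use of Green's formula is against the discrete, and hence $H^1$‑regular, extension $\du_h$, while the continuous object is kept in the form of the well‑defined normal trace $\partial_n z\in H^{\nhalf}(\partial\Omega)$ (from \cref{PDE:theorem:normaltrace_estimate}). The remaining bookkeeping — inserting $P_h^\partial$ at the right places and invoking the two orthogonalities (Galerkin orthogonality of $z_h$ and $L^2(\partial\Omega)$‑orthogonality of the projection residual against $V_h^\partial$) — is routine, and since the boundary term enters the subsequent error estimates only through its modulus, its precise sign is immaterial there.
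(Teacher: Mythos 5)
Your proposal is correct and is essentially the paper's own argument, just organized in the reverse order: the paper first splits the direction as $\dq=(\dq-P_h^\partial\dq)+P_h^\partial\dq$ and then applies the boundary representations of $j'$, $j'_h$ to the first part and the domain representations (with $B=B_h=S_h$, via \cref{DirichletCon:remark:dj_rep_domain}) to the second, whereas you subtract the two boundary representations first and then peel off the projection error; your Green's identity for $z\in H^2(\Omega)\cap H^1_0(\Omega)$ against $S_h P_h^\partial\dq$ replaces the paper's appeal to \cref{DirichletCon:remark:dj_rep_domain}, and all the remaining ingredients (the admissible choice $B_h=S_h$, Galerkin orthogonality to trade $z_h$ for $i_h z$, orthogonality of $P_h^\partial$) coincide.

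One observation is worth recording. Carried out consistently, your algebra yields
\[
j'(q)(\dq)-j'_h(q)(\dq) = -\,(\partial_n z - P_h^\partial \partial_n z,\dq)_{\partial \Omega} + (u-u_h,S_h P_h^\partial \dq) - (\nabla(z-i_h z),\nabla S_h P_h^\partial \dq),
\]
i.e.\ the boundary term carries the opposite sign to the one in the statement of the lemma. This is not a defect of your proof but a sign slip in the paper: since the two representations give $j'(q)(\cdot)-j'_h(q)(\cdot)=(\partial_n^h z_h-\partial_n z,\cdot)_{\partial\Omega}$, the paper's quantity $I_1$ equals $(\partial_n^h z_h-\partial_n z,\dq-P_h^\partial\dq)_{\partial\Omega}=-(\partial_n z-P_h^\partial\partial_n z,\dq)_{\partial\Omega}$, while the paper's proof writes the two factors in reversed order. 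As you point out, the discrepancy is harmless downstream: \cref{lemma:dj-djh} and \cref{cor:dj-djh} only use the modulus of this term, so all subsequent estimates are unaffected.
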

\begin{proof}
	There holds
	\begin{multline*}
		j'(q)(\dq)-j'_h(q)(\dq) = \left(j'(q)(\dq-P_h^\partial \dq)-j'_h(q)(\dq-P_h^\partial \dq)\right)\\ + \left(j'(q)(P_h^\partial \dq)-j'_h(q)(P_h^\partial \dq)\right) = I_1 + I_2
	\end{multline*}
	For $I_1$ we have by the representations of $j'$ and $j'_h$
	\[
	\begin{aligned}
		I_1 &= (\partial_n z - \partial_n^h z_h,\dq-P_h^\partial \dq)_{\partial \Omega}\\
		&=(\partial_n z - P_h^\partial \partial_n z,\dq-P_h^\partial \dq)_{\partial \Omega}\\
		&=(\partial_n z - P_h^\partial \partial_n z,\dq)_{\partial \Omega},
	\end{aligned}
	\]
	where we have used the definition of the $L^2(\partial \Omega)$ projection $P_h^\partial$. For $I_2$ we use the representation of $j'$ from \cref{DirichletCon:remark:dj_rep_domain} and of $j'_h$  leading to
	\[
	I_2 = (u-u_d,B P_h^\partial\dq) - (\nabla BP_h^\partial\dq, \nabla z) - (u_h-u_d,B_h P_h^\partial\dq) + (\nabla B_h P_h^\partial\dq, \nabla z_h)
	\]
	with an arbitrary extension operator $B$ and discrete extension operator $B_h$. We choose $B_h$ as discrete harmonic extension, i.e. equal to the discrete solution operator $B_h = S_h$. Since $B$ is applied here only to discrete functions we may choose $B=B_h$. This results in
	\[
	\begin{aligned}
		I_2 &= (u-u_h,S_h P_h^\partial\dq) - (\nabla S_h P_h^\partial\dq, \nabla(z- z_h))\\
		&= (u-u_h,S_h P_h^\partial\dq) - (\nabla S_h P_h^\partial\dq, \nabla(z- i_h z)),
	\end{aligned}
	\]
	where in the last step we used the definition of the discrete solution operator $S_h$ and the fact that $z_h, i_h z \in V_h$. This completes the proof.
\end{proof}

Next lemma provides an estimate for $j'(\oq)-j'_h(\oq)$.
\begin{lemma}\label{lemma:dj-djh}
Let $0<s<s_\Omega$ with $s_\Omega$ from \eqref{eq:s_Omega}. Let $u_d \in H^1(\Omega)$. Let $(\oq,\ou,\oz)$ be the solution of the optimality system \eqref{DirichletCon:eq:opt_sys}. Then for all $\dq \in L^2(\partial \Omega)$ there holds
\[
\abs{j'(\oq)(\dq)-j'_h(\oq)(\dq)} \le c h^{\half+s}\norm{u_d}_{H^1(\Omega)} \norm{\dq}_{L^2(\partial \Omega)}
\]
with a constant $c>0$ independent of $h$, $u_d$ and $\dq$.
\end{lemma}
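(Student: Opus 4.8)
The plan is to start from the representation of $j'(\oq)(\dq)-j'_h(\oq)(\dq)$ furnished by \cref{DirichletCon:lemma:representation_dj-djh} (applied with $q=\oq$), which splits the difference into the three contributions
\[
T_1 = (\partial_n \oz - P_h^\partial \partial_n \oz,\dq)_{\partial \Omega},\quad T_2 = (\ou-\ou_h,S_h P_h^\partial \dq),\quad T_3 = -(\nabla(\oz-i_h \oz),\nabla S_h P_h^\partial \dq),
\]
where $\ou=u(\oq)$ and $\ou_h=u_h(\oq)$. I would bound each of them separately by $c\,h^{\half+s}\norm{u_d}_{H^1(\Omega)}\ltwonormdo{\dq}$ and then add up.

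For $T_1$ the relevant input is the boundary regularity $\partial_n \oz \in H^{\nhalf+s}(\partial \Omega)$ with $\norm{\partial_n \oz}_{H^{\nhalf+s}(\partial\Omega)}\le c\norm{u_d}_{H^1(\Omega)}$ from \cref{theorem:W22-s_oz}. A standard $L^2(\partial\Omega)$-projection estimate gives $\ltwonormdo{\partial_n \oz - P_h^\partial \partial_n \oz}\le c\,h^{\half+s}\norm{u_d}_{H^1(\Omega)}$, and Cauchy--Schwarz closes the term. For $T_2$ I would combine the uniform bound $\ltwonorm{S_h P_h^\partial \dq}\le c\ltwonormdo{\dq}$ of the discrete solution operator with the state error estimate \cref{theorem:state_error_est}. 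Since $\oq \in H^{\nhalf+s}(\partial\Omega)\hookrightarrow H^{s}(\partial\Omega)$ with $\norm{\oq}_{H^{s}(\partial\Omega)}\le c\norm{u_d}_{H^1(\Omega)}$ (again by \cref{theorem:W22-s_oz}), the error estimate applied with regularity exponent $s\in(0,\half]$ yields $\ltwonorm{\ou-\ou_h}\le c\,h^{\half+s}\norm{u_d}_{H^1(\Omega)}$, and Cauchy--Schwarz finishes the estimate.

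The main obstacle is $T_3$, because the adjoint state only enjoys the weighted regularity $\oz \in W^{2,2}_{-s}(\Omega)$ (its second derivatives may blow up towards the edges), while the discrete state $S_h P_h^\partial \dq$ is controlled only in a weighted $H^1$ sense through \cref{theorem:discrete_state_stab}. The device is to distribute the regularized weight $\tilde\rho$: writing $T_3 = -(\tilde\rho^{-\half}\nabla(\oz-i_h\oz),\,\tilde\rho^{\half}\nabla S_h P_h^\partial \dq)$ and applying Cauchy--Schwarz, the factor $\ltwonorm{\tilde\rho^{\half}\nabla S_h P_h^\partial \dq}$ is bounded directly by $c\ltwonormdo{\dq}$ via \cref{theorem:discrete_state_stab}. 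For the remaining factor I would invoke the weighted interpolation estimate \cref{FEM:lemma:est_interpolation_general_weight} with $\alpha=-\half$ to obtain $\ltwonorm{\tilde\rho^{-\half}\nabla(\oz-i_h\oz)}\le c\,h\,\ltwonorm{\tilde\rho^{-\half}\nabla^2 \oz}$, and then trade the $-\half$ weight for the available $-s$ weight: since $\kappa\ge1$ forces $\tilde\rho\ge h$ and also $\tilde\rho\ge\rho$, while $s<\half$, one has $\tilde\rho^{-\half}=\tilde\rho^{s-\half}\tilde\rho^{-s}\le c\,h^{s-\half}\rho^{-s}$, so that $\ltwonorm{\tilde\rho^{-\half}\nabla^2\oz}\le c\,h^{s-\half}\norm{\oz}_{W^{2,2}_{-s}(\Omega)}\le c\,h^{s-\half}\norm{u_d}_{H^1(\Omega)}$.

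Multiplying the two factors in $T_3$ then gives $\abs{T_3}\le c\,h^{\half+s}\norm{u_d}_{H^1(\Omega)}\ltwonormdo{\dq}$, and collecting $T_1$, $T_2$, $T_3$ completes the proof. The crux lies entirely in this weight bookkeeping for $T_3$: it is what allows the edge singularity of the adjoint (captured by the $\rho^{-s}$ weight) and the boundary irregularity of the discrete harmonic extension (captured by the $\tilde\rho^{\half}$ weight) to be paired against each other without degrading the rate below $h^{\half+s}$, the extra power $h^{s}$ over the naive $h^{\half}$ being exactly the gain extracted from $\tilde\rho\ge\kappa h$.
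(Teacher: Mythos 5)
Your proposal is correct and follows essentially the same route as the paper: the same three-term decomposition from \cref{DirichletCon:lemma:representation_dj-djh}, the same $L^2(\partial\Omega)$-projection estimate for the boundary term, and the identical weighted Cauchy--Schwarz argument for the critical third term (interpolation with $\alpha=-\half$ via \cref{FEM:lemma:est_interpolation_general_weight}, the weight trade $\tilde\rho^{-\half}\le c\,h^{s-\half}\rho^{-s}$, and \cref{theorem:discrete_state_stab}). The only cosmetic difference is the middle term, where you apply \cref{theorem:state_error_est} with exponent $s$ (giving $h^{\half+s}$) while the paper applies it with exponent $\half$ (giving the full power $h$); both suffice for the claimed rate.
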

\begin{proof}
	By the representation from \cref{DirichletCon:lemma:representation_dj-djh} we obtain
	\begin{multline*}
		j'(\oq)(\dq)-j'_h(\oq)(\dq) = (\partial_n \oz - P_h^\partial \partial_n \oz,\dq)_{\partial \Omega}\\
		+ (\ou-\tilde u_h,S_h P_h^\partial \dq) - (\nabla(\oz-i_h \oz),\nabla S_h P_h^\partial \dq),
	\end{multline*}
	where $\tilde u_h = S_h \oq$ is the discrete solution to the continuous optimal control.
	 By \cref{theorem:W22-s_oz} we have 
	\[
	\oz \in W^{2,2}_{-s}(\Omega) \quad\text{and}\quad \partial_n \oz, \oq \in H^{\half + s}(\partial \Omega)
	\]
	and the corresponding estimates
	\[
	\norm{\oz}_{W^{2,2}_{-s}(\Omega)} + \norm{\partial_n \oz}_{H^{\half + s}(\partial \Omega)} + \norm{\oq}_{H^{\half + s}(\partial \Omega)} \le c \norm{u_d}_{H^1(\Omega)}
	\]
	hold. The first term in the above representation of $j'(\oq)-j'_h(\oq)$ is directly estimated by standard estimates for the $L^2(\partial\Omega)$ projection as
	\[
	\begin{aligned}
		\abs{(\partial_n \oz - P_h^\partial \partial_n \oz,\dq)_{\partial \Omega}} &\le ch^{\half+s} \norm{\partial_n \oz}_{H^{\half + s}(\partial \Omega)} \ltwonormdo{\dq}\\
		& \le ch^{\half+s} \norm{u_d}_{H^1(\Omega)} \ltwonormdo{\dq}.
	\end{aligned}
	\]
	The term $(\ou-\tilde u_h,S_h P_h^\partial \dq)$ is estimated by \cref{theorem:state_error_est} and by the stability of $S_h$ as 
	\[
	\begin{aligned}
		\abs{(\ou-\tilde u_h,S_h P_h^\partial \dq)}  &\le \ltwonorm{\ou-\tilde u_h} \ltwonorm{S_h P_h^\partial \dq}\\
		&\le c h \norm{\oq}_{H^{\nhalf}(\partial \Omega)} \norm{\dq}_{L^2(\partial \Omega)}\\
		& \le c h \norm{u_d}_{H^1(\Omega)} \norm{\dq}_{L^2(\partial \Omega)}.
	\end{aligned}
	\]
	For the term involving $\oz-i_h \oz$ we use the weight $\tilde \rho$, see \eqref{eq:tilde_rho}, and obtain
	\[
	\begin{aligned}
		\abs{(\nabla(\oz-i_h \oz),\nabla S_h P_h^\partial \dq)} &= \abs{(\tilde \rho^{-\half}\nabla(\oz-i_h \oz),\tilde \rho^\half\nabla S_h P_h^\partial \dq)}\\
		& \le \ltwonorm{\tilde \rho^{-\half}\nabla(\oz-i_h \oz)} \ltwonorm{\tilde \rho^\half\nabla S_h P_h^\partial \dq}.
	\end{aligned}
	\]
	The weighted interpolation error is estimated by \cref{FEM:lemma:est_interpolation_general_weight}
	\[
	\begin{aligned}
		\ltwonorm{\tilde \rho^{-\half}\nabla(\oz-i_h \oz)} &\le ch \ltwonorm{\tilde \rho^{-\half}\nabla^2 \oz}\\
		&\le c h \linfnorm{\tilde \rho^{-\half+s}} \ltwonorm{\tilde \rho^{-s}\nabla^2 \oz}\\
		&\le c h (\kappa h)^{-\half+s} \ltwonorm{\rho^{-s}\nabla^2 \oz}\\
		&\le ch^{\half + s} \norm{\oz}_{W^{2,2}_{-s}(\Omega)}\\
		&\le  ch^{\half + s} \norm{u_d}_{H^1(\Omega)},
	\end{aligned}
	\]
	where we also used $\tilde \rho \ge \kappa h$, $s< \half$ and $\tilde \rho \ge \rho$.
	Using \cref{theorem:discrete_state_stab} and the stability of the $P_h^\partial$ in $L^2(\partial\Omega)$ we get
	\[
	\ltwonorm{\tilde \rho^\half\nabla S_h P_h^\partial \dq} \le c \ltwonormdo{P_h^\partial \dq} \le c \ltwonormdo{\dq}
	\]
	resulting in
	\[
	\abs{(\nabla(\oz-i_h \oz),\nabla S_h P_h^\partial \dq)} \le ch^{\half + s} \norm{u_d}_{H^1(\Omega)}  \ltwonormdo{\dq}.
	\]
	This completes the proof.
\end{proof}

Under the assumption on $\Omega$ that $\lambda_\Omega>\frac{3}{2}$ we obtain the following corollary.

\begin{corollary}\label{cor:dj-djh}
Let the critical exponent $\lambda_\Omega$ from \eqref{eq:lambda_Omega} fulfill $\lambda_\Omega>\frac{3}{2}$. Then there holds in the setting of \cref{lemma:dj-djh}
\[
\abs{j'(\oq)(\dq)-j'_h(\oq)(\dq)} \le c h\lh\norm{u_d}_{H^1(\Omega)} \norm{\dq}_{L^2(\partial \Omega)}.
\]
\end{corollary}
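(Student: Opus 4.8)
The plan is to revisit the three–term decomposition of $j'(\oq)(\dq)-j'_h(\oq)(\dq)$ obtained in the proof of \cref{lemma:dj-djh} through \cref{DirichletCon:lemma:representation_dj-djh}, namely
\[
j'(\oq)(\dq)-j'_h(\oq)(\dq) = (\partial_n \oz - P_h^\partial \partial_n \oz,\dq)_{\partial \Omega} + (\ou-\tilde u_h,S_h P_h^\partial \dq) - (\nabla(\oz-i_h \oz),\nabla S_h P_h^\partial \dq),
\]
and to sharpen each contribution by exploiting the stronger regularity available under $\lambda_\Omega>\frac32$. The middle term was already bounded by $ch\norm{u_d}_{H^1(\Omega)}\ltwonormdo{\dq}$ in \cref{lemma:dj-djh} and needs no modification. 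For the first term, I would replace the use of $\partial_n\oz\in H^{\nhalf+s}(\partial\Omega)$ by the full $H^1(\partial\Omega)$ regularity supplied by \cref{DirichletCon:theorem:W22-s_oz_with_s_to_12}; the standard $L^2(\partial\Omega)$–projection error estimate then yields the optimal order
\[
\abs{(\partial_n \oz - P_h^\partial \partial_n \oz,\dq)_{\partial \Omega}} \le ch\norm{\partial_n\oz}_{H^1(\partial\Omega)}\ltwonormdo{\dq}\le ch\norm{u_d}_{H^1(\Omega)}\ltwonormdo{\dq}.
\]

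The crux is the third term, where the only loss in \cref{lemma:dj-djh} came from the fixed choice $s<s_\Omega$. As there, \cref{theorem:discrete_state_stab} and the $L^2(\partial\Omega)$–stability of $P_h^\partial$ give $\ltwonorm{\tilde \rho^\half\nabla S_h P_h^\partial \dq}\le c\ltwonormdo{\dq}$, while the weighted interpolation estimate from \cref{FEM:lemma:est_interpolation_general_weight} combined with $\tilde\rho\ge\kappa h$ and $\tilde\rho\ge\rho$ produces
\[
\ltwonorm{\tilde \rho^{-\half}\nabla(\oz-i_h \oz)} \le ch^{\half+s}\norm{\oz}_{W^{2,2}_{-s}(\Omega)}
\]
for every $0<s<\half$. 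Since $\lambda_\Omega>\frac32$ forces $s_\Omega=\half$, \cref{DirichletCon:theorem:W22-s_oz_with_s_to_12} now provides $\norm{\oz}_{W^{2,2}_{-s}(\Omega)}\le\frac{c}{1-2s}\norm{u_d}_{H^1(\Omega)}$ for \emph{all} $s<\half$, with a constant independent of $s$. Hence
\[
\abs{(\nabla(\oz-i_h \oz),\nabla S_h P_h^\partial \dq)} \le \frac{c}{1-2s}\,h^{\half+s}\,\norm{u_d}_{H^1(\Omega)}\,\ltwonormdo{\dq}.
\]

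The main (and only genuinely new) step is then to optimize the free parameter $s$ in the factor $h^{\half+s}/(1-2s)$. Writing $s=\half-t$ with $t\in(0,\half)$ gives $h^{\half+s}=h\,h^{-t}=h\,e^{t\lh}$ and $1-2s=2t$, so the factor equals $\tfrac12\,h\,t^{-1}e^{t\lh}$. Choosing $t=\lh^{-1}$, which is admissible for all sufficiently small $h$, makes $e^{t\lh}=e$ and $t^{-1}=\lh$, leaving the bound $ch\lh$; for the finitely many coarser meshes the claim is absorbed into the constant. I expect the delicate point to be precisely this balancing: one must retain the explicit $s$–dependence of the weighted regularity constant, as recorded in \cref{DirichletCon:theorem:W22-s_oz_with_s_to_12}, in order to trade the algebraic gain $h^{s}$ against the blow-up $(1-2s)^{-1}$ and recover the logarithmic factor $\lh$. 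Collecting the three contributions completes the proof.
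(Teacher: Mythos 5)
Your proposal is correct and follows essentially the same route as the paper: the same three-term decomposition from \cref{DirichletCon:lemma:representation_dj-djh}, the $H^1(\partial\Omega)$ regularity of $\partial_n\oz$ from \cref{DirichletCon:theorem:W22-s_oz_with_s_to_12} for the projection term, the unchanged bound for the middle term, and the balancing of $h^{\half+s}/(1-2s)$ via $s=\half-\lh^{-1}$ (the paper makes exactly this choice, observing $h^{-\lh^{-1}}=\mathrm{e}$). Your additional remark that the choice of $s$ is only admissible for sufficiently small $h$, with coarse meshes absorbed into the constant, is a point the paper leaves implicit.
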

\begin{proof}
	We start again by the representation from \cref{DirichletCon:lemma:representation_dj-djh}
	\begin{multline*}
		j'(\oq)(\dq)-j'_h(\oq)(\dq) = (\partial_n \oz - P_h^\partial \partial_n \oz,\dq)_{\partial \Omega}\\
		+ (\ou-\tilde u_h,S_h P_h^\partial \dq) - (\nabla(\oz-i_h \oz),\nabla S_h P_h^\partial \dq).
	\end{multline*}
From \cref{DirichletCon:theorem:W22-s_oz_with_s_to_12} we have (since $\lambda_\Omega>\frac{3}{2}$)
\[
\norm{\oz}_{W^{2,2}_{-s}}(\Omega) \le \frac{c}{1-2s} \norm{u_d}_{H^1(\Omega)}
\]
for all $s< \half$ as well as
\[
\norm{\partial_n \oz}_{H^1(\partial \Omega)} +\norm{\oq}_{H^1(\partial \Omega)} \le  c \norm{u_d}_{H^1(\Omega)}.
\]
Thus, the first term in the above representation can be estimated as
\[
(\partial_n \oz - P_h^\partial \partial_n \oz,\dq)_{\partial \Omega} \le c h \norm{\dq}_{L^2(\partial \Omega)} \norm{\partial_n \oz}_{H^1(\partial \Omega)} \le ch \norm{u_d}_{H^1(\Omega)} \norm{\dq}_{L^2(\partial \Omega)}.
\]
The second term is estimated as in the previous theorem leading to
\[
\abs{(\ou-\tilde u_h,S_h P_h^\partial \dq)} \le ch \norm{u_d}_{H^1(\Omega)} \norm{\dq}_{L^2(\partial \Omega)}.
\]
For the third term we get following the arguments in the proof of the previous theorem
\[
	\abs{(\nabla(\oz-i_h \oz),\nabla S_h P_h^\partial \dq)} \le \frac{c}{1-2s}h^{\half + s} \norm{u_d}_{H^1(\Omega)}  \ltwonormdo{\dq}
\] 
for every $s< \half$. The choice $s = \half - \lh^{-1}$ provides the desired estimate by observing that
\[
	1-2s= \abs{\ln h}^{-1}\quad\text{and}\quad h^{-\abs{\ln h}^{-1}}= h^{(\ln h)^{-1}}= \mathrm{e}^{\ln h (\ln h)^{-1} } = \mathrm{e}.
\]
\end{proof}

\subsection{Variational discretization}
In this section we consider the choice $Q_h=Q=L^2(\partial \Omega)$ and correspondingly $\Qadh=\Qad$ in \eqref{DirichletCon:eq:reduced_problem_h}. In this case the discrete optimal control $\oq_h$ is characterized by
\[
\oq_h \in \Qad \quad : \quad j'_h(\oq_h)(\dq-\oq_h) \ge 0 \quad \text{for all } \dq \in \Qad.
\]
The next theorem provides an error estimate for this choice of discretization concept.
\begin{theorem}\label{theorem:q_error_est_var}
Let $\oq$ be the solution of \eqref{DirichletCon:eq:reduced_problem} and $\oq_h$ the solution of \eqref{DirichletCon:eq:reduced_problem_h} for the choice $Q_h=Q$. Let $0<s<s_\Omega$ with $s_\Omega$ from \eqref{eq:s_Omega} and $u_d \in H^1(\Omega)$. Then, there holds
\[
\ltwonormdo{\oq-\oq_h} \le c h^{\half+s}\norm{u_d}_{H^1(\Omega)}
\]
with a constant $c>0$ independent of $h$ and $u_d$. If $\Omega$ fulfills $\lambda_\Omega > \frac{3}{2}$ then there holds
\[
\ltwonormdo{\oq-\oq_h} \le c h\lh\norm{u_d}_{H^1(\Omega)}.
\]
\end{theorem}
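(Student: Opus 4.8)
The plan is to combine the strong convexity of the discrete reduced cost functional with the two variational inequalities characterising $\oq$ and $\oq_h$, and then to invoke the consistency estimate from \cref{lemma:dj-djh}. Since essentially all of the analytic difficulty (weighted $H^2$ regularity, the discrete weighted stability bound, and the weighted superconvergence estimate) has already been absorbed into that lemma and \cref{cor:dj-djh}, what remains is a standard, if sign-sensitive, convexity argument that is uniform in $h$.

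First I would use that, for the choice $Q_h = Q$, both controls are characterised by variational inequalities over the \emph{same} admissible set $\Qad$: the continuous condition \eqref{eq:opt_cond} and the discrete one $j'_h(\oq_h)(\dq - \oq_h) \ge 0$ for all $\dq \in \Qad$. Testing the continuous inequality with $\dq = \oq_h$ gives $j'(\oq)(\oq - \oq_h) \le 0$, while testing the discrete inequality with $\dq = \oq$ gives $j'_h(\oq_h)(\oq - \oq_h) \ge 0$. Since $j_h$ is quadratic, its second derivative is constant, so \eqref{eq:ddj_h} yields the coercivity
\[
\alpha \ltwonormdo{\oq - \oq_h}^2 \le j'_h(\oq)(\oq - \oq_h) - j'_h(\oq_h)(\oq - \oq_h).
\]
Discarding the nonnegative term $j'_h(\oq_h)(\oq - \oq_h)$, then adding and subtracting $j'(\oq)(\oq - \oq_h)$ and using $j'(\oq)(\oq - \oq_h) \le 0$, reduces the right-hand side to the consistency error, i.e.
\[
\alpha \ltwonormdo{\oq - \oq_h}^2 \le \abs{j'(\oq)(\oq - \oq_h) - j'_h(\oq)(\oq - \oq_h)}.
\]

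The final step is to apply \cref{lemma:dj-djh} with direction $\dq = \oq - \oq_h \in L^2(\partial \Omega)$, which bounds the right-hand side by $c h^{\half + s}\norm{u_d}_{H^1(\Omega)}\,\ltwonormdo{\oq - \oq_h}$. Cancelling one factor of $\ltwonormdo{\oq - \oq_h}$ (the case $\oq = \oq_h$ being trivial) and absorbing $\alpha$ into the constant gives the first claimed estimate. For the improved rate under the additional assumption $\lambda_\Omega > \frac{3}{2}$, I would repeat the identical chain but replace \cref{lemma:dj-djh} by \cref{cor:dj-djh}, which supplies the factor $h\lh$ in place of $h^{\half + s}$. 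The only point demanding care is the bookkeeping of signs between the two variational inequalities; since the coercivity constant $\alpha$ is independent of $h$ and the consistency estimate is already established, no further regularity or discretisation arguments enter at this stage.
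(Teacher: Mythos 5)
Your proposal is correct and follows essentially the same argument as the paper: coercivity of $j''_h$ from \eqref{eq:ddj_h} combined with the affine linearity of $j'_h$, the two variational inequalities tested with $\oq_h$ and $\oq$ respectively (valid since $\Qadh=\Qad$ here), and then \cref{lemma:dj-djh} (resp.\ \cref{cor:dj-djh}) applied with $\dq=\oq-\oq_h$ followed by cancellation of one factor of $\ltwonormdo{\oq-\oq_h}$. The sign bookkeeping you carry out matches the paper's chain $-j'_h(\oq_h)(\oq-\oq_h)\le 0\le -j'(\oq)(\oq-\oq_h)$ exactly.
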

\begin{proof}
From the representation \eqref{eq:ddj_h} of $j''_h$ we obtain
\[
\alpha \ltwonormdo{\oq-\oq_h}^2 \le j''_h(\oq)(\oq-\oq_h,\oq-\oq_h) = j'_h(\oq)(\oq-\oq_h) - j'_h(\oq_h)(\oq-\oq_h)
\]
by affine linearity of $j'_h(q)$ with respect to $q$. By the fact that $\oq_h$ is admissible for the continuous problem and $\oq$ is admissible for the discrete we obtain by both (continuous and discrete) optimality conditions
\[
- j'_h(\oq_h)(\oq-\oq_h) \le 0 \le - j'(\oq)(\oq-\oq_h).
\]
This results in
\[
\alpha \ltwonormdo{\oq-\oq_h}^2 \le j'_h(\oq)(\oq-\oq_h) - j'(\oq)(\oq-\oq_h).
\]
An application of \cref{lemma:dj-djh} and \cref{cor:dj-djh} leads to the desired estimates.
\end{proof}

\subsection{Cellwise linear discretization}
In this section we consider the choice $Q_h=V_h^\partial$ and correspondingly $\Qadh=\Qad\cap V_h^\partial$ in \eqref{DirichletCon:eq:reduced_problem_h}. In this case the discrete optimal control $\oq_h$ is characterized by
\[
\oq_h \in \Qadh \quad : \quad j'_h(\oq_h)(\dq_h-\oq_h) \ge 0 \quad \text{for all } \dq_h \in \Qadh.
\]
To proceed we require an interpolation operator on $Q$, which preserves admissibility and have improved approximation property in some negative norms. The $L^2(\partial \Omega)$ projection on $V_h^\partial$ can not be used, since it does not preserve the admissibility of functions from $\Qad$. We consider the quasi-interpolation $\pi_h \colon L^1(\partial \Omega) \to V_h^\partial$ as introduced in \cite{Carstensen:1999}. In \cite{Carstensen:1999} this operator is defined on the domain and we use it on the boundary, but the definition (and the properties) are very similar. We consider the set ${\cal N}_h^\partial$ of all nodes of the the mesh $\T_h$ on the boundary $\partial \Omega$. For every node $y \in {\cal N}_h^\partial$ let $\phi_y\in V_h^\partial$ be the local (nodal) basis function (hat-function) on the boundary. The operator $\pi_h$ is defined as
\[
\pi_h q = \sum_{y \in {\cal N}_h^\partial} \frac{(q,\phi_y)_{\partial \Omega}}{(1,\phi_y)_{\partial \Omega}} \phi_y.
\]
The definition directly implies that for every $q \in \Qad$ we have $\pi_h q \in \Qadh$ (note that $q_a$ and $q_b$ are constants).
\begin{lemma}\label{lemma:est_pi_h}
Let $q\in H^t(\partial \Omega)$ for some $0\le t \le 1$. Then there holds
\[
\ltwonormdo{q-\pi_h q} \le c h^t \norm{q}_{H^t(\partial \Omega)}
\]
and
\[
\norm{q-\pi_h q}_{H^{-t}(\partial \Omega)} \le c h^{2t} \norm{q}_{H^t(\partial \Omega)}
\]
with a constant $c>0$ independent of $h$ and $q$.
\end{lemma}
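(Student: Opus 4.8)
The plan is to derive both inequalities from the standard toolkit for the Carstensen quasi-interpolation \cite{Carstensen:1999}, transferred from the domain to the surface mesh induced on $\partial\Omega$; this transfer is routine once one notes that the boundary mesh is shape regular and quasi-uniform, that the $\phi_y$ form a nonnegative partition of unity, and that each patch $\omega_y=\supp\phi_y$ has diameter comparable to $h$. The three structural facts I would record first are: (i) $L^2(\partial\Omega)$- and $H^1(\partial\Omega)$-stability, $\ltwonormdo{\pi_h q}\le c\ltwonormdo{q}$ and $\norm{\pi_h q}_{H^1(\partial\Omega)}\le c\norm{q}_{H^1(\partial\Omega)}$, both following from the locality of the nodal weights together with an inverse estimate; (ii) reproduction of constants, $\pi_h 1=1$, which holds because the weights sum up to the partition of unity; and (iii) the local first-order bound $\norm{q-\pi_h q}_{L^2(K)}\le c h\,\norm{\nabla q}_{L^2(\omega_K)}$ on each boundary cell $K$, obtained from (ii) by subtracting a local constant and invoking a Bramble--Hilbert/Poincar\'e argument on the patch.

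For the first inequality I would argue at the two endpoints and interpolate. At $t=0$ it is exactly the $L^2$-stability (i). At $t=1$, summing the local bound (iii) over all boundary cells and using the finite overlap of the patches gives $\ltwonormdo{q-\pi_h q}\le c h\,\norm{q}_{H^1(\partial\Omega)}$. Real interpolation of the operator $\operatorname{id}-\pi_h$ between $H^0(\partial\Omega)$ and $H^1(\partial\Omega)$ then yields $\ltwonormdo{q-\pi_h q}\le c h^{t}\norm{q}_{H^t(\partial\Omega)}$ for all $0\le t\le 1$.

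For the negative-norm estimate I would use duality, writing
\[
\norm{q-\pi_h q}_{H^{-t}(\partial\Omega)}=\sup_{0\ne\phi\in H^t(\partial\Omega)}\frac{(q-\pi_h q,\phi)_{\partial\Omega}}{\norm{\phi}_{H^t(\partial\Omega)}},
\]
and splitting the numerator with the interpolant of the dual variable,
\[
(q-\pi_h q,\phi)_{\partial\Omega}=(q-\pi_h q,\phi-\pi_h\phi)_{\partial\Omega}+(q-\pi_h q,\pi_h\phi)_{\partial\Omega}.
\]
The first term is immediately of the right order: applying the already proven first estimate to both factors gives $\ltwonormdo{q-\pi_h q}\,\ltwonormdo{\phi-\pi_h\phi}\le c h^{2t}\norm{q}_{H^t(\partial\Omega)}\norm{\phi}_{H^t(\partial\Omega)}$.

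The second term is the genuine obstacle, and it is where a proof copied from the $L^2(\partial\Omega)$ projection would break down: there Galerkin orthogonality makes $(q-\pi_h q,v_h)_{\partial\Omega}$ vanish for $v_h\in V_h^\partial$, but no such orthogonality is available for the Carstensen operator. To recover the extra factor $h^{t}$ I would work at $t=1$ and exploit the two defining virtues of $\pi_h$, namely the local reproduction of constants and the conservation of mass $\int_{\partial\Omega}(q-\pi_h q)=0$ (which follows directly from the definition of the nodal weights). Concretely, on each boundary cell $K$ I split $\pi_h\phi$ into its mean $\overline{(\pi_h\phi)}_K$ and its fluctuation: the fluctuation pairs against $q-\pi_h q$ with the gain $\norm{\pi_h\phi-\overline{(\pi_h\phi)}_K}_{L^2(K)}\le c h\,\norm{\nabla\pi_h\phi}_{L^2(K)}$, while the piecewise constant contributions $\sum_K\overline{(\pi_h\phi)}_K\int_K(q-\pi_h q)$ are treated by an Abel/summation-by-parts argument that uses $\sum_K\int_K(q-\pi_h q)=0$ and the $O(h\,\norm{\nabla\pi_h\phi})$ cell-to-cell variation of $\pi_h\phi$. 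Together with $\ltwonormdo{q-\pi_h q}\le c h\,\norm{q}_{H^1(\partial\Omega)}$ and the $H^1$-stability of $\pi_h$, this yields $\abs{(q-\pi_h q,\pi_h\phi)_{\partial\Omega}}\le c h^2\,\norm{q}_{H^1(\partial\Omega)}\norm{\phi}_{H^1(\partial\Omega)}$, and interpolating the resulting bilinear bound in both arguments down to $0\le t\le 1$ completes the proof. I expect this cancellation in the discrete cross term --- a superapproximation property of $\pi_h$ in the spirit of Carstensen's a posteriori analysis --- to be the only delicate step, everything else being local and standard.
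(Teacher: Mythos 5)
Your first estimate and the duality skeleton for the second one (splitting against $\pi_h\phi$, treating $(q-\pi_h q,\phi-\pi_h\phi)_{\partial\Omega}$ by two applications of the $L^2$ bound, interpolating at the end) coincide with the standard route; the paper itself simply points to \cite[Theorem 3.1]{Carstensen:1999}, \cite[Lemma 4.5]{ReyesMeyerVexler:2008} and \cite[Lemma 2.14]{ApelNicaisePfefferer:2016} for exactly these steps. The gap sits in the one step you yourself flag as delicate, the cross term $(q-\pi_h q,\pi_h\phi)_{\partial\Omega}$. The mechanism you propose for it --- cell means, the global mass conservation $\sum_K\int_K(q-\pi_h q)=0$, and Abel summation over cells --- cannot produce the factor $h^2$. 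After summation by parts along any ordering $K_1,K_2,\dots,K_N$ of the boundary cells, the partial sums $M_j=\sum_{i\le j}\int_{K_i}(q-\pi_h q)$ are not small: the only structural input you use is that $M_N=0$, so each $M_j$ can merely be bounded through $\norm{q-\pi_h q}_{L^1(\partial\Omega)}\le c h\norm{q}_{H^1(\partial\Omega)}$; there are $O(h^{-2})$ of them, and the adjacent differences of the cell means cost an inverse estimate for $\norm{\nabla\pi_h\phi}_{L^\infty}$. Counting powers of $h$, this accounting yields a bound of order $O(1)\,\norm{q}_{H^1(\partial\Omega)}\norm{\phi}_{H^1(\partial\Omega)}$, not $O(h^2)$. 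Global conservation of mass is far too weak a cancellation: the cell masses $\int_K(q-\pi_h q)$ have no telescoping structure, only their total vanishes.

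The cancellation that actually gives the superapproximation property lives at the \emph{nodal} level and uses the specific form of the Carstensen weights (equivalently, the $L^2(\partial\Omega)$ self-adjointness of $\pi_h$, which you never invoke). Writing $\Lambda_y(q)=(q,\phi_y)_{\partial\Omega}/(1,\phi_y)_{\partial\Omega}$ and using the partition of unity $\sum_z\phi_z=1$, one has the identity
\[
(q-\pi_h q,\phi_y)_{\partial\Omega}=\sum_z(\phi_y,\phi_z)_{\partial\Omega}\bigl[\Lambda_y(q)-\Lambda_z(q)\bigr],
\]
so that for any $w_h=\sum_y w_y\phi_y\in V_h^\partial$, after symmetrizing in $(y,z)$,
\[
(q-\pi_h q,w_h)_{\partial\Omega}=\frac12\sum_{y,z}(w_y-w_z)\,(\phi_y,\phi_z)_{\partial\Omega}\,\bigl[\Lambda_y(q)-\Lambda_z(q)\bigr].
\]
This is the correct summation by parts: it runs over adjacent node pairs, and every factor is local. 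With $\omega_{yz}=\supp\phi_y\cup\supp\phi_z$ one has $\abs{w_y-w_z}\le c\norm{\nabla w_h}_{L^2(\omega_{yz})}$ by an inverse estimate, $\abs{\Lambda_y(q)-\Lambda_z(q)}\le c\norm{\nabla q}_{L^2(\omega_{yz})}$ by a Poincar\'e inequality on the patch (this is where constant reproduction enters), and $(\phi_y,\phi_z)_{\partial\Omega}\le ch^2$ since the boundary is two-dimensional. Finite overlap of the patches then gives $\abs{(q-\pi_h q,w_h)_{\partial\Omega}}\le ch^2\norm{\nabla q}_{L^2(\partial\Omega)}\norm{\nabla w_h}_{L^2(\partial\Omega)}$; choosing $w_h=\pi_h\phi$ and using the $H^1(\partial\Omega)$-stability of $\pi_h$ closes the $t=1$ case, after which your interpolation step is fine. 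So the bound you assert for the cross term is true, but the proof you sketch for it would not survive being written out; the missing idea is this nodal quasi-orthogonality of the Carstensen weights.
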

\begin{proof}
For the case $t=1$ the proof of the $L^2(\partial\Omega)$ estimate follows as in \cite[Theorem 3.1]{Carstensen:1999}. The negative norm estimate is shown as in \cite[Lemma 4.5]{ReyesMeyerVexler:2008}. The general case of the $L^2(\partial\Omega)$ estimate is then obtained by interpolation. See also \cite[Lemma 2.14]{ApelNicaisePfefferer:2016} for more details.
\end{proof}

The next theorem provides an error estimate for this choice of discretization concept.
\begin{theorem}\label{theorem:q_error_est_p1}
	Let $\oq$ be the solution of \eqref{DirichletCon:eq:reduced_problem} and $\oq_h$ the solution of \eqref{DirichletCon:eq:reduced_problem_h} for the choice $Q_h=V_h^\partial$. Let $0<s<s_\Omega$ with $s_\Omega$ from \eqref{eq:s_Omega} and $u_d \in H^1(\Omega)$. Then, there holds
	\[
	\ltwonormdo{\oq-\oq_h} \le c h^{\half+s}\norm{u_d}_{H^1(\Omega)}
	\]
	with a constant $c>0$ independent of $h$ and $u_d$. If $\Omega$ fulfills $\lambda_\Omega > \frac{3}{2}$ then there holds
	\[
	\ltwonormdo{\oq-\oq_h} \le c h\lh\norm{u_d}_{H^1(\Omega)}.
	\]
\end{theorem}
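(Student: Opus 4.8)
The plan is to reproduce the coercivity argument used for \cref{theorem:q_error_est_var}, the single essential difference being that now $\oq\notin\Qadh$ in general, so the admissibility-preserving quasi-interpolant $\pi_h$ must be inserted wherever $\oq$ was previously used as a discrete test function. Starting from the representation \eqref{eq:ddj_h} of $j''_h$ and its affine linearity in the control, I would write
\[
\alpha \ltwonormdo{\oq-\oq_h}^2 \le j''_h(\oq)(\oq-\oq_h,\oq-\oq_h) = j'_h(\oq)(\oq-\oq_h) - j'_h(\oq_h)(\oq-\oq_h).
\]
Since $\oq_h \in \Qadh \subset \Qad$ is admissible for the continuous problem, the continuous optimality condition \eqref{DirichletCon:eq:opt_sys_gradient} tested with $\oq_h$ yields $j'(\oq)(\oq-\oq_h)\le 0$, hence $j'_h(\oq)(\oq-\oq_h)\le\bigl(j'_h(\oq)-j'(\oq)\bigr)(\oq-\oq_h)$. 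For the discrete part $\oq$ is no longer an eligible test function; instead I would test the discrete optimality condition with $\pi_h\oq \in \Qadh$ (admissibility is guaranteed by the construction of $\pi_h$ since $q_a,q_b$ are constants), giving $j'_h(\oq_h)(\pi_h\oq-\oq_h)\ge 0$ and therefore
\[
-j'_h(\oq_h)(\oq-\oq_h) = -j'_h(\oq_h)(\oq-\pi_h\oq) - j'_h(\oq_h)(\pi_h\oq-\oq_h) \le -j'_h(\oq_h)(\oq-\pi_h\oq).
\]

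Collecting these two inequalities leads to the master estimate
\[
\alpha \ltwonormdo{\oq-\oq_h}^2 \le \bigl(j'_h(\oq)-j'(\oq)\bigr)(\oq-\oq_h) - j'_h(\oq_h)(\oq-\pi_h\oq).
\]
The first term is exactly the quantity estimated in \cref{lemma:dj-djh} (respectively \cref{cor:dj-djh} when $\lambda_\Omega>\frac{3}{2}$), producing the factor $h^{\half+s}$ (resp.\ $h\lh$) times $\norm{u_d}_{H^1(\Omega)}\ltwonormdo{\oq-\oq_h}$. The genuinely new contribution is the consistency term $-j'_h(\oq_h)(\oq-\pi_h\oq)$, which is the price paid for $\oq\notin\Qadh$ and which was absent in the variational case.

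To control this term I would use the adjoint representation $j'_h(\oq_h)(\dq)=(\alpha\oq_h-\partial_n^h\oz_h,\dq)_{\partial\Omega}$ and split it in three pieces,
\[
-j'_h(\oq_h)(\oq-\pi_h\oq) = -j'(\oq)(\oq-\pi_h\oq) + \bigl(j'(\oq)-j'_h(\oq)\bigr)(\oq-\pi_h\oq) + \bigl(j'_h(\oq)-j'_h(\oq_h)\bigr)(\oq-\pi_h\oq).
\]
The smooth leading piece equals $-(\alpha\oq-\partial_n\oz,\oq-\pi_h\oq)_{\partial\Omega}$ and is controlled by the negative-norm superconvergence estimate of \cref{lemma:est_pi_h}, namely $\norm{\oq-\pi_h\oq}_{H^{-t}(\partial\Omega)}\le ch^{2t}\norm{\oq}_{H^t(\partial\Omega)}$ with $t=\half+s$ (resp.\ $t=1$), together with the regularity $\oq,\partial_n\oz\in H^{\half+s}(\partial\Omega)$ (resp.\ $H^1(\partial\Omega)$) from \cref{theorem:W22-s_oz} (resp.\ \cref{DirichletCon:theorem:W22-s_oz_with_s_to_12}); this gives a contribution of order $h^{1+2s}$ (resp.\ $h^2$), which is of higher order. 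The second piece is bounded again by \cref{lemma:dj-djh} combined with the $L^2$ estimate $\ltwonormdo{\oq-\pi_h\oq}\le ch^{\half+s}\norm{\oq}_{H^{\half+s}(\partial\Omega)}$ of \cref{lemma:est_pi_h}, yielding order $h^{1+2s}$. The third piece, via the quadratic structure $\bigl(j'_h(\oq)-j'_h(\oq_h)\bigr)(\dq)=(S_h(\oq-\oq_h),S_h\dq)_{L^2(\Omega)}+\alpha(\oq-\oq_h,\dq)_{\partial\Omega}$ and the uniform stability of $S_h$ (\cref{theorem:state_error_est}), reduces to a term of the form $c\,h^{\half+s}\norm{u_d}_{H^1(\Omega)}\ltwonormdo{\oq-\oq_h}$. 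Absorbing every factor of $\ltwonormdo{\oq-\oq_h}$ into the left-hand side by Young's inequality then yields both stated estimates.

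I expect the main obstacle to be precisely the treatment of the consistency term $-j'_h(\oq_h)(\oq-\pi_h\oq)$: unlike in the variational setting it does not vanish, and recovering the \emph{optimal} rate hinges on exploiting the negative-norm (superconvergence) property of $\pi_h$ rather than its mere $L^2$ approximation, so that the control-discretization error stays of higher order than the $h^{\half+s}$ (resp.\ $h\lh$) error inherited from the state and adjoint discretization through \cref{lemma:dj-djh} and \cref{cor:dj-djh}.
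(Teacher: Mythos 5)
Your proof is correct, and it rests on exactly the same key ingredients as the paper's proof: coercivity of $j''_h$, testing the continuous optimality condition with $\oq_h$ and the discrete one with the admissibility-preserving quasi-interpolant $\pi_h\oq$, the consistency estimate of \cref{lemma:dj-djh} (resp.\ \cref{cor:dj-djh}), and --- crucially --- the negative-norm superconvergence of $\pi_h$ from \cref{lemma:est_pi_h} applied to the term $j'(\oq)(\pi_h\oq-\oq)$, which is what keeps the control-discretization error at the higher order $h^{1+2s}$ (resp.\ $h^2$). The only difference is organizational: the paper first splits $\ltwonormdo{\oq-\oq_h}\le\ltwonormdo{\oq-\pi_h\oq}+\ltwonormdo{\pi_h\oq-\oq_h}$ and applies coercivity to the discrete quantity $\pi_h\oq-\oq_h$, producing three terms $I_1,I_2,I_3$, whereas you apply coercivity directly to $\oq-\oq_h$ and telescope the resulting consistency term $-j'_h(\oq_h)(\oq-\pi_h\oq)$ into three pieces. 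These correspond one-to-one: your third piece (the quadratic structure of $j'_h(\oq)-j'_h(\oq_h)$ with $S_h$-stability and Young's inequality) is the paper's $I_1$, your second piece is the paper's $I_2$, and your first piece $-j'(\oq)(\oq-\pi_h\oq)$ is literally the paper's $I_3$. Neither route buys anything over the other --- same rates, same lemmas, same absorption argument; the paper's version keeps every absorbed quantity equal to the single norm $\ltwonormdo{\pi_h\oq-\oq_h}$, while yours avoids the initial triangle inequality at the cost of carrying $\ltwonormdo{\oq-\oq_h}$ through two Young absorptions. Your handling of the $\lambda_\Omega>\frac{3}{2}$ case (replacing \cref{lemma:dj-djh} by \cref{cor:dj-djh} and \cref{theorem:W22-s_oz} by \cref{DirichletCon:theorem:W22-s_oz_with_s_to_12}, with $t=1$ in \cref{lemma:est_pi_h}) matches the paper's closing remark as well.
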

\begin{proof}
We split the error as
\[
\ltwonormdo{\oq-\oq_h} \le \ltwonormdo{\oq-\pi_h \oq} + \ltwonormdo{\pi_h \oq-\oq_h}.
\]
We start with considering the general case $0<s<s_\Omega$ and comment on the second case later on. The estimate for $\oq-\pi_h \oq$ follows from \cref{lemma:est_pi_h} using the regularity $\oq \in H^{\nhalf+s}(\partial \Omega)$ from \cref{theorem:W22-s_oz} leading to
\[
\ltwonormdo{\oq-\pi_h \oq} \le c h^{\half+s} \norm{\oq}_{H^{\nhalf+s}(\partial \Omega)} \le c h^{\half+s} \norm{u_d}_{H^1(\Omega)}.
\]
As in the proof of \cref{theorem:q_error_est_var} we use the coercivity of $j''_h$ to get
\[
\alpha \ltwonormdo{\pi_h \oq-\oq_h}^2 \le j''_h(\oq)(\pi_h \oq-\oq_h,\pi_h\oq-\oq_h) = j'_h(\pi_h \oq)(\pi_h \oq-\oq_h) - j'_h(\oq_h)(\pi_h\oq-\oq_h).
\]
Using the fact that $\pi_h \oq \in \Qadh$ and $\oq_h \in \Qad$ we can exploit  both (continuous and discrete) optimality conditions
\[
- j'_h(\oq_h)(\pi_h \oq-\oq_h) \le 0 \le - j'(\oq)(\oq-\oq_h).
\]
This results in
\[
\begin{aligned}
\alpha \ltwonormdo{\pi_h \oq-\oq_h}^2 &\le j'_h(\pi_h \oq)(\pi_h \oq-\oq_h) - j'(\oq)(\oq-\oq_h)\\
& = I_1 + I_2 + I_3
\end{aligned}
\]
with
\[
I_1 = j'_h(\pi_h \oq)(\pi_h \oq-\oq_h) - j'_h(\oq)(\pi_h \oq-\oq_h),
\]
\[
I_2 = j'_h(\oq)(\pi_h \oq-\oq_h) - j'(\oq)(\pi_h \oq-\oq_h)
\]
and
\[
I_3 = j'(\oq)(\pi_h \oq-\oq_h)-j'(\oq)(\oq-\oq_h) = j'(\oq)(\pi_h \oq - \oq).
\]
For $I_1$ we get using the primal representation of $j'_h$ from \eqref{eq:dj_h}
\[
I_1 = (S_h(\pi_h \oq-\oq),S_h(\pi_h \oq-\oq_h)) + \alpha(\pi_h \oq-\oq,\pi_h \oq-\oq_h)_{\partial \Omega},
\]
which results by the stability of $S_h:L^2(\partial\Omega)\to L^2(\Omega)$ and Young's inequality in
\[
\abs{I_1} \le c \ltwonormdo{\pi_h \oq-\oq} \ltwonormdo{\pi_h \oq-\oq_h} \le \frac{\alpha}{4} \ltwonormdo{\pi_h \oq-\oq_h}^2 + c \ltwonormdo{\pi_h \oq-\oq}^2.
\]
The term $I_2$ is estimated by \cref{lemma:dj-djh} employing the regularity results from \cref{theorem:W22-s_oz} leading to
\[
\abs{I_2} \le c h^{\half+s} \norm{u_d}_{H^1(\Omega)} \ltwonorm{\pi_h \oq-\oq_h}
\le \frac{\alpha}{4} \ltwonormdo{\pi_h \oq-\oq_h}^2 + c h^{1+2s} \norm{u_d}_{H^1(\Omega)}^2.
\]
For $I_3$ we use the negative norm estimate from \cref{lemma:est_pi_h} and again the regularity results from \cref{theorem:W22-s_oz} leading to
\[
\begin{aligned}
\abs{I_3} &= \abs{(\alpha \oq - \partial_n \oz,\pi_h \oq - \oq)_{\partial \Omega}} \le c h^{1+2s} \norm{\alpha \oq - \partial_n \oz}_{H^{\nhalf+s}(\partial \Omega)} \norm{\oq}_{H^{\nhalf+s}(\partial \Omega)}\\
&\le  c h^{1+2s} \norm{u_d}_{H^1(\Omega)}^2.
\end{aligned}
\]
Putting terms together and absorbing terms in the left-hand side we complete the proof for the general case. The estimate for the second case $\lambda_\Omega > \frac{3}{2}$ follows similarly by using \cref{cor:dj-djh} instead of \cref{lemma:dj-djh} and by using the regularity results from \cref{DirichletCon:theorem:W22-s_oz_with_s_to_12} instead of \cref{theorem:W22-s_oz}.
\end{proof}

\section{Numerical results}\label{sec6}
In this section we provide numerical examples to verify the order of convergence. We  consider
the optimal control problem \eqref{DirichletCon:eq:problem} with the slightly
modified state equation
\[
		\begin{aligned}
	-\Lap u &= f &\quad&\text{in } \Omega,\\
	u &=q &\quad&\text{on } \partial \Omega
\end{aligned}
\]
in the domain $\Om = \Om_\omega\subset\R^3$ given for some $\omega = \omega_\Omega\in[\frac\pi2,\pi)$ as
\[
\Om_\omega=\left((-1,1)^2\cap \Set{(r\cos\varphi,r\sin\varphi)^T |
	r\in(0,\infty),~\varphi\in(0,\omega)}\right)\times(0,1),
\]
see \cref{DirichletCon:fig:Omega3D}. Here, we use cylindrical coordinates $(r,\varphi,x_3)$ of $x\in\R^3$, where $r$ and
$\varphi$ are given as above.  With $\lambda=\lambda_\Omega=\frac\pi\omega$, the optimal state and adjoint state
are chosen as
\[
\begin{aligned}
	\ou(x)&=-\lambda
	r(x)^{\lambda-1}(1-x_1^2)(1-x_2^2)x_3^2(1-x_3)^2\\
	&\quad+2r(x)^\lambda\sin(\lambda\varphi(x))(x_1^2+x_2^2-2)x_3^2(1-x_3)^2,\\
	\oz(x)&=r(x)^\lambda \sin(\lambda\phi(x))(1-x_1^2)(1-x_2^2)x_3^2(1-x_3)^2.
\end{aligned}
\]
The optimal control is chosen as restriction $\oq = \ou\bigr\rvert_{\partial\Om}$. The control bounds $q_a<0<q_b$ are chosen here such that they do not become active. The adjoint state $\oz$ fulfills homogeneous Dirichlet boundary conditions on $\partial \Om_\omega$ and a direct
calculation shows that the optimality system \cref{DirichletCon:eq:opt_sys} is
fulfilled for $\alpha=1$. The right-hand side $f$ and the
desired state $u_d$ are calculated by means of $\ou$ and $\oz$ as $f=-\Lap\ou$ and $u_d=\ou+\Lap\oz$.

\tdplotsetmaincoords{70}{-10}

\begin{figure}
	\centering
	\begin{tikzpicture}[scale=1.9, tdplot_main_coords]
		\draw (0,0,1) -- (1,0,1) -- (1,1,1) -- (0,1,1) -- cycle;
		\draw (0,0,0) -- (1,0,0);
		\draw[dashed] (1,0,0) -- (1,1,0) -- (0,1,0);
		\draw (0,1,0) -- (0,0,0);
		\draw (0,0,0) -- (0,0,1);
		\draw (1,0,0) -- (1,0,1);
		\draw[dashed] (1,1,0) -- (1,1,1);
		\draw (0,1,0) -- (0,1,1);
		\node at (0.5,0.5,0.5) {$\Om_{\frac\pi2}$};
	\end{tikzpicture}\qquad
	\begin{tikzpicture}[scale=1.9, tdplot_main_coords]
		\draw (0,0,1) -- (1,0,1) -- (1,1,1) -- (-0.57735026919,1,1) -- cycle;
		\draw (0,0,0) -- (1,0,0);
		\draw[dashed] (1,0,0) -- (1,1,0) -- (-0.57735026919,1,0);
		\draw (-0.57735026919,1,0) -- (0,0,0);
		\draw (0,0,0) -- (0,0,1);
		\draw (1,0,0) -- (1,0,1);
		\draw[dashed] (1,1,0) -- (1,1,1);
		\draw (-0.57735026919,1,0) -- (-0.57735026919,1,1);
		\node at (0.3558,0.5,0.5) {$\Om_{\frac{2\pi}3}$};
	\end{tikzpicture}\qquad
	\begin{tikzpicture}[scale=1.9, tdplot_main_coords]
		\draw (0,0,1) -- (1,0,1) -- (1,1,1) -- (-1,1,1) -- cycle;
		\draw (0,0,0) -- (1,0,0);
		\draw[dashed] (1,0,0) -- (1,1,0) -- (-1,1,0);
		\draw (-1,1,0) -- (0,0,0);
		\draw (0,0,0) -- (0,0,1);
		\draw (1,0,0) -- (1,0,1);
		\draw[dashed] (1,1,0) -- (1,1,1);
		\draw (-1,1,0) -- (-1,1,1);
		\node at (0.25,0.5,0.5) {$\Om_{\frac{3\pi}4}$};
	\end{tikzpicture}
	\caption{Domain $\Om_\omega$ for
		$\omega\in\set{\frac\pi2,\frac{2\pi}3,\frac{3\pi}4}$}\label{DirichletCon:fig:Omega3D}
\end{figure}
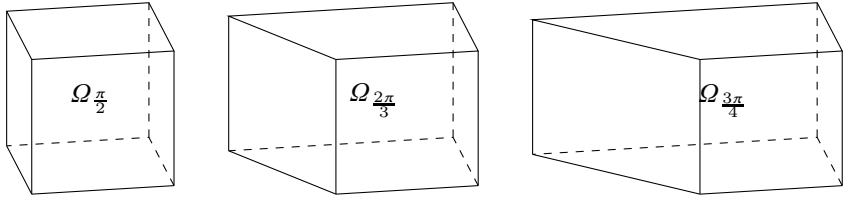

\begin{figure}
	\begin{tikzpicture}
		\begin{loglogaxis} [%
			xlabel={mesh size $h$},
			grid=major,
			legend pos=south east,
			legend cell align=left,
			width=\textwidth,
			height=0.6\textwidth,
			cycle list name=black white,
			]
			
			\addplot table [x=h, y=eq, col sep=semicolon] {errors3D_12_dist.csv};
			\addlegendentry{$\omega=\frac\pi2$}
			\addplot table [x=h, y=eq, col sep=semicolon] {errors3D_23.csv};
			\addlegendentry{$\omega=\frac{2\pi}3$}
			\addplot table [x=h, y=eq, col sep=semicolon] {errors3D_34.csv};
			\addlegendentry{$\omega=\frac{3\pi}4$}
			
			\logLogSlopeTriangle{0.25}{0.15}{0.26}{5/6}{black}{h^{\frac56}}
			\logLogSlopeTriangle{0.25}{0.15}{0.073}{1}{black}{h}
		\end{loglogaxis}
	\end{tikzpicture}
	\caption{Error $\ltwonormdo{\oq-\oq_h}$}\label{DirichletCon:fig:qerror3D}
\end{figure}

In \cref{DirichletCon:fig:qerror3D}, we present the development of the error
$\ltwonormdo{\oq-\oq_h}$ for $\omega\in\set{\frac\pi2,\frac{2\pi}3,\frac{3\pi}4}$ and $h$ tending to
zero. We exactly observe the orders of convergence as predicted by \cref{theorem:q_error_est_var} and \cref{theorem:q_error_est_p1}. Note that for the case with inactive control constraints both discretization concepts described above coincide.
We also note that the calculations for $\omega=\frac\pi2$ were performed on
disturbed meshes to avoid superconvergence effects.

\section{Acknowledgment}\label{sec7} We would like to thank Dr. Dominik Meidner for scientific exchange and for helping us to prepare the numerical examples.

\backmatter

\bibliography{lit}

\end{document}